\def\margin_comment#1{\marginpar{\sffamily{\tiny #1\par}\normalfont}}
\newcommand{\scalemath}[2]{\scalebox{#1}{\begin{math} {#2} \end{math}}}
\newcommand*{\mat}{\mathbf}
\newcommand{\bigzero}{\mbox{\normalfont\Large\bfseries 0}}
\newcommand{\rvline}{\hspace*{-\arraycolsep}\vline\hspace*{-\arraycolsep}}
\tikzset{
  LabelStyle/.style = {minimum width = 2em, 
                        text = red, font = \bfseries },
  VertexStyle/.append style = { inner sep=2pt,
                                font = \Large\bfseries, fill},
  EdgeStyle/.append style = {->, bend left} }
\tikzset{join/.code=\tikzset{after node path={%
\ifx\tikzchainprevious\pgfutil@empty\else(\tikzchainprevious)%
edge[every join]#1(\tikzchaincurrent)\fi}}}
\tikzset{>=stealth',every on chain/.append style={join},
         every join/.style={->}}
\tikzstyle{labeled}=[execute at begin node=$\scriptstyle,
\newtheorem{thm}{Theorem}[section]
\numberwithin{equation}{section} 
\numberwithin{figure}{section} 
\theoremstyle{plain}
\newtheorem*{thm*}{Theorem}
\theoremstyle{definition}
\theoremstyle{plain}
\newtheorem{thmx}{Theorem}
\newtheorem*{defn*}{Definition}
\theoremstyle{plain}
\theoremstyle{plain} 
\theoremstyle{plain}
\newtheorem{prop}[thm]{Proposition} 
\theoremstyle{remark}
\newtheorem{ex}[thm]{Example}
\theoremstyle{remark}
\newtheorem{rem}[thm]{Remark}
\theoremstyle{plain}
\theoremstyle{plain}
\theoremstyle{plain}
\newtheorem{lem}[thm]{Lemma} 
\theoremstyle{definition}
\newtheorem{defn}[thm]{Definition}
\newtheorem*{acknowledgment*}{Addentum}
\theoremstyle{plain}
\newtheorem*{ex*}{Example}
\theoremstyle{plain}
\begin{document}
\title[The  Yang-Baxter equation, quantum computing and quantum entanglement]{The  Yang-Baxter equation, quantum computing and quantum entanglement}
\author{Fabienne Chouraqui}
\begin{abstract}
	
	We present a method to construct infinite families of entangling (and primitive) $2$-qudit gates, and amongst them   entangling  (and primitive) $2$-qudit gates which satisfy the Yang-Baxter equation.   We show that, given  $2$-qudit gates $c$ and $d$, if $c$ or $d$ is  entangling, then their Tracy-Singh product  $c \boxtimes d$  is also entangling and we can provide decomposable states which become entangled  after the application of $c \boxtimes d$.
\end{abstract}
\maketitle
\section*{Introduction}
The Yang-Baxter equation is an equation in mathematical physics and it lies in the  foundation of  the theory of quantum groups. It is a key concept in the construction of knots and links invariants. One of the fundamental problems is to find all the solutions of this equation.  Let $V$ be   a finite-dimensional vector space over the  field $\mathbb{C}$.  The linear operator  $c:V \otimes V \rightarrow V \otimes V$    is a solution of the  Yang-Baxter equation (YBE) if it  satisfies  the equality  $c^{12}c^{23}c^{12}=c^{23}c^{12}c^{23}$ in $V \otimes V \otimes V$, $c$ is also called a braiding operator or an $R$-matrix.  The operator  $c$ is not necessarily unitary and even the problem of  finding only  the unitary solutions of the YBE is a hard one.  A classification of all the unitary solutions of the YBE  in  case the dimension of the vector space $V$ is two is done in  \cite{dye}.  L.H. Kaufman and S.J. Lomonaco Jr  initiated in a series of papers the study of  the connections   between quantum computation, quantum entanglement and  topological entanglement, and in particular the role of unitary braiding operators in quantum computing \cite{kauf-lo1,kauf-lo1',kauf-lo4,kauf-lo3}. 

Quantum computing is a cutting-edge paradigm in the field of information processing that leverages the principles of quantum mechanics to perform computations. Unlike classical computers that rely on bits to represent information as either 0 or 1, quantum computers harness the principles of quantum mechanics to utilize quantum bits or qubits.  Qubits can exist in multiple states simultaneously, thanks to the phenomenon of superposition, allowing quantum computers to explore multiple solutions to a problem in parallel, potentially solving certain computational challenges exponentially faster than classical computers. Furthermore, quantum computers exploit the concept of entanglement.

Quantum entanglement has become  a subject of intense study and experimentation, and its properties have important  applications in quantum information processing, quantum cryptography, and quantum communication. It is a fundamental phenomenon in quantum mechanics where two or more particles become intricately correlated in such a way that the state of one particle instantaneously influences the state of another, regardless of the physical distance separating them. When particles,  such as electrons or photons,  become entangled, the measurement of the state of one particle (such as its spin, polarization, or other quantum properties) is correlated with the state of the other particle, even if they are spatially separated by large distances. Quantum entanglement has profound implications for the field of quantum mechanics. 

Quantum mechanics relies heavily on   mathematical tools to describe the state, evolution, and measurement of quantum systems.  A quantum system is represented by a state vector  (or wavefunction) which belongs to a complex Hilbert space $\mathcal{H}$, and is denoted by $\mid \psi \rangle$ in the Dirac notation.  It  encodes the probabilities of finding a particle in different states, where the probabilities are determined by the coefficients of the state vector. The  evolution is described by the  application of  an unitary operator $A$  of  $\mathcal{H}$  on  a  state vector $\mid \psi \rangle$,  and the result is a new state vector denoted by $A\mid \psi \rangle$.  The act of measurement  fundamentally alters the state of a quantum system. Indeed, upon measurement, the quantum system undergoes a sudden and non-reversible change. This process is  called  the collapse of the wavefunction and is the subject of ongoing debate among physicists. 

In the context of quantum mechanics, an entangling operator is an operator that acts on the quantum states of two or more particles, leading to the creation of entanglement between them. Entangling operators are fundamental tools for building quantum algorithms and performing quantum computations. In this paper, we propose a method to construct infinite families of entangling operators which are also solutions to the YBE. In order to be able to present our results, we need to give some formal definitions. 

A \emph{qudit} is a  state vector in the Hilbert space $\mathbb{C}^d$, where $d \geq 2$. Whenever $d=2$, it is called a \emph{qubit}. A \emph{$n$-qudit} is a state vector in the Hilbert space $(\mathbb{C}^d)^{\otimes n}$ and a \emph{$n$-qubit} is a state vector in the Hilbert space $(\mathbb{C}^2)^{\otimes n}$.  A $n$-qudit $\mid \phi \rangle$ is   \emph{decomposable} if $\mid \phi \rangle\,=\,\mid \phi_1 \rangle\,\otimes\,\mid \phi_2 \rangle\,\otimes\,...\otimes\,\mid \phi_n \rangle$, where 
$\mid \phi_i \rangle\,\in \, \mathbb{C}^d$, for $ 1 \leq i \leq n$.  Otherwise, $\mid \phi \rangle$ is  \emph{entangled}.  A \emph{(quantum)  $n$-qudits gate} is a unitary operator  $L:\,(\mathbb{C}^d)^{\otimes n}\,\rightarrow\,(\mathbb{C}^d)^{\otimes n}$. These gates belong to the unitary group $\operatorname{U}((\mathbb{C}^d)^{\otimes n})$. A sequence $L_1,L_2,...,L_k$ of quantum gates constitutes a \emph{quantum circuit} on $n$-qudits, which output is the product gate  
$L_1\cdot L_2\cdot...\cdot L_k$. In practice, one wants to build circuits out of gates which are local in the sense that they operate on a small number of qudits, typically $1$, $2$, or $3$ \cite{bryl}. 

A $2$-qudit gate $L:\,(\mathbb{C}^d)^{\otimes 2}\,\rightarrow\,(\mathbb{C}^d)^{\otimes 2}$ is \emph{primitive} if $L$ maps decomposable $2$-qudit to decomposable $2$-qudit, otherwise $L$ is  said to be \emph{imprimitive} in  \cite{bryl} or \emph{entangling} in \cite{kauf-lo3}. That is, the gate   $L$ is said to be  entangling, if there exists a decomposable $2$-qudit $\mid \phi \rangle$ such that $L\,\mid \phi \rangle$ is entangled. In \cite{bryl}, J.L. Brylinski and R. Brylinski give a criteria to decide whether a  $2$-qudit  gate  $L$ is primitive or entangling and moreover, they prove that $L$ is entangling if and only if  $L$ is exactly universal, which means  that the collection of all $1$-qudit gates together with $L$ generates the unitary group  $\operatorname{U}((\mathbb{C}^2)^{\otimes n})$.

 The Yang-Baxter equation plays a significant role in quantum computing, particularly in the context of quantum information theory and in the domain of  integrable quantum computation,  a type of quantum circuit model of computation in which two-qubit gates are either the swap gate or non-trivial unitary solutions of the Yang-Baxter equation \cite{integrable-quantum}.
 Recently, the Yang-Baxter equation has been used in the study of quantum error correction   \cite{ybe-quantum-correction}, and in teleportation-based quantum computation \cite{teleportation}.  There is also an increasing interest on the realisation and the implementation of  Yang–Baxter gates and the Yang–Baxter equation on quantum computers. The Yang–Baxter gates from the integrable circuit have been implemented on IBM superconducting-based quantum computers \cite{imp1,imp2}, and  the Yang–Baxter equation has been tested on the optical and NMR systems \cite{test1,test2}. In \cite{implement}, the authors address the question of how to find  optimal  realisations of Yang-Baxter gates. 
 
  Every  unitary solution of  the Yang-Baxter equation, $R:V \otimes V \rightarrow V \otimes V$, where $V$ is a complex vector space of dimension $d$, can be described by $R:\,\mathbb{C}^d\otimes\mathbb{C}^d\,\rightarrow\,\mathbb{C}^d\otimes\mathbb{C}^d$ and $R$  is a $2$-qudit gate.   In \cite{kauf-lo3}, L.H. Kaufman and S.J. Lomonaco Jr  initiate the study of $2$-qubit gates, $R:\,\mathbb{C}^2\otimes\mathbb{C}^2\,\rightarrow\,\mathbb{C}^2\otimes\mathbb{C}^2$, which  satisfy  the Yang-Baxter equation.  In particular, using the classification of unitary solutions from \cite{dye}, they investigate this special class of  $2$-qubit gates and determine which are entangling using the criteria from \cite{bryl}. The question whether   a $2$-qudit gate $R$  that satisfy the Yang-Baxter equation is  primitive  or entangling   has found an interesting application in the domain of  link invariants.  Indeed, in  \cite{entangle-knot}, the authors prove that  if  such   a  $2$-qudit gate $R$  is  primitive (non-entangling), then the  link invariant obtained form $R$,  using the Turaev construction  \cite{turaev}, is trivial.\\

 In this paper,  we  provide a tool to construct infinitely many  entangling  and primitive  $2$-qudit  gates and in particular relying on the results from \cite{bryl} and from \cite{kauf-lo3}, we construct  entangling  and primitive   $2$-qudit gates that  satisfy  the YBE. 
 \begin{thmx} \label{thm0}
	Let  $d\geq 2$ be any  integer.
	Then,  there exists an entangling  $2$-qudit gate  $R:\,\mathbb{C}^d\otimes\mathbb{C}^d\,\rightarrow\,\mathbb{C}^d\otimes\mathbb{C}^d$  and there exists a primitive  $2$-qudit gate  $S:\,\mathbb{C}^d\otimes\mathbb{C}^d\,\rightarrow\,\mathbb{C}^d\otimes\mathbb{C}^d$, where  $R$ and $S$ satisfy  the Yang-Baxter equation.
\end{thmx}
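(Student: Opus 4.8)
The plan is to treat the two required gates separately, handling the primitive solution $S$ in one line and spending the real effort on the entangling solution $R$. For the primitive gate I would take $S=\tau$, the swap operator $\tau(x\otimes y)=y\otimes x$ on $\mathbb{C}^d\otimes\mathbb{C}^d$. It is unitary (it permutes the computational basis), it is primitive since $\tau(\mid\!\phi_1\rangle\otimes\mid\!\phi_2\rangle)=\mid\!\phi_2\rangle\otimes\mid\!\phi_1\rangle$ is again decomposable, and it satisfies the YBE because on $V^{\otimes3}$ both $\tau^{12}\tau^{23}\tau^{12}$ and $\tau^{23}\tau^{12}\tau^{23}$ equal the order-reversing map $e_i\otimes e_j\otimes e_k\mapsto e_k\otimes e_j\otimes e_i$. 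This produces $S$ in every dimension at once.

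For the entangling gate I would generalize the Kauffman--Lomonaco qubit solution of \cite{kauf-lo3}, which can be written as $R=\tfrac{1}{\sqrt2}(I+F)$ for a real signed-permutation $F$ with $F^2=-I$. The plan is to preserve this shape in every dimension: choose a unitary $F=F_d$ on $\mathbb{C}^d\otimes\mathbb{C}^d$ that permutes the computational basis up to phases and satisfies $F_d^2=-I$ (equivalently, $F_d$ anti-Hermitian), and set $R=\tfrac{1}{\sqrt2}(I+F_d)$. Unitarity is then automatic: since $F_d^\dagger=-F_d$ and $F_d^\dagger F_d=-F_d^2=I$, one gets $R^\dagger R=\tfrac12(I+F_d^\dagger+F_d+F_d^\dagger F_d)=I$.

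The YBE reduces to a single identity in $F_d$. Substituting $R=\tfrac{1}{\sqrt2}(I+F_d)$ into $R^{12}R^{23}R^{12}=R^{23}R^{12}R^{23}$, expanding, and using $F_{12}^2=F_{23}^2=-I$ to cancel the scalar and quadratic terms, the equation collapses to
\[
F_{12}F_{23}F_{12}-F_{23}F_{12}F_{23}=F_{23}-F_{12},
\]
where $F_{12}=F_d\otimes I$ and $F_{23}=I\otimes F_d$ act on $V^{\otimes3}$. It then suffices to exhibit, for each $d$, an explicit $F_d$ of the above type satisfying this reduced relation: for even $d$, a real antisymmetric signed permutation extending the Kauffman--Lomonaco flip $e_i\otimes e_j\mapsto\pm\,e_{1-i}\otimes e_{1-j}$; and for odd $d$, where a real $F_d$ with $F_d^2=-I$ cannot exist (its determinant would be $(-1)^{d^2}<0$), a phased variant $F_d=iG_d$ with $G_d$ a real signed-permutation involution, which turns the reduced relation into $G_{12}G_{23}G_{12}-G_{23}G_{12}G_{23}=G_{12}-G_{23}$. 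Once YBE holds, the entangling property follows from the criterion of \cite{bryl}: by construction $F_d$ sends $e_0\otimes e_0$ to a phased basis tensor $e_k\otimes e_\ell$ with $k,\ell\neq0$, so $R(e_0\otimes e_0)=\tfrac{1}{\sqrt2}\bigl(e_0\otimes e_0+F_d(e_0\otimes e_0)\bigr)$ is a Bell-type superposition of two product tensors differing in both slots, hence entangled; since $e_0\otimes e_0$ is decomposable, $R$ is entangling.

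I expect the decisive difficulty to be the explicit construction of $F_d$ together with the verification of the reduced braid identity uniformly in $d$: this is a cubic matrix equation on $V^{\otimes3}$ coupling the flip pattern with the signs or phases, and one must choose the underlying involution and its sign function so that the index orbits on triples close up correctly, separately treating the even and odd parities of $d$. By contrast, checking unitarity and the entangling property is routine, and the swap already settles $S$, so the core of the argument is precisely this verification.
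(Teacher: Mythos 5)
Your treatment of the primitive gate $S$ is correct and in fact cleaner than the paper's: the swap $\tau$ is unitary, satisfies the braid-form YBE (both triple products equal the order-reversal on $V^{\otimes 3}$), and is primitive by Theorem \ref{thm-bryl-criteria} since $\tau=(\mat{I}\otimes \mat{I})P$; this settles $S$ in every dimension at once, whereas the paper builds $S$ from cyclic permutation set-theoretic solutions for primes and closes under the Tracy-Singh product. Your algebra for the entangling half is also sound as far as it goes: the reduction of the YBE for $R=\tfrac{1}{\sqrt2}(I+F_d)$ with $F_d^2=-I$ to $F_{12}F_{23}F_{12}-F_{23}F_{12}F_{23}=F_{23}-F_{12}$ is correct, the unitarity computation is fine, the parity obstruction for real $F_d$ in odd $d$ is right, and the rank-two coefficient argument does show that $\tfrac{1}{\sqrt2}(e_0\otimes e_0+\lambda\, e_k\otimes e_\ell)$ with $k,\ell\neq 0$ is entangled.

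However, there is a genuine gap, and you name it yourself: you never construct $F_d$, and the existence of a signed/phased permutation $F_d$ satisfying the reduced cubic identity for every $d\geq 2$ is precisely the content of the theorem for the entangling half, not a routine verification. The reduced relation couples the underlying involution with the sign (or phase) function on all triples of indices, and it is not clear a priori that a solution with the additional property $F_d(e_0\otimes e_0)=\pm\, e_k\otimes e_\ell$, $k,\ell\neq0$, exists for every $d$ — so as written the proof establishes nothing beyond $d=2$. The paper avoids this entirely by drawing entangling gates from set-theoretic solutions: for each prime $p\geq3$ it takes the square-free involutive solution of Proposition \ref{prop-square-free-3-entangling}, whose permutation matrix is unitary, satisfies the YBE automatically, and is shown entangling by directly ruling out the forms $A\otimes B$ and $(A\otimes B)P$ of Theorem \ref{thm-bryl-criteria} via a block computation; the case $d=2$ is the Kauffman--Lomonaco matrix $c$; and Theorems \ref{theo-tracy-unitary} and \ref{theo-tracy-entangling-also-ent} (Tracy-Singh products preserve YBE, unitarity, and the entangling property) then cover every composite dimension. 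To close your argument you would need either to exhibit $F_d$ and verify the reduced identity uniformly in $d$ (a construction in this spirit exists in \cite{pourkia}, which you would have to adapt and prove), or to replace your ansatz for odd and composite $d$ by a permutation-solution argument of the paper's type.
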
  
In \cite{pourkia-annals-phys}, A. Pourkia and J. Batle  construct an infinite family of unitary and entangling  $2$-qubit gates that satisfy  the Yang-Baxter equation,  using the non-trivial quasi-triangular structure on the group algebra of the cyclic group of order $n\geq 2$ from \cite{majid-book}. A. Pourkia generalises  the construction of  \cite{pourkia-annals-phys} to  construct entangling $2$-qudit gates, since the physical implementation of qudit-based quantum computing systems has several advantages, such as larger storing and processing space or information, a smaller number of qudits required to span the space state, better noise resistance, simplified quantum logic and more  \cite{pourkia}. In \cite{rowell}, generalised solutions of the YBE have been defined to deal with multiple qubit gates, leading to different definitions of entanglement, and in \cite{padma}, the authors construct large families of the generalised YBE and they show that for every multi-qubit  entangled  state in a particular class there is a canonical way to construct the unitary $R$-matrices that solve the  generalised YBE  and generate that state.\\

The steps in the construction of   entangling  $2$-qudit gates are the following.  As a first step, we show that, given two (unitary)  solutions of the  YBE,  their Tracy-Singh product is also a (unitary)  solution of the  YBE. Applying this process iteratively enables  the construction of  infinite families of  solutions of the YBE, and in particular of  unitary solutions or  $2$-qudit gates. More precisely we prove:
\begin{thmx} \label{thm1}
	  Let $c: \mathbb{C}^n \otimes \mathbb{C}^n \rightarrow \mathbb{C}^n \otimes \mathbb{C}^n $ and $d:\mathbb{C}^m \otimes \mathbb{C}^m \rightarrow \mathbb{C}^m \otimes \mathbb{C}^m$ be   solutions of 	the  Yang-Baxter equation, with matrices $c$ and $d$ with respect to their standard  bases respectively.
	Let $c \boxtimes d$ denote their  Tracy-Singh product,  with a   specific block partition. 	Then, the linear operator  $\tilde{c_d}:\mathbb{C}^{nm} \otimes \mathbb{C}^{nm} \rightarrow \mathbb{C}^{nm} \otimes \mathbb{C}^{nm} $ with representing matrix $c\boxtimes d$  is a solution of the Yang-Baxter equation.  Furthermore, if $c$ and $d$  are unitary, then $\tilde{c_d}$ is also an unitary operator.
\end{thmx}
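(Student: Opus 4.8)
The plan is to realize the Tracy-Singh product $c \boxtimes d$ as a permutation conjugate of the ordinary Kronecker product $c \otimes d$, and then to reduce the Yang-Baxter equation for $\tilde{c_d}$ to the separate Yang-Baxter equations for $c$ and $d$. First I would fix the standard bases and identify $\mathbb{C}^{nm}$ with $\mathbb{C}^n \otimes \mathbb{C}^m$, so that $\mathbb{C}^{nm}\otimes\mathbb{C}^{nm} = (\mathbb{C}^n\otimes\mathbb{C}^m)\otimes(\mathbb{C}^n\otimes\mathbb{C}^m)$. The central observation is that, with the block partition prescribed in the statement, there is a permutation matrix $\tau$ (the ``middle swap'' sending $e_i\otimes e_j\otimes f_k\otimes f_l \mapsto e_i\otimes f_k\otimes e_j\otimes f_l$) for which $c\boxtimes d = \tau\,(c\otimes d)\,\tau^{-1}$. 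In other words, the Tracy-Singh product is precisely the Kronecker product read in the reordered basis that separates the two $\mathbb{C}^n$ factors from the two $\mathbb{C}^m$ factors. Verifying this is a direct matching of block entries: the $\big((i,j),(k,l)\big)$ block of $c\boxtimes d$ is $c_{ij}\otimes d_{kl}$, which is exactly what $\tau(c\otimes d)\tau^{-1}$ produces on the corresponding basis vectors. I expect this index bookkeeping to be the main obstacle, since it is where the ``specific block partition'' is pinned down; everything afterwards is formal.

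Next I would pass to the triple tensor product. Writing $(\mathbb{C}^{nm})^{\otimes 3} = (\mathbb{C}^n\otimes\mathbb{C}^m)^{\otimes 3}$ and introducing the analogous shuffle $\Sigma:(\mathbb{C}^n)^{\otimes 3}\otimes(\mathbb{C}^m)^{\otimes 3}\to(\mathbb{C}^{nm})^{\otimes 3}$ that collects the three $\mathbb{C}^n$ factors before the three $\mathbb{C}^m$ factors, the previous step upgrades to
\[
\tilde{c_d}^{12} = \Sigma\,(c^{12}\otimes d^{12})\,\Sigma^{-1}, \qquad \tilde{c_d}^{23} = \Sigma\,(c^{23}\otimes d^{23})\,\Sigma^{-1},
\]
where on the right $c^{ab}$ acts only on the factor $(\mathbb{C}^n)^{\otimes 3}$ and $d^{ab}$ only on $(\mathbb{C}^m)^{\otimes 3}$. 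Because these act on complementary tensor factors, the mixed-product rule $(A\otimes B)(C\otimes D)=(AC)\otimes(BD)$ for the Kronecker product gives
\[
(c^{12}\otimes d^{12})(c^{23}\otimes d^{23})(c^{12}\otimes d^{12}) = (c^{12}c^{23}c^{12})\otimes(d^{12}d^{23}d^{12}),
\]
and likewise for the reversed word $c^{23}c^{12}c^{23}$ and $d^{23}d^{12}d^{23}$. Invoking the Yang-Baxter equations $c^{12}c^{23}c^{12}=c^{23}c^{12}c^{23}$ and $d^{12}d^{23}d^{12}=d^{23}d^{12}d^{23}$ factor by factor shows the two sides coincide; conjugating back by $\Sigma$ then yields $\tilde{c_d}^{12}\tilde{c_d}^{23}\tilde{c_d}^{12} = \tilde{c_d}^{23}\tilde{c_d}^{12}\tilde{c_d}^{23}$, which is the Yang-Baxter equation for $\tilde{c_d}$.

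Finally, for the unitarity claim I would note that $\tau$ and $\Sigma$ are permutation matrices, hence unitary, and that the Kronecker product of two unitary matrices is unitary. Thus if $c$ and $d$ are unitary then $c\otimes d$ is unitary, and $\tilde{c_d} = \tau\,(c\otimes d)\,\tau^{-1}$ is the conjugate of a unitary operator by a unitary operator, hence itself unitary. Taken together, the permutation-conjugation identity of the first paragraph reduces the entire statement to the two hypotheses and to the elementary algebra of the Kronecker product, so the only genuinely nontrivial work is the explicit identification $c\boxtimes d = \tau\,(c\otimes d)\,\tau^{-1}$ and its lift to $(\mathbb{C}^{nm})^{\otimes 3}$.
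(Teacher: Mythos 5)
Your proof is correct, but it takes a genuinely different route from the paper's. The paper proves Theorem \ref{thm1} (as Theorem \ref{theo-tracy-unitary}) by a direct block-matrix computation: it first characterizes the YBE for $c$ through the block identities $c\,(B_{ik}\otimes \mat{I}_n)\,c=\sum_{\ell}(B_{i\ell}\otimes \mat{I}_n)\,c\,(B_{\ell k}\otimes \mat{I}_n)$ (Lemma \ref{lem-c-ybe-with-blocks}), then derives an analogous block criterion for $c\boxtimes d$ with ceiling-function and residue indices (Lemma \ref{lem-tilde-c-condition-for YBE}), and closes the gap using the auxiliary identity $(A\otimes\mat{I}_n)\boxtimes(B\otimes\mat{I}_m)=A\otimes B\otimes\mat{I}_{nm}$ (Lemma \ref{lem-A-I-tracy-B-I}) together with the mixed-product rule for $\boxtimes$ and a reindexing argument matching pairs $(u,v)$ with $\ell=(u-1)m+v$; unitarity is obtained directly from $(c\boxtimes d)(c\boxtimes d)^{*}=cc^{*}\boxtimes dd^{*}=\mat{I}_{n^2m^2}$ rather than by conjugation. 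Your central identity $c\boxtimes d=\tau\,(c\otimes d)\,\tau^{-1}$ is not used in the paper's proof, but it is true and in fact appears later in the paper as Proposition \ref{prop-prop-box-cd}$(ii)$ (with $\tau=\mat{I}_n\otimes K_{mn}\otimes\mat{I}_m$, a consequence of Theorem \ref{thm-tracy-K}), and Section 4 explicitly notes that this matrix realizes the middle flip $F_{23}$ and that $c\boxtimes d=F_{23}(c\otimes d)F_{23}$ is the Conti--Lechner tensor product of $R$-matrices, under which the $R$-matrix property is known to be preserved --- so you have essentially reconstructed that conceptual argument and supplied the lift $\Sigma$ to $(\mathbb{C}^{nm})^{\otimes 3}$ needed to make it a complete proof. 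What each approach buys: yours is shorter and more structural, reduces everything (including unitarity) to formal Kronecker algebra plus one permutation-conjugation identity, and makes transparent \emph{why} the theorem holds; the paper's computation is self-contained at the matrix level, pins down exactly how the canonical square-block partition enters (the paper shows by example that other partitions fail, something your argument explains since only the canonical partition yields the conjugation identity), and produces intermediate block identities reused elsewhere in the paper. One point to make fully rigorous in your write-up is the verification that $\Sigma$ simultaneously intertwines both leg placements, i.e.\ $\tilde{c_d}^{12}=\Sigma(c^{12}\otimes d^{12})\Sigma^{-1}$ \emph{and} $\tilde{c_d}^{23}=\Sigma(c^{23}\otimes d^{23})\Sigma^{-1}$ for the \emph{same} shuffle $\Sigma$; this holds because $\Sigma$ pairs the $i$-th $\mathbb{C}^n$ factor with the $i$-th $\mathbb{C}^m$ factor for each $i$, but it is the one bookkeeping step beyond the two-fold identity and deserves an explicit check.
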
  
The Tracy-Singh product of matrices is a  generalisation of the  tensor or Kronecker product,   called sometimes the block Kronecker product, as  it requires a block partition of the matrices.  We present  a connection between  the Tracy-Singh product  and a categorical construction. We find that   in the same way  the   monoidal category of finite dimensional vector spaces,  $\operatorname{Vec}$, is endowed with the tensor product $\otimes$ as a monoidal product, there exists   a  monoidal category, we call $\mathcal{Diag}$,  that can be endowed with $\boxtimes$ as its monoidal product. The  category  $\mathcal{Diag}$ is a particular subcategory of  $\operatorname{Vec}\,\otimes\,\operatorname{Vec}$, with the crucial property that the representing matrices of morphisms in $\mathcal{Diag}$ admit  a  uniquely defined  partition into blocks all of the same size.

As a second step, we show that  if  $c$ or  $d$ is a   $2$-qudit entangling  gate, then their Tracy-Singh product  $c \boxtimes d$  is also entangling and we can provide decomposable state vectors  which become entangled  after the application of $c \boxtimes d$. That is, we prove:
\begin{thmx}\label{thm2}
	Let $c: \mathbb{C}^n \otimes \mathbb{C}^n \rightarrow \mathbb{C}^n \otimes \mathbb{C}^n $ and $d:\mathbb{C}^m \otimes \mathbb{C}^m \rightarrow \mathbb{C}^m \otimes \mathbb{C}^m$ be  $2$-qudit gates. 	Let $c \boxtimes d$ denote their  Tracy-Singh product,  with a   specific block partition. 	 If  either $c$ or  $d$  is an entangling $2$-qudit gate, then  $c \boxtimes d$ is also an entangling $2$-qudit gate. Also, if  both $c$ or  $d$  are primitive  $2$-qudit gates of a certain kind, then  $c \boxtimes d$ is also a primitive $2$-qudit gate (of the same kind). 
\end{thmx}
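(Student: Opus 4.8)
The plan is to reduce everything to the structural fact underlying Theorem~\ref{thm1}: with the chosen block partition, the Tracy--Singh product is the Kronecker product read through a fixed shuffle isomorphism. Identifying $\mathbb{C}^{nm}=\mathbb{C}^n\otimes\mathbb{C}^m$, let
\[
\phi\colon (\mathbb{C}^n\otimes\mathbb{C}^m)\otimes(\mathbb{C}^n\otimes\mathbb{C}^m)\longrightarrow (\mathbb{C}^n\otimes\mathbb{C}^n)\otimes(\mathbb{C}^m\otimes\mathbb{C}^m)
\]
be the linear isomorphism interchanging the two inner tensor factors; then $c\boxtimes d=\phi^{-1}\,(c\otimes d)\,\phi$. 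I would first record this identity and fix the rank criterion for entanglement: a vector $\sum_{k,l}W_{kl}\,e_k\otimes e_l\in\mathbb{C}^N\otimes\mathbb{C}^N$ is decomposable if and only if its coefficient matrix $W=(W_{kl})$ has rank $1$, and entangled if and only if $\operatorname{rank}(W)\geq 2$.

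For the entangling claim, suppose $c$ is entangling and pick $u,v\in\mathbb{C}^n$ so that $X:=c(u\otimes v)$ has coefficient matrix of rank $\geq 2$. Choose any nonzero $p,q\in\mathbb{C}^m$ and set $Y:=d(p\otimes q)$, which is nonzero since $d$ is unitary, so its coefficient matrix has rank $\geq 1$. I would feed the decomposable vector $(u\otimes p)\otimes(v\otimes q)\in\mathbb{C}^{nm}\otimes\mathbb{C}^{nm}$ through the identity: $\phi$ carries it to $(u\otimes v)\otimes(p\otimes q)$, $c\otimes d$ produces $X\otimes Y$, and $\phi^{-1}$ restores the bipartition. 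A short index computation shows that the coefficient matrix of $(c\boxtimes d)\bigl((u\otimes p)\otimes(v\otimes q)\bigr)$, read in the bipartition $\mathbb{C}^{nm}\otimes\mathbb{C}^{nm}$, is exactly the Kronecker product of the coefficient matrices of $X$ and $Y$. Since $\operatorname{rank}(X\otimes Y)=\operatorname{rank}(X)\cdot\operatorname{rank}(Y)\geq 2$, the output is entangled, so $c\boxtimes d$ is entangling. The case in which $d$ is entangling is symmetric, taking $Y$ of rank $\geq 2$ and $X$ merely nonzero.

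For the primitivity claim, the \emph{certain kind} I would single out is the class of local gates: $c=c_1\otimes c_2$ and $d=d_1\otimes d_2$ with $c_1,c_2\in\operatorname{U}(\mathbb{C}^n)$ and $d_1,d_2\in\operatorname{U}(\mathbb{C}^m)$. Substituting into the identity and tracking the four tensor slots through $\phi$ yields
\[
c\boxtimes d=(c_1\otimes d_1)\otimes(c_2\otimes d_2),
\]
a genuine tensor product of two $1$-qudit gates on $\mathbb{C}^{nm}$. Such a gate sends $a\otimes b\mapsto (c_1\otimes d_1)a\otimes(c_2\otimes d_2)b$ and hence is local, i.e. primitive of the same kind.

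I expect the primitivity statement to be the delicate point, and it is exactly what forces the restriction to a \emph{certain kind}. A vector that is decomposable for the coarse bipartition $\mathbb{C}^{nm}\otimes\mathbb{C}^{nm}$ need not remain decomposable once each factor is refined as $\mathbb{C}^n\otimes\mathbb{C}^m$; consequently one cannot simply invoke primitivity of $c$ and $d$ slot by slot after applying $\phi$, and for a general (e.g.\ swap-type) primitive pair the shuffle can turn such a coarsely decomposable but finely entangled input into an entangled output. The explicit factorization $c\boxtimes d=(c_1\otimes d_1)\otimes(c_2\otimes d_2)$ is what bypasses this difficulty, since an honest tensor product of single-qudit gates preserves decomposability unconditionally. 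The only remaining work is the bookkeeping in the two displayed computations, which is routine once the shuffle identity behind Theorem~\ref{thm1} is in hand.
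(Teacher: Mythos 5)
Your entangling argument is correct, and it reaches the paper's conclusion by a mildly different route. The paper works entirely inside the Tracy--Singh calculus: it first proves $(c\boxtimes d)(\phi\boxtimes\psi)=c\phi\boxtimes d\psi$ from the mixed-product property (Lemma \ref{lem-dist-phipsi}, via Theorem \ref{thm-tracy}(v)), and then shows by an explicit block computation that $\phi\boxtimes\psi$ is decomposable if and only if both $\phi$ and $\psi$ are (Lemma \ref{lem-decomp-iff}), so entangledness of $c\phi$ alone forces $(c\phi)\boxtimes(d\psi)$ to be entangled. You instead conjugate through the shuffle $F_{23}$ (the identity $c\boxtimes d=F_{23}\,(c\otimes d)\,F_{23}$ of Remark \ref{rem-ts-general-matrices}) and invoke the Schmidt-rank criterion together with $\operatorname{rank}(W_X\otimes W_Y)=\operatorname{rank}(W_X)\operatorname{rank}(W_Y)$. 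Your rank computation is the paper's Lemma \ref{lem-decomp-iff} in disguise (the Kronecker product of coefficient matrices has rank $1$ exactly when both factors do), so the two arguments are equivalent in substance; yours is more compact and gives the if-and-only-if for free, while the paper's is self-contained at the matrix level and does not lean on the $F_{23}$ identity, which the paper itself only sketches as ``a technical computation.''

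The primitivity half, however, has a genuine gap. The ``certain kind'' in the theorem (made precise in Theorem \ref{theo-tracy-entangling-also-ent}(iii)) comprises \emph{two} cases, matching the two shapes a primitive gate can take by the Brylinski criterion (Theorem \ref{thm-bryl-criteria}): the tensor type $c=c_1\otimes c_2$, $d=d_1\otimes d_2$, which you prove, and the swap type $c=(c_1\otimes c_2)S_c$, $d=(d_1\otimes d_2)S_d$, which you omit --- and, worse, explicitly flag as the case where the shuffle ``can turn'' a decomposable input into an entangled output. It cannot: the same slot-tracking through $F_{23}$ gives $(c\boxtimes d)(x_1\otimes y_1\otimes x_2\otimes y_2)=c_1(x_2)\otimes d_1(y_2)\otimes c_2(x_1)\otimes d_2(y_1)$, i.e.\ $c\boxtimes d=\bigl((c_1\otimes d_1)\otimes(c_2\otimes d_2)\bigr)P$ with $P$ the swap on $(\mathbb{C}^{nm})^{\otimes 2}$, which is primitive (of swap type) by Theorem \ref{thm-bryl-criteria}. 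What your intuition correctly detects is the \emph{mixed} pairing --- one tensor-type factor and one swap-type factor --- where $c\boxtimes d$ becomes a partial swap that is in general entangling; that is precisely why the theorem demands both gates be of the \emph{same} kind, and why the paper remarks after the proof that primitivity of $c\boxtimes d$ for arbitrary primitive $c$ and $d$ remains unclear. Adding the two-line swap-type computation above closes the gap and recovers the full statement.
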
  
The paper is divided in two parts. Roughly, the first part presents  the  construction of solutions of the YBE  using the Tracy-Singh product,  and the second part presents the topic of quantum computing, the construction of primitive and entangling gates using the Tracy-Singh product. More in detail,  in Section $1$, we give some preliminaries on  solutions of the YBE, in Section $2$, we give some preliminaries on matrix products and their properties, and  in Section $3$,   we   present the properties of a solution of the YBE, describe  the method of construction of large solutions and prove Theorem \ref{thm1}.  In Section $4$, we present some connections between the Tracy-Singh product and other constructions, and in particular a connection with a categorical construction. In Section $5$, we  give some preliminaries on quantum computing and entanglement, in Section $6$, we prove Theorems \ref{thm2} and  \ref{thm0}. 
 The author confirms that the data supporting the findings of this study are available within the article.
 \part{The   Yang-Baxter equation  (YBE) and construction of  an infinite family of new solutions using the Tracy-Singh product}
 \section{Preliminaries on  solutions of the   Yang-Baxter equation  (YBE) }
 We use the terminology from \cite[Ch.VIII]{kassel}.  We refer the reader to  \cite{kassel} for more details.
 \begin{defn}
 	Let $V$ be a vector space over a field $k$. A linear automorphism $c$ of  $V \otimes V$ is said to be an $R$-matrix if it is a solution of the Yang-Baxter equation (YBE)
 	\begin{equation}\label{eqn-ybe}
 	(c \otimes Id_V)(Id_V \otimes c )(c \otimes Id_V)\,=\,(Id_V \otimes c)(c \otimes Id_V)(Id_V\otimes c)
 	\end{equation}
 	that holds in the automorphism group of  $V \otimes V \otimes V$.
 \end{defn}
 It is also written as $c^{12}c^{23}c^{12}=c^{23}c^{12}c^{23}$. Let $\{e_i\}_{i=1}^{i=n}$ be a basis of the finite dimensional vector space $V$.  An automorphism $c$ of   $V \otimes V$ is defined by the family $(c_{ij}^{kl})_{i,j,k,l}$ of scalars determined by $c(e_i\otimes e_j)\,=\,\sum\limits_{k,l} \,c_{ij}^{kl}\,e_k \otimes e_l$.  Then $c$ is an $R$-matrix if and only if  its representing  matrix,  $c$, with respect to  the basis  $\{e_i\otimes e_j\mid 1\leq i,j\leq n\}$,  satisfies
 \begin{equation}\label{eqn-ybe-matrix}
 (c \otimes  \mat{I}_{n})\,(\mat{I}_{n}\otimes  c)\,(c \otimes  \mat{I}_{n})\,=\,(\mat{I}_{n}\otimes  c)\,(c \otimes  \mat{I}_{n})\,(\mat{I}_{n}\otimes  c)
 \end{equation}
 which is equivalent, for all $i,j,k,l,m,n$,  to $\sum\limits_{p,q,y,}\,c_{ij}^{pq}
 c_{qk}^{yn}\,
 c_{py}^{lm}\,=\,
 \sum\limits_{y,q,r}\,
 \,c_{jk}^{qr}\,
 c_{iq}^{ly}\,
 c_{yr}^{mn} $. \\Solving these  non-linear equations  is a highly non-trivial problem. Nevertheless, numerous  solutions of the YBE have been found.  
 \begin{defn}\label{def-iso}
 	Let $c: V \otimes V \rightarrow V \otimes V$ and $c': V' \otimes V' \rightarrow V' \otimes V'$
 	be solutions of Yang-Baxter equation, where $V$ and $V'$ are vector spaces over the same field $k$, with $\operatorname{dim}(V)=\operatorname{dim}(V')=n$. The solutions $c$ and  $c'$ are \emph{isomorphic} if there exists a linear isomorphism $\mu:V \rightarrow V'$ such that 
 	$c'\,(\mu \otimes\mu)\,=\,(\mu \otimes\mu)\,c$.
 \end{defn}
 \begin{ex}\label{ex1-kaufman}(\cite{kauf-lo3}, \cite{dye} and \cite{kassel})
 	Let $c,\,d:V \otimes V \rightarrow V \otimes V$ be  $R$-matrices, with $\operatorname{dim}(V)=2$
 	\[c=	\begin{pmatrix}
 	\frac{1}{\sqrt{2}}& 0 & 0 & 	\frac{1}{\sqrt{2}}\\
 	0 &	\frac{1}{\sqrt{2}} & -	\frac{1}{\sqrt{2}} & 0 \\
 	0 &	\frac{1}{\sqrt{2}} & 	\frac{1}{\sqrt{2}} &0 \\
 	-	\frac{1}{\sqrt{2}} &0 & 0 &	\frac{1}{\sqrt{2}} \\
 	\end{pmatrix}\;\;\;  \textrm{and}\;\;\; 
 	d=\begin{pmatrix}
 	2& 0 & 0 & 	0\\
 	0 &0 & 1 & 0 \\
 	0 &	1 & 1.5&0 \\
 	0&0 & 0 &	2\\
 	\end{pmatrix}\]
 	Here, we have $c(e_1 \otimes e_2)=\,\frac{1}{\sqrt{2}} \,e_1 \otimes e_2\,+\,\frac{1}{\sqrt{2}} \,e_2 \otimes e_1$ and $d(e_1 \otimes e_2)=\,e_2 \otimes e_1$. Note that $c$ is unitary, while $d$ is not. As a convention, we  always consider the  basis   $\{e_i \otimes e_j \,\mid \,1\leq i,j \leq  n \}$ of $V\otimes V$ ordered lexicographically, that is, as an example, for $n=2$, the ordered basis of $V\otimes V$  is 
 	$\{e_1\otimes e_1 ,\,e_1\otimes e_2,\,e_2\otimes e_1,\,e_2\otimes e_2 \}$.
 \end{ex}
 There  is a  way to generate new $R$-matrices from old ones. Indeed,  if $c \in \operatorname{Aut}(V \otimes V)$ is an $R$-matrix, then so are $\lambda c$, $c^{-1}$ and $\tau \circ c \circ \tau$, where $\lambda$ is any non-zero scalar and $\tau: V \otimes V$ is the flip map ($\tau(v_1 \otimes v_2)=v_2\otimes v_1$) \cite[p.168]{kassel}.

 In \cite{drinf}, Drinfeld suggested the study of a particular class of solutions,  derived from the so-called set-theoretic solutions. The study of these solutions was initiated in \cite{etingof}. We refer also to  \cite{gateva_van}  and \cite{jespers_book}. Let $X$ be a non-empty set. Let $r: X \times X \rightarrow X \times X$  be a map and write $r(x,y)=(\sigma_{x}(y),\gamma_{y}(x))$,  where $\sigma_x, \gamma_y:X\to X$ are functions  for all  $x,y \in X$.   The pair $(X,r)$ is  \emph{braided} if $r^{12}r^{23}r^{12}=r^{23}r^{12}r^{23}$, where the map $r^{ii+1}$ means $r$ acting on the $i$-th and $(i+1)$-th components of $X^3$.  In this case, we  call  $(X,r)$  \emph{a set-theoretic solution of the  Yang-Baxter equation or st-YBE }, and whenever $X$ is finite, we  call  $(X,r)$  \emph{a finite st-YBE}.  The pair $(X,r)$ is \emph{non-degenerate} if for every  $x\in X$,  $\sigma_{x}$ and $\gamma_{x}$  are bijective and it   is  \emph{involutive} if $r\circ r = Id_{X^2}$. If $(X,r)$ is a non-degenerate involutive  st-YBE, then $r(x,y)$ can be described as  $r(x,y)=(\sigma_{x}(y),\gamma_{y}(x))=(\sigma_{x}(y),\,\sigma^{-1}_{\sigma_{x}(y)}(x))$.  A st-YBE  $(X,r)$ is \emph{square-free},  if  for every $x \in X$, $r(x,x)=(x,x)$. A st-YBE $(X,r)$ is \emph{trivial} if $\sigma_{x}=\gamma_{x}=Id_X$, for every  $x \in X$. Two  set-theoretic solutions $(X,r)$ and $(X',r')$ are \emph{isomorphic} if there is a bijection $\mu:X \rightarrow X'$ such that $(\mu \times \mu) \circ r=r'\circ (\mu \times \mu)$  \cite{etingof}.   Non-degenerate and involutive st-YBE are  intensively investigated and they give rise to several algebraic structures associated to them \cite{gigel}, \cite{catino3, brace}, \cite{chou_art, chou_godel1,chou_godel2},  \cite{doikou}, \cite{etingof,gateva_van,gateva-phys1,gateva_new,g-vendra1}, \cite{jespers_book}, \cite{lebed1}, \cite{rump_braces,smot}.
 \begin{ex}\label{ex-permut-sol-2}
 	For $X=\{x_1,x_2\}$,  there are exactly two non-degenerate and involutive st-YBE. The first one is called  a trivial solution with $\sigma_1=\sigma_2=\gamma_1=\gamma_2=Id_X$  and the second one is called a permutation solution with  $\sigma_1=\sigma_2=\gamma_1=\gamma_2=(1,2)$.  Their matrices with respect to the ordered standard basis  $\{e_i \otimes e_j \mid 1 \leq i,j \leq 2\}$ are respectively 	$c_{2,1}=\begin{pmatrix}
 	1& 0 & 0 & 	0\\
 	0 &0 & 1 & 0 \\
 	0 &	1 & 0&0 \\
 	0&0 & 0 &	1\\
 	\end{pmatrix}\;\;\; \textrm{and}\;\;\; 	c_{2,2}=\begin{pmatrix}
 	0& 0 & 0 & 	1\\
 	0 &1 & 0 & 0 \\
 	0 &	0 & 1&0 \\
 	1&0 & 0 &	0\\
 	\end{pmatrix}$.
 \end{ex}
 \begin{defn}
 	Let  $(X,r)$ be a non-degenerate  involutive st-YBE. A set $Y \subset X$ is  \emph{invariant} if  $r(Y \times Y)\subset Y \times Y$. An invariant  subset $Y \subset X$  is  \emph{non-degenerate} if  $(Y,r\mid_{Y^2})$ is a non-degenerate involutive st-YBE. A st-YBE.  $(X,r)$  is \emph{decomposable} if it is a union of two non-empty disjoint non-degenerate invariant subsets. Otherwise, it is  \emph{indecomposable}.
 \end{defn}
 A very simple class of  non-degenerate involutive st-YBE is the class of  \emph{permutation solutions}. These solutions have the form $r(x,y)=(\sigma(y),\sigma^{-1}(x))$,  where the bijections  $\sigma_x: X\to X$  are  all equal and equal to $\sigma$, the   bijections  $\gamma_x: X\to X$  are  all equal and equal to $\sigma^{-1}$. If $\sigma$ is a cyclic permutation, $(X,r)$ is a \emph{cyclic permutation solution}. A permutation solution is indecomposable if and only if it is cyclic \cite[p.184]{etingof}.
 \begin{prop}\cite{etingof2}\label{prop-indecomp-prime}
 	For every prime $p$, there exists a unique indecomposable non-degenerate involutive st-YBE (up to iso), the cyclic permutation solution with $\sigma=(1,2,...,p)$.
 \end{prop}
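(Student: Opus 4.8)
The plan is to prove existence by exhibiting the solution and to prove uniqueness by reducing the problem to the theory of transitive permutation groups of prime degree. For existence, take $X=\{x_1,\dots,x_p\}$ with $\sigma_x=\sigma=(1,2,\dots,p)$ and $\gamma_x=\sigma^{-1}$ for every $x$; a direct check as in Example \ref{ex-permut-sol-2} shows that this is a non-degenerate involutive st-YBE, since $r^2(x,y)=(\sigma\sigma^{-1}(x),\sigma^{-1}\sigma(y))=(x,y)$. It is indecomposable: any invariant $Y$ satisfies $\sigma(Y)\subseteq Y$, hence is $\langle\sigma\rangle$-invariant, and since $\langle\sigma\rangle$ is transitive this forces $Y=\varnothing$ or $Y=X$.

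For uniqueness, let $(X,r)$ be any indecomposable non-degenerate involutive st-YBE with $|X|=p$, and consider the permutation group $\mathcal{G}=\langle\sigma_x:x\in X\rangle\le\operatorname{Sym}(X)$. First I would record that each orbit of $\mathcal{G}$ is a non-degenerate invariant subset: using $\gamma_y(x)=\sigma^{-1}_{\sigma_x(y)}(x)$, every orbit is closed under all $\sigma_x$, $\sigma_x^{-1}$, and hence under $r$, while the restricted maps are bijective. Therefore indecomposability forces $\mathcal{G}$ to be transitive, the group-theoretic reformulation already implicit in \cite{etingof}. Since $\deg\mathcal{G}=p$ is prime, $\mathcal{G}$ is automatically primitive (a nontrivial block would have size strictly between $1$ and $p$ and dividing $p$), and by Cauchy's theorem it contains an element of order $p$, necessarily a $p$-cycle.

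The key device is the retraction congruence $x\sim y\Longleftrightarrow\sigma_x=\sigma_y$. Its classes form a system of imprimitivity for $\mathcal{G}$, so by primitivity only two cases occur: all the $\sigma_x$ coincide ($k:=|X/\!\sim|=1$), or they are pairwise distinct ($k=p$). In the first case $\sigma_x=\sigma$ for all $x$, and involutivity gives $\gamma_y(x)=\sigma^{-1}_{\sigma_x(y)}(x)=\sigma^{-1}(x)$, so $(X,r)$ is a permutation solution $r(x,y)=(\sigma(y),\sigma^{-1}(x))$; transitivity of $\langle\sigma\rangle$ then forces $\sigma$ to be a single $p$-cycle. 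Since any two $p$-cycles are conjugate in $\operatorname{Sym}(X)$ and such a conjugation is precisely an isomorphism of solutions in the sense of the definition preceding Example \ref{ex-permut-sol-2}, this yields exactly the cyclic permutation solution with $\sigma=(1,2,\dots,p)$, up to isomorphism.

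The main obstacle is to rule out the case $k=p$, that is, to show that no indecomposable solution of prime order is irretractable. Here I would invoke the solvability of $\mathcal{G}$: the structure group $\langle X\mid xy=\sigma_x(y)\gamma_y(x)\rangle$ of a non-degenerate involutive solution is poly-(infinite cyclic), hence solvable, and $\mathcal{G}$ is a homomorphic image of it via $x\mapsto\sigma_x$. A solvable transitive group of prime degree $p$ is conjugate into the one-dimensional affine group $\operatorname{AGL}(1,p)$, so after identifying $X$ with $\mathbb{Z}/p$ we may write $\sigma_x(y)=a(x)y+b(x)$ with $a(x)\in(\mathbb{Z}/p)^{\times}$ and $b(x)\in\mathbb{Z}/p$. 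Substituting this affine form into the compatibility relation $\sigma_x\,\sigma_{\sigma_x^{-1}(y)}=\sigma_y\,\sigma_{\sigma_y^{-1}(x)}$ equivalent to the braid condition, and comparing linear and translation parts, one obtains functional equations for $a$ and $b$; the plan is to show that these admit no solution in which $x\mapsto(a(x),b(x))$ is injective, so that $k=p$ cannot occur and only $k=1$ survives. Verifying that this affine cocycle system has no injective solution is the technical heart of the argument, and I expect it to require the most care.
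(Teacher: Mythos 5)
First, note that the paper does not prove this proposition at all: it is imported verbatim from \cite{etingof2} (Etingof--Guralnick--Soloviev), so what you are really attempting is a reconstruction of their theorem. Your outline tracks their strategy faithfully. The existence half is complete and correct: involutivity of $r(x,y)=(\sigma(y),\sigma^{-1}(x))$ is immediate, and invariant subsets are $\langle\sigma\rangle$-stable, so transitivity of the $p$-cycle gives indecomposability. The reduction of uniqueness to permutation group theory is also sound: orbits of $\mathcal{G}=\langle\sigma_x\rangle$ are invariant non-degenerate subsets, so indecomposability forces transitivity; a transitive group of prime degree is primitive since block sizes divide the degree; and the dichotomy $k\in\{1,p\}$ for the retraction classes, together with the complete treatment of the $k=1$ case (permutation solution, $\sigma$ a $p$-cycle, all $p$-cycles conjugate and conjugation is an isomorphism of solutions), is correct. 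One small debt you incur silently: that the classes of $x\sim y\Leftrightarrow\sigma_x=\sigma_y$ form a block system requires the retraction lemma of \cite{etingof} (if $\sigma_x=\sigma_y$ then $\sigma_{\sigma_z(x)}=\sigma_{\sigma_z(y)}$); you should cite or prove it, since it is not formal.

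The genuine gap is the case $k=p$. Ruling out irretractable indecomposable solutions of prime order is not a loose end of the argument --- it \emph{is} the argument, and it is exactly where \cite{etingof2} spend most of their effort. Your reductions up to that point (solvability of $\mathcal{G}$ via the structure group, Galois's theorem placing $\mathcal{G}$ inside $\operatorname{AGL}(1,p)$, the affine form $\sigma_x(y)=a(x)y+b(x)$) are the right setup, but you then only state a plan: substituting into the cocycle condition $\sigma_x\sigma_{\sigma_x^{-1}(y)}=\sigma_y\sigma_{\sigma_y^{-1}(x)}$ yields the system $a(x)\,a(\sigma_x^{-1}(y))=a(y)\,a(\sigma_y^{-1}(x))$ and $a(x)\,b(\sigma_x^{-1}(y))+b(x)=a(y)\,b(\sigma_y^{-1}(x))+b(y)$, and you explicitly defer showing that it admits no injective assignment $x\mapsto(a(x),b(x))$, calling it ``the technical heart.'' Nothing in your earlier steps makes this verification routine, and without it uniqueness is simply unproved: a priori an irretractable indecomposable solution on $p$ elements could exist, and your proposal contains no mechanism that excludes it. So the submission is an accurate and well-organized roadmap of the cited proof, but it is incomplete at the one step that carries the mathematical weight.
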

 \begin{ex}\label{ex-square-free-sol-3}
 	For $X=\{x_1,x_2,x_3\}$,  there are five non-degenerate and involutive st-YBE. From the non-degeneracy and the  involutivity of a  solution, $r$ is a permutation matrix for a permutation which is the product of disjoint transpositions. Indeed, for every $1 \leq i,j,k,l \leq3$,  if $r(e_i\otimes e_j)=e_k\otimes e_l$, then $r(e_k\otimes e_l)=e_i\otimes e_j$. 
 	\begin{enumerate}[(i)]
 		\item  One of these  solutions is the cyclic permutation solution defined by  $\sigma=(1,2,3)$ and $\gamma=\sigma^{-1}=(1,3,2)$. Its matrix  $s$ is a square matrix of size 9, which satisfies $s(e_1 \otimes e_1)=e_2 \otimes e_3,\,s(e_1 \otimes e_2)=e_3 \otimes e_3, \,...,\,s(e_3 \otimes e_1)=e_2 \otimes e_2$, and  $e_1 \otimes e_3, \, e_2 \otimes e_1,\, e_3\otimes e_2$ are fixed. One can describe the square matrix  $s$, with the following partition into square blocks of size 3, where $E_{ij}$ denotes the elementary matrix with $1$ at position $ij$, and $0$ elsewhere, as in Equation (\ref{eqn-s-cyclic-3}): 
 		\begin{equation}\label{eqn-s-cyclic-3}
 		s=\;\left(\begin{array}{c:c:c}
 		E_{33}
 		& E_{13}
 		& E_{23} \\
 		\hdashline
 		E_{31}
 		& E_{11}
 		& E_{21} \\
 		\hdashline
 		E_{32}	
 		& E_{12}
 		& E_{22} \\
 		\end{array}\right)	\end{equation}
 		\item Another solution  is the square-free solution defined by  $\sigma_i=\gamma_i=Id$, $i=1,2$ and   $\sigma_3=\gamma_3=(1,2)$.  Furthermore, $r(e_j \otimes e_j)=e_j\otimes e_j$, $1\leq j\leq 3$, since it is square-free.  The matrix $r$ is  described in Equation (\ref{eqn-r-square-free}):
 		\begin{equation}\label{eqn-r-square-free}
 		r=\;\;\;\left(\begin{array}{c:c|c}
 		E_{11}
 		& E_{21}
 		& E_{32} \\
 		\hdashline
 		E_{12}
 		& E_{22}
 		& E_{31} \\
 		\hline
 		E_{23}	
 		& E_{13}
 		& E_{33} \\
 		\end{array}\right)
 		\end{equation}
 	\end{enumerate}
 \end{ex}
\section{Preliminaries on some products of matrices and their properties} 
We refer to \cite{hyland},   \cite{liu}, \cite{commut}, \cite{tracy}, \cite{tracy-jina}, \cite{zhour}  for more details.
Let $A=(a_{ij})$ be a matrix  of size $m \times n$ and $B=(b_{kl})$ of size $p \times q$. Let $A=(A_{ij})$ be partitioned with $A_{ij}$ of size $m_i \times n_j$ as the $ij$-th block submatrix and let $B=(B_{kl})$ be partitioned with $B_{kl}$ of size $p_k \times q_l$ as the $kl$-th block submatrix ($\sum m_i=m,\,\sum n_j=n,\,\sum p_k=p,\,\sum q_l=q$). The Kronecker (or tensor) product and the Tracy-Singh (or block Kronecker)  product are defined as follows:
\begin{enumerate}
	\item  The Kronecker (or tensor) product:
	\begin{equation*}
A\,\otimes \, B\,=\,(a_{ij}B)_{ij}
	\end{equation*}
	The matrix $A\,\otimes \, B$ is of size $mp \times nq$ and the block $a_{ij}B$ is size $p \times q$.
\item  The Tracy-Singh (or block Kronecker)  product:
	\begin{equation*}
	A\,\boxtimes \, B\,=\,((A_{ij} \otimes B_{kl})_{kl})_{ij}
	\end{equation*}
	The matrix $A\,\boxtimes \, B$ is of size $mp \times nq$ and the block $A_{ij} \otimes B_{kl}$ is size $m_ip \times n_jq$.
	
	The matrix $A\,\star \, B$ is of size $(\sum m_ip_i )\times (\sum n_jq_j )$ and the block $A_{ij} \otimes B_{ij}$ is size $m_ip_i\times n_jq_j$.
\end{enumerate}
For non-partitioned matrices,  $A\,\boxtimes \, B\,=\, A\,\otimes \, B$.
\begin{ex}\label{ex-tracy-c-d}
	Let $c$ and $d$ the square matrices of size $4$ from Example \ref{ex1-kaufman}, with a partition into $4$ square blocks of size 2. Let $B_{ij}$, and  $B'_{ij}$, $1 \leq i,j \leq 2$, denote the blocks of $c$ and $d$ respectively (see Example \ref{ex2-kaufman}).   Then $c\boxtimes d$ is a square matrix of size 16, which can be partitioned into 16 square blocks of size 4. The square block of size 4 at position  $22$ is obtained from $B_{11}\otimes B'_{22}$ and the block  at position  $24$ is obtained from $B_{12}\otimes B'_{22}$:  \[B_{11}\otimes B'_{22}\;=\;\begin{pmatrix}
		\frac{1.5}{\sqrt{2}}& 0 & 0 & 	0\\
		0 &\sqrt{2} & 0 & 0 \\
		0 &	0 & 	\frac{1.5}{\sqrt{2}}&0 \\
		0&0 & 0 &		\sqrt{2}\\
	\end{pmatrix}\;\;\; ;\;\;\; 
	 B_{12}\otimes B'_{22}\;=\;\begin{pmatrix}
0& 0 & \frac{1.5}{\sqrt{2}}& 	0\\
	0 &	0& 0 & \sqrt{2}  \\
	-\frac{1.5}{\sqrt{2}} &	0& 	0&0 \\
	0&-\sqrt{2} & 0 &		0\\
	\end{pmatrix}\]
\end{ex}
In the following Theorems, we list  important properties of the Tracy-Singh product.
\begin{thm}\cite{tracy}\label{thm-tracy}
		Let $A$, $B$, $C$, and $D$  be matrices. Then
\begin{enumerate}[(i)]
	\item $A \boxtimes B$ and $B \boxtimes A$ exist for any  matrices $A$ and $B$.
	\item $A \boxtimes B \neq B \boxtimes A$ in general.
	\item  $(A \boxtimes B)\,\boxtimes C= \,A\boxtimes \,(B \boxtimes C)$.
		\item $(A+ B)\, \boxtimes\,(C +D)=\, A \boxtimes C+A \boxtimes D +B \boxtimes C+B \boxtimes D$,  if $A+B$ and $C+D$ exist.
	\item $(A \boxtimes B)\,(C \boxtimes D)=\, AC \boxtimes BD$,  if $AC$ and $BD$ exist.
	\item $(cA) \boxtimes B\,=\,c (A\boxtimes B \,=\,A \boxtimes (cB)$.
	\item  $(A \boxtimes B)^{-1}\,=  A^{-1} \boxtimes B^{-1}$, if $A$ and $B$ are invertible.
		\item  $(A \boxtimes B)^{t}\,=  A^{t} \boxtimes B^{t}$.
		\item 	$\mat{I}_{n} \boxtimes \mat{I}_{m}\,=\,\mat{I}_{nm}$ for identity partitioned matrices.
\end{enumerate}
\end{thm}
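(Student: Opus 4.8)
The plan is to reduce every assertion to the corresponding well-known property of the ordinary Kronecker product, applied at the level of blocks. The organising observation is that, by the very definition $A \boxtimes B = ((A_{ij} \otimes B_{kl})_{kl})_{ij}$, the Tracy-Singh product is a doubly-indexed block matrix whose block in row-position $(i,k)$ and column-position $(j,l)$ is exactly $A_{ij} \otimes B_{kl}$. Once this indexing convention is fixed, each item becomes a blockwise computation in which all the genuine algebra is carried by $\otimes$. So first I would record, as standing notation, that $A = (A_{ij})$ is partitioned into blocks of size $m_i \times n_j$ and $B = (B_{kl})$ into blocks of size $p_k \times q_l$, and then treat the items in an order that exposes their dependencies.

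Items (i), (vi) and (ii) are immediate: the Kronecker product of any two matrices exists and is bilinear in each factor, so every block $A_{ij} \otimes B_{kl}$ is defined and scalars pull through, giving (i) and (vi); for (ii) it suffices to exhibit one pair $A,B$ with $A \boxtimes B \neq B \boxtimes A$, which already fails for $1 \times 1$ blocks since $\otimes$ itself is non-commutative. For (iv) I would use that $A+B$ and $C+D$ existing forces the summands in each pair to share both shape and partition, so $(A+B)_{ij} = A_{ij} + B_{ij}$ and $(C+D)_{kl} = C_{kl} + D_{kl}$; expanding the block $(A_{ij}+B_{ij}) \otimes (C_{kl}+D_{kl})$ by bilinearity of $\otimes$ and reassembling yields the four-term sum. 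Items (viii) and (ix) are likewise blockwise: transposition sends the $(i,k),(j,l)$ block $A_{ij} \otimes B_{kl}$ to the $(j,l),(i,k)$ block $(A_{ij} \otimes B_{kl})^{t} = A_{ij}^{t} \otimes B_{kl}^{t} = (A^{t})_{ji} \otimes (B^{t})_{lk}$, which is precisely the matching block of $A^{t} \boxtimes B^{t}$; and for identity partitions the blocks satisfy $(\mat{I}_{n})_{ij} = \delta_{ij}\mat{I}_{n_i}$, so the only surviving blocks of $\mat{I}_{n} \boxtimes \mat{I}_{m}$ are $\mat{I}_{n_i} \otimes \mat{I}_{m_k} = \mat{I}_{n_i m_k}$ on the diagonal, assembling to $\mat{I}_{nm}$.

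The heart of the argument is the mixed-product rule (v), from which (vii) follows formally. Assuming the column partition of $A$ matches the row partition of $C$ and the column partition of $B$ matches the row partition of $D$ (so that $A \boxtimes B$ and $C \boxtimes D$ are block-conformable), I would compute the $(i,k),(s,t)$ block of $(A \boxtimes B)(C \boxtimes D)$ by block matrix multiplication as
\[
\sum_{j,l} (A_{ij} \otimes B_{kl})(C_{js} \otimes D_{lt}) = \sum_{j,l} (A_{ij}C_{js}) \otimes (B_{kl}D_{lt}),
\]
the equality being the Kronecker mixed-product property applied block by block. Bilinearity of $\otimes$ then splits the double sum, $\sum_{j,l} (A_{ij}C_{js}) \otimes (B_{kl}D_{lt}) = \bigl(\sum_j A_{ij}C_{js}\bigr) \otimes \bigl(\sum_l B_{kl}D_{lt}\bigr) = (AC)_{is} \otimes (BD)_{kt}$, which is exactly the $(i,k),(s,t)$ block of $AC \boxtimes BD$, proving (v). Taking $C = A^{-1}$, $D = B^{-1}$ with the induced partitions and invoking (ix) gives $(A \boxtimes B)(A^{-1} \boxtimes B^{-1}) = \mat{I} \boxtimes \mat{I} = \mat{I}$, hence (vii). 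Associativity (iii) is proved the same way, by checking that the corresponding triple-indexed block equals $(A_{ij} \otimes B_{kl}) \otimes C_{st}$ under one bracketing and $A_{ij} \otimes (B_{kl} \otimes C_{st})$ under the other, which agree by associativity of $\otimes$.

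I expect the main obstacle to be bookkeeping rather than conceptual: keeping the four running block indices straight and pinning down the exact compatibility conditions on the partitions needed for (iii), (v) and (vii) so that block multiplication is legitimate. The point worth stating carefully is that these items are conditional — (v) and (vii) require the partitions of the factors to be conformable, not merely the matrices themselves — and one must verify that the product inherits a well-defined partition, so that the right-hand sides $AC \boxtimes BD$ and $A^{-1} \boxtimes B^{-1}$ are unambiguously partitioned.
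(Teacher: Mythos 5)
Your proof is correct, but note that there is nothing in the paper to compare it against: Theorem \ref{thm-tracy} is stated by citation to the Tracy--Singh literature and is given no proof in this paper. Your blockwise reduction --- identifying the block of $A\boxtimes B$ in row-position $(i,k)$, column-position $(j,l)$ as $A_{ij}\otimes B_{kl}$ and then transporting each Kronecker identity block by block --- is precisely the standard argument in the cited sources, and all nine items check out: (i), (ii), (iv), (vi), (viii), (ix) are immediate blockwise translations; your computation for (v), $\sum_{j,l}(A_{ij}\otimes B_{kl})(C_{js}\otimes D_{lt})=\bigl(\sum_j A_{ij}C_{js}\bigr)\otimes\bigl(\sum_l B_{kl}D_{lt}\bigr)=(AC)_{is}\otimes(BD)_{kt}$, is valid because the summand factors with $j$ and $l$ appearing in separate tensor factors; and (vii) follows formally from (v) and (ix) as you say. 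Two points you flag deserve to stay explicit, since the bare statement glosses over them: associativity in (iii) holds with respect to the partition of $A\boxtimes B$ \emph{induced} by those of $A$ and $B$ (composite block indices ordered lexicographically, so both bracketings place $A_{ij}\otimes B_{kl}\otimes C_{st}$ in the same position), and (v), (vii) require conformability of the \emph{partitions}, not merely of the matrix dimensions --- in (vii) one must equip $A^{-1}$, $B^{-1}$ with the partitions transposed-conformable to those of $A$, $B$ so that (ix) applies with diagonally partitioned identities. In this paper's applications these conditions are automatic, since only the canonical partition into equal square blocks as in Equation (\ref{eqn-c-blocks}) is ever used, for which all the compatibility requirements hold.
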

Note that for the Kronecker product of matrices, no partition into blocks is needed. \\
Yet, if 
$A=
\begin{pmatrix}
B_{11}

& \rvline & B_{12}
& \rvline &... &\rvline & B_{1s} \\

\hline
...

& \rvline & ...
& \rvline &... &\rvline & ... \\
\hline

B_{n1}

& \rvline & B_{n2}
& \rvline &... &\rvline & B_{ns} \\
\end{pmatrix}$, then $A\otimes B= \begin{pmatrix}
B_{11} \otimes B

& \rvline & B_{12}\otimes B

& \rvline &... &\rvline & B_{1s}\otimes B
 \\
\hline
...

& \rvline & ...
& \rvline &... &\rvline & ... \\
\hline

B_{n1}\otimes B

& \rvline & B_{n2}\otimes B

& \rvline &... &\rvline & B_{ns} \otimes B
\\
\end{pmatrix}$. 

In matrix theory, the commutation matrix is used for transforming the vectorized form of a matrix into the vectorized form of its transpose. 
\begin{defn}\cite{commut} \label{defn-commut}
	The \emph{commutation matrix} $K_{mn}$  is the  matrix defined by:
	\begin{equation*}
	K_{mn}=\, \sum\limits_{i=1}^{i=m}\,\sum\limits_{j=1}^{j=n}E_{ij}\otimes E^t_{ij}
	\end{equation*}
	where $E_{ij}$ is a matrix of size $ m \times n$ with a $1$ in its $ij$-th position and zeroes elsewhere.\\
	In words, $K_{mn}$  is the square matrix of size $mn$, partitioned into $ mn$ blocks of size $ n \times m$ such that the $ij$-th block has a 1 in its $ ji$-th position and $0$  elsewhere, with  $K_{nm}\,=\,	K_{mn}^{-1}$.
\end{defn}
For example (see \cite[p383]{commut}), $K_{23}=\,\,\left(\begin{array}{c:c|c}
E_{11}
& E_{21}
& E_{31} \\
\hdashline
E_{12}
& E_{22}
& E_{32} \\
\end{array}\right)$, where $E_{ij}$ are of size $3 \times 2$.
\begin{thm}\cite{tracy-jina}, \cite{commut}, \cite{neud}\label{thm-tracy-K}
	Let $A$ be of size $n \times s$ and $B$ of size $m \times t$.
	Let $K_{mn}$ denote the  commutation matrix of size $mn$ as in Definition \ref{defn-commut}.  Then
	\begin{enumerate}[(i)]
\item  $B\otimes A\,=\,K_{mn}(A\otimes B)K_{st}$.
\item If  $A$ has a block partition into blocks $A_{ij}$, $1\leq i \leq p$, $1 \leq j \leq q$,  all of size $n'\times s'$,  and $B$ has a block partition into blocks  $B_{kl} $, $1 \leq k\leq u$,  $1 \leq l\leq v$, all of size $m'\times t'$: 
\begin{itemize}
\item $A\boxtimes B\,=\,  (\mat{I}_{p}\,\otimes K_{un'} \,\otimes \mat{I}_{m'})\,	\cdot\, (A\otimes B)\, \cdot\, (\mat{I}_{q}\,\otimes K_{s'v}\,\otimes  \mat{I}_{t'})$.
\item $B\boxtimes A\,=\, 	P \cdot\, (A\boxtimes B)\,\cdot\,Q$.
\end{itemize}
where $P$ and $Q$ are the following permutation matrices:
 \begin{gather*}
P=\, (\mat{I}_{u}\,\otimes K_{pm'} \,\otimes \mat{I}_{n'})\,	\cdot\, K_{mn}\,\cdot\, (\mat{I}_{p}\,\otimes K_{n'u} \,\otimes \mat{I}_{m'})\\
Q=\, (\mat{I}_{q}\,\otimes K_{vs'} \,\otimes \mat{I}_{t'})\,	\cdot\, K_{st}\,\cdot\, (\mat{I}_{v}\,\otimes K_{t'q} \,\otimes \mat{I}_{s'})
\end{gather*}
\end{enumerate}
\end{thm}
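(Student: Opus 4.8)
The plan is to reduce the whole statement to one structural fact: the commutation matrix $K_{mn}$ \emph{is} the flip (swap) operator on the tensor product. So first I would record, straight from Definition \ref{defn-commut}, that
$K_{mn}(e_a \otimes e_b) = e_b \otimes e_a$ for standard basis vectors $e_a \in \mathbb{C}^n$, $e_b \in \mathbb{C}^m$, where $K_{mn}: \mathbb{C}^n \otimes \mathbb{C}^m \to \mathbb{C}^m \otimes \mathbb{C}^n$. This is a one-line check: writing $K_{mn} = \sum_{i,j} E_{ij} \otimes E_{ij}^t$ with $E_{ij}$ of size $m \times n$, we have $E_{ij}e_a = \delta_{ja}e_i$ and $E_{ij}^t e_b = \delta_{ib}e_j$, so only the term $(i,j)=(b,a)$ survives and returns $e_b \otimes e_a$. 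In particular $K_{mn}$ is a permutation matrix with $K_{mn}^{-1} = K_{nm}$.

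Part (i) is then immediate. Since Kronecker products act on decomposable tensors by $(A \otimes B)(u \otimes v) = Au \otimes Bv$, it suffices to evaluate both sides of $B \otimes A = K_{mn}(A \otimes B)K_{st}$ on a basis vector $e_c \otimes e_d$ of $\mathbb{C}^t \otimes \mathbb{C}^s$. The flip $K_{st}$ gives $e_d \otimes e_c$, then $A \otimes B$ gives $Ae_d \otimes Be_c \in \mathbb{C}^n \otimes \mathbb{C}^m$, and finally the flip $K_{mn}$ gives $Be_c \otimes Ae_d = (B \otimes A)(e_c \otimes e_d)$, as required.

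For the first identity of part (ii) I would use multi-index bookkeeping. Write $\mathbb{C}^n = \mathbb{C}^p \otimes \mathbb{C}^{n'}$, $\mathbb{C}^s = \mathbb{C}^q \otimes \mathbb{C}^{s'}$, $\mathbb{C}^m = \mathbb{C}^u \otimes \mathbb{C}^{m'}$, $\mathbb{C}^t = \mathbb{C}^v \otimes \mathbb{C}^{t'}$, so a row of $A$ is labelled $(i,\alpha)$ and a column $(j,\beta)$ with $(A)_{(i,\alpha),(j,\beta)} = (A_{ij})_{\alpha\beta}$, and likewise $(B)_{(k,\gamma),(l,\delta)} = (B_{kl})_{\gamma\delta}$. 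Both $A \otimes B$ and $A \boxtimes B$ then carry the \emph{same} entries $(A_{ij})_{\alpha\beta}(B_{kl})_{\gamma\delta}$, but $A \otimes B$ orders its rows as $(i,\alpha,k,\gamma)$ and columns as $(j,\beta,l,\delta)$, whereas $A \boxtimes B$ orders them as $(i,k,\alpha,\gamma)$ and $(j,l,\beta,\delta)$. Hence passing from $A \otimes B$ to $A \boxtimes B$ just interchanges the two middle tensor factors $\mathbb{C}^{n'}$ and $\mathbb{C}^u$ on the rows and $\mathbb{C}^{s'}$ and $\mathbb{C}^v$ on the columns. By the flip characterisation, the row swap is realised by left multiplication by $\mat{I}_{p} \otimes K_{un'} \otimes \mat{I}_{m'}$ and the column swap by right multiplication by $\mat{I}_{q} \otimes K_{s'v} \otimes \mat{I}_{t'}$; evaluating both sides on $e_j \otimes e_l \otimes e_\beta \otimes e_\delta$ confirms the identity. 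I expect this index tracking to be the main obstacle: the statement is shallow, but one must keep four indices and their orderings consistent.

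Finally, the identity for $B \boxtimes A$ follows by assembling the previous two. Applying the first identity of (ii) with the roles of $A$ and $B$ exchanged gives $B \boxtimes A = (\mat{I}_{u} \otimes K_{pm'} \otimes \mat{I}_{n'})(B \otimes A)(\mat{I}_{v} \otimes K_{t'q} \otimes \mat{I}_{s'})$. Now substitute $B \otimes A = K_{mn}(A \otimes B)K_{st}$ from (i), and rewrite $A \otimes B$ via the inverse of the first identity of (ii), namely $A \otimes B = (\mat{I}_{p} \otimes K_{n'u} \otimes \mat{I}_{m'})(A \boxtimes B)(\mat{I}_{q} \otimes K_{vs'} \otimes \mat{I}_{t'})$, using $(\mat{I}_{p} \otimes K_{un'} \otimes \mat{I}_{m'})^{-1} = \mat{I}_{p} \otimes K_{n'u} \otimes \mat{I}_{m'}$ and $K_{s'v}^{-1} = K_{vs'}$. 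Collecting the factors standing to the left of $A \boxtimes B$ yields exactly $P = (\mat{I}_{u} \otimes K_{pm'} \otimes \mat{I}_{n'})K_{mn}(\mat{I}_{p} \otimes K_{n'u} \otimes \mat{I}_{m'})$, and those to the right yield $Q = (\mat{I}_{q} \otimes K_{vs'} \otimes \mat{I}_{t'})K_{st}(\mat{I}_{v} \otimes K_{t'q} \otimes \mat{I}_{s'})$, the claimed permutation matrices.
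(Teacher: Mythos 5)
The paper gives no proof of this theorem at all---it is imported from \cite{tracy-jina}, \cite{commut}, \cite{neud} as a quoted result---so there is no internal argument to compare yours against; judged on its own, your proof is correct and self-contained. Your opening computation that $K_{mn}(e_a\otimes e_b)=e_b\otimes e_a$ (so that $K_{mn}$ is the flip $\mathbb{C}^n\otimes\mathbb{C}^m\to\mathbb{C}^m\otimes\mathbb{C}^n$, with $K_{mn}^{-1}=K_{nm}$) follows from Definition \ref{defn-commut} exactly as you say, and part (i) is then a one-line evaluation on decomposable tensors. The multi-index bookkeeping in (ii) is also sound: both $A\otimes B$ and $A\boxtimes B$ carry the entries $(A_{ij})_{\alpha\beta}(B_{kl})_{\gamma\delta}$, with rows ordered $(i,\alpha,k,\gamma)$ versus $(i,k,\alpha,\gamma)$ and columns $(j,\beta,l,\delta)$ versus $(j,l,\beta,\delta)$, so the passage is conjugation by middle-factor flips; the one genuine subtlety is that the column relabelling enters through the \emph{inverse} permutation, i.e.\ $(\mat{I}_{q}\otimes K_{vs'}\otimes\mat{I}_{t'})^{-1}=\mat{I}_{q}\otimes K_{s'v}\otimes\mat{I}_{t'}$, and you handle it correctly. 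Your assembly of the $B\boxtimes A$ identity---the first identity with the roles of $A$ and $B$ exchanged, then (i), then the inverted first identity---reproduces $P$ and $Q$ exactly as stated, and is consistent with the paper's remark following the theorem that $P$ and $Q$ are in general not mutually inverse.

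One bonus worth flagging: your structural observation that $\boxtimes$ is obtained from $\otimes$ by swapping the two middle tensor factors is precisely the ``technical computation, suggested by G.~Lechner'' that the paper later invokes without proof (around Proposition \ref{prop-prop-box-cd} and Remark \ref{rem-ts-general-matrices}) to identify $\mat{I}_{n}\otimes K_{mn}\otimes\mat{I}_{m}$ with the flip map $F_{23}$. Your multi-index argument supplies that missing computation, so it does more than verify the cited theorem.
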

 For more general formulas, in the case that the blocks do not have necessarily the same size, we refer to \cite{neud}. From Theorem \ref{thm-tracy-K} $(ii)$ and $(iii)$, the  formulas connecting  $A\boxtimes B$ with  $B \otimes A$ and with $B \boxtimes A$  are  reminiscent to the formula of change of basis, but  in general the permutation matrices in  Theorem \ref{thm-tracy-K} $(ii)$ or  $(iii)$ are  not  the inverse  one of another.

\section{Construction of large  solutions of the YBE and proof of Theorem \ref{thm1} }
 Let $c: (\mathbb{C}^n )^{\otimes 2} \rightarrow (\mathbb{C}^n )^{\otimes 2}$ and $d:(\mathbb{C}^m )^{\otimes 2}\rightarrow (\mathbb{C}^m )^{\otimes 2}$ be   $R$-matrices.    In this section, we present a method to construct iteratively  from  $c$ and $d$ an infinite family of  solutions of 	the  YBE. More precisely, we show that, $c \boxtimes d$,   the Tracy-Singh product of  the $R$-matrices   $c$ and $d$ with a particular partition into blocks of $c$ and $d$    is  also a  $R$-matrix, denoted by  $c \boxtimes d:\,(\mathbb{C}^{nm}) ^{\otimes 2}\rightarrow (\mathbb{C}^{nm}) ^{\otimes 2}$. Before we proceed, we want to address a question  that arises  naturally, which is  why  looking at  the Tracy-Singh product of  the $R$-matrices   $c$ and $d$  and not  at  their  Kronecker  product. As  Example \ref{ex3-kaufman} illustrates it, even in the case   $d=c$, the linear automorphism  $ c \otimes c: (\mathbb{C}^n )^{\otimes 4}    \rightarrow (\mathbb{C}^n )^{\otimes 4}$  is  not necessarily a solution of the YBE.
\begin{ex}\label{ex3-kaufman}
	Let $c$ be  the $R$-matrix from Example \ref{ex1-kaufman}.
	Then,   $c\otimes c$ is the following square matrix of size $16$, where $c'$ is the same matrix as $c$ with each entry  $\frac{1}{\sqrt{2}}$ replaced by $\frac{1}{2}$,  
\scalebox{0.9}[0.7]{$c\otimes c=	\begin{pmatrix}
	c'& \mat{0}_{4}  & \mat{0}_{4} & 	c'\\
	\mat{0}_{4}&	c'	 & -	c' & \mat{0}_{4} \\
	\mat{0}_{4} &	c'	 &	c' &\mat{0}_{4} \\
	-	c'&\mat{0}_{4}  &\mat{0}_{4} &	c' \\
	\end{pmatrix}$},  $c\otimes c$ is not a R-matrix, since it does not satisfy Equation (\ref{eqn-ybe-matrix}).\\
	 Indeed, if we denote $c\otimes c\otimes \mat{I}_{4}$  by $\hat{c}^{12}$  and $\mat{I}_{4}\otimes c\otimes c$ by $\hat{c}^{23}$, then 
	$\hat{c}^{12}\hat{c}^{23}\hat{c}^{12} \,\neq \,\hat{c}^{23}\hat{c}^{12}\hat{c}^{23}$, since $(\hat{c}^{23}\hat{c}^{12}\hat{c}^{23})_{1,4}\,=-0.5$ while $(\hat{c}^{12}\hat{c}^{23}\hat{c}^{12})_{14}=0$. 
\end{ex} 

\subsection{Properties of solutions of the Yang-Baxter equation}\label{subsec-properties-ybe}
Let $c: (\mathbb{C}^n )^{\otimes 2} \rightarrow (\mathbb{C}^n )^{\otimes 2} $ be a  $R$-matrix, with  partition into  $n^2$ square  blocks  $B_{ij}$ of size $n$ as in Equation (\ref{eqn-c-blocks}).
\begin{equation}\label{eqn-c-blocks}
c=
\begin{pmatrix}
B_{11}

& \rvline & B_{12}
& \rvline &... &\rvline & B_{1n} \\
\hline
B_{21}

& \rvline & B_{22}
& \rvline &... &\rvline & B_{2n} \\
\hline
...

& \rvline & ...
& \rvline &... &\rvline & ... \\
\hline

B_{n1}

& \rvline & B_{n2}
& \rvline &... &\rvline & B_{nn} \\

\end{pmatrix}
\end{equation}
So, by definition of $\otimes$, $\mat{I}_{n} \otimes c$ and $c \otimes \mat{I}_{n}$ are  square matrices of size $n^3$, as described in Equation (\ref{eqn-I-tensor-c-blocks}), with  a partition into square  blocks of size $n^2$ for a better understanding.
\begin{equation}\label{eqn-I-tensor-c-blocks}
\scalemath{0.92}{
\mat{I}_{n} \otimes c=
\begin{pmatrix}
c

& \rvline & \bigzero_{n^2}
& \rvline &... &\rvline & \bigzero_{n^2}\\
	\hline
\bigzero_{n^2}

& \rvline & c
& \rvline &... &\rvline & \bigzero_{n^2}\\
	\hline
...

& \rvline & ...
& \rvline &... &\rvline & ... \\
	\hline

\bigzero_{n^2}

& \rvline & \bigzero_{n^2}
& \rvline &... &\rvline & c \\

\end{pmatrix}
\;\;\;\;\;\;\;\;
c\,\otimes \,\mat{I}_{n} =
\begin{pmatrix}
B_{11}\otimes \mat{I}_{n} 

& \rvline & B_{12}\otimes \mat{I}_{n}
& \rvline &... &\rvline & B_{1n} \otimes \mat{I}_{n}\\
	\hline
B_{21}\otimes \mat{I}_{n}

& \rvline & B_{22}\otimes \mat{I}_{n}
& \rvline &... &\rvline & B_{2n} \otimes \mat{I}_{n}\\
	\hline
...

& \rvline & ...
& \rvline &... &\rvline & ... \\
		\hline

B_{n1}\otimes \mat{I}_{n}

& \rvline & B_{n2}\otimes \mat{I}_{n}
& \rvline &... &\rvline & B_{nn} \otimes \mat{I}_{n}
\end{pmatrix} }
\end{equation}
\begin{ex}\label{ex2-kaufman}
For  the $R$-matrices $c$ and $d$ from Example \ref{ex1-kaufman},  their partition into blocks:   $c=	\begin{pmatrix}
	\frac{1}{\sqrt{2}}& 0 &	 \rvline & 0 & 	\frac{1}{\sqrt{2}}\\
	0 &	\frac{1}{\sqrt{2}} 	& \rvline & -	\frac{1}{\sqrt{2}} & 0 \\
	\hline
	0 &	\frac{1}{\sqrt{2}} 	& \rvline &	\frac{1}{\sqrt{2}} &0 \\
	-	\frac{1}{\sqrt{2}} &0 	& \rvline & 0 &	\frac{1}{\sqrt{2}} \\
	\end{pmatrix}$ and   $d=	\begin{pmatrix}
	2& 0 &	 \rvline & 0 & 	0\\
	0 &	0	& \rvline & 1 & 0 \\
	\hline
	0 &	1& \rvline &1.5 &0 \\
0&0 	& \rvline & 0 &	2 \\
	\end{pmatrix}$.
\end{ex}
\begin{lem}\label{lem-c-ybe-with-blocks}
	Let $c: (\mathbb{C}^n )^{\otimes 2} \rightarrow (\mathbb{C}^n )^{\otimes 2} $  be a  $R$-matrix, with a partition as in Equation (\ref{eqn-c-blocks}). Then, for every $1 \leq i,k \leq n$, the following equation holds:
	\begin{equation}\label{eqn-c-ybe-condition}
c\,(B_{ik}\otimes \mat{I}_{n} )\,c\,=\,\sum\limits_{\ell=1}^{\ell=n}	(B_{i\ell}\otimes \mat{I}_{n} )	\,c\,(B_{\ell k}\otimes \mat{I}_{n} )
	\end{equation}
	\end{lem}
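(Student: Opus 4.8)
The plan is to recognise Equation (\ref{eqn-c-ybe-condition}) as nothing more than a single block-entry of the matrix Yang--Baxter equation (\ref{eqn-ybe-matrix}), read off after the two factors have been written in the $n\times n$ block form of Equation (\ref{eqn-I-tensor-c-blocks}). First I would fix that common block structure: both $\mat{I}_{n}\otimes c$ and $c\otimes \mat{I}_{n}$ are square of size $n^3$, partitioned into an $n\times n$ array of square blocks of size $n^2$. By (\ref{eqn-I-tensor-c-blocks}) the factor $\mat{I}_{n}\otimes c$ is \emph{block-diagonal}, with $(\mat{I}_{n}\otimes c)_{pq}=\delta_{pq}\,c$, while $c\otimes \mat{I}_{n}$ has $(p,q)$-block equal to $B_{pq}\otimes \mat{I}_{n}$. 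The whole proof is then the computation of the $(i,k)$-block of each side of (\ref{eqn-ybe-matrix}) using the ordinary block-multiplication rule $(XYZ)_{ik}=\sum_{p,q}X_{ip}Y_{pq}Z_{qk}$, where all sums run over $1\le p,q\le n$.

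For the right-hand side $(\mat{I}_{n}\otimes c)\,(c\otimes \mat{I}_{n})\,(\mat{I}_{n}\otimes c)$, the outer two factors are block-diagonal, so in $\sum_{p,q}\delta_{ip}\,c\,(B_{pq}\otimes \mat{I}_{n})\,\delta_{qk}\,c$ only the term $p=i,\ q=k$ survives, and the $(i,k)$-block collapses to $c\,(B_{ik}\otimes \mat{I}_{n})\,c$. For the left-hand side $(c\otimes \mat{I}_{n})\,(\mat{I}_{n}\otimes c)\,(c\otimes \mat{I}_{n})$ it is now the \emph{middle} factor that is block-diagonal, so $\sum_{p,q}(B_{ip}\otimes \mat{I}_{n})\,\delta_{pq}\,c\,(B_{qk}\otimes \mat{I}_{n})$ forces $p=q$ and the $(i,k)$-block becomes $\sum_{\ell=1}^{\ell=n}(B_{i\ell}\otimes \mat{I}_{n})\,c\,(B_{\ell k}\otimes \mat{I}_{n})$, after relabelling the surviving index as $\ell$.

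Equating these two expressions for the $(i,k)$-block, which are equal block-by-block precisely because (\ref{eqn-ybe-matrix}) holds, yields exactly Equation (\ref{eqn-c-ybe-condition}) for every $1\le i,k\le n$, finishing the proof. The argument is entirely routine; there is no genuine obstacle beyond bookkeeping, and the only point that needs a moment of care is confirming that the three matrices in (\ref{eqn-ybe-matrix}) are partitioned conformably (all into an $n\times n$ array of $n^2\times n^2$ blocks), so that the block product is legitimate and the Kronecker factors $B_{pq}\otimes \mat{I}_{n}$ and the diagonal copies of $c$ multiply in the indicated sizes.
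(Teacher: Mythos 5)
Your proposal is correct and takes essentially the same approach as the paper: the paper's proof likewise computes the $(i,k)$-block of each side of Equation (\ref{eqn-ybe-matrix}) under the $n\times n$ partition into square blocks of size $n^2$, using the block-diagonal form of $\mat{I}_{n}\otimes c$ to collapse the left side to $c\,(B_{ik}\otimes \mat{I}_{n})\,c$ and to reduce the right side to $\sum_{\ell=1}^{\ell=n}(B_{i\ell}\otimes \mat{I}_{n})\,c\,(B_{\ell k}\otimes \mat{I}_{n})$. Your Kronecker-delta bookkeeping is just a more compact rendering of the explicit block matrices the paper writes out, so the two arguments coincide.
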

\begin{proof}
	We compute the square matrices 		$	(\mat{I}_{n} \otimes c)\,(c \otimes \mat{I}_{n} )\,(\mat{I}_{n} \otimes c)$
	and 
		$	(c \otimes \mat{I}_{n} )\,(\mat{I}_{n} \otimes c)\,	(c \otimes \mat{I}_{n} )$:

			\begin{equation}\label{eqn-c23c12c23}
			(\mat{I}_{n} \otimes c)\,(c \otimes \mat{I}_{n} )\,(\mat{I}_{n} \otimes c)=
		\begin{pmatrix}
		c(B_{11}\otimes \mat{I}_{n} )c
		
		& \rvline & ..
		& \rvline & c(B_{1k}\otimes \mat{I}_{n} )c&\rvline &.. &\rvline& 	c(B_{1n}\otimes \mat{I}_{n} )c\\
		\hline
		...
		
		& \rvline & ..
		& \rvline &..&\rvline &.. &\rvline& ... \\
		
		\hline
		c(B_{i1}\otimes \mat{I}_{n} )c
		
		& \rvline & 	..
		& \rvline &c(B_{ik}\otimes \mat{I}_{n} )c&\rvline &.. &\rvline& 	c(B_{in}\otimes \mat{I}_{n} )c\\
		\hline
		...
		
		& \rvline & ..
		& \rvline &..&\rvline &.. &\rvline& ... \\
		\hline
		
		c(B_{n1}\otimes \mat{I}_{n} )c
		
		& \rvline & 	..
		& \rvline &c(B_{nk}\otimes \mat{I}_{n} )c&\rvline &.. &\rvline& 	c(B_{nn}\otimes \mat{I}_{n} ) c\\
		\end{pmatrix}
			\end{equation}
				That is, the square block of size $n^2$ at position $ik$ in $(\mat{I}_{n} \otimes c)\,(c \otimes \mat{I}_{n} )\,(\mat{I}_{n} \otimes c)$ has the form 
			$c(B_{ik}\otimes \mat{I}_{n} )c$. The matrix $(c \otimes \mat{I}_{n} )\,(\mat{I}_{n} \otimes c)\,(c \otimes \mat{I}_{n} )$ is equal to:

			\begin{equation*}\label{eqn-c12c23c12}
			\begin{pmatrix}
			B_{11}\otimes \mat{I}_{n} 
			
			& \rvline & ..
			& \rvline & B_{1j}\otimes \mat{I}_{n} &\rvline &.. &\rvline& 	B_{1n}\otimes \mat{I}_{n} \\
				\hline
			...
			
			& \rvline & ..
			& \rvline &..&\rvline &.. &\rvline& ... \\
		
			\hline
			B_{i1}\otimes \mat{I}_{n} 
			
			& \rvline & 	..
			& \rvline &B_{ij}\otimes \mat{I}_{n} &\rvline &.. &\rvline& 	B_{in}\otimes \mat{I}_{n} \\
			\hline
			...
			
			& \rvline & ..
			& \rvline &..&\rvline &.. &\rvline& ... \\
			\hline
			
			B_{n1}\otimes \mat{I}_{n} 
			
			& \rvline & 	..
			& \rvline &B_{nj}\otimes \mat{I}_{n} &\rvline &.. &\rvline& 	B_{nn}\otimes \mat{I}_{n} )\\
				\end{pmatrix}\cdot
				\begin{pmatrix}
				c(B_{11}\otimes \mat{I}_{n} )
				
				& \rvline & ..
				& \rvline & c(B_{1k}\otimes \mat{I}_{n} )&\rvline &.. &\rvline& 	c(B_{1n}\otimes \mat{I}_{n} )\\
				\hline
				...
				
				& \rvline & ..
				& \rvline &..&\rvline &.. &\rvline& ... \\
				
				\hline
				c(B_{j1}\otimes \mat{I}_{n} )
				
				& \rvline & 	..
				& \rvline &c(B_{jk}\otimes \mat{I}_{n} )&\rvline &.. &\rvline& 	c(B_{jn}\otimes \mat{I}_{n} )\\
				\hline
				...
				
				& \rvline & ..
				& \rvline &..&\rvline &.. &\rvline& ... \\
				\hline
				
				c(B_{n1}\otimes \mat{I}_{n} )
				
				& \rvline & 	..
				& \rvline &c(B_{nk}\otimes \mat{I}_{n} )&\rvline &.. &\rvline& 	c(B_{nn}\otimes \mat{I}_{n} ) \\
				\end{pmatrix}
			\end{equation*}
		That is, the square block of size $n^2$ at position $ik$ in  $(	(c \otimes \mat{I}_{n} )\,(\mat{I}_{n} \otimes c)\,(c \otimes \mat{I}_{n} ))$  has the form 
		$(B_{i1}\otimes \mat{I}_{n} )	c(B_{1k}\otimes \mat{I}_{n} )\,+	(B_{i2}\otimes \mat{I}_{n} )	c(B_{2k}\otimes \mat{I}_{n} )+\,	...+(B_{in}\otimes \mat{I}_{n} )	c(B_{nk}\otimes \mat{I}_{n} )$, that is 
		$\sum\limits_{\ell=1}^{\ell=n}	(B_{i\ell}\otimes \mat{I}_{n} )	c(B_{\ell k}\otimes \mat{I}_{n} )$. As $c$ satisfies the YBE,  (\ref{eqn-c-ybe-condition}) holds, for every $1 \leq i,k \leq n$.
\end{proof}
\subsection{ Construction of new $R$-matrices with the Tracy-Singh product  of matrices}
Let $c: (\mathbb{C}^n )^{\otimes 2} \rightarrow (\mathbb{C}^n )^{\otimes 2} $ and $d:(\mathbb{C}^m )^{\otimes 2}\rightarrow (\mathbb{C}^m )^{\otimes 2}$ be   $R$-matrices.    Let $c \boxtimes d$ denote their  Tracy-Singh product,  where  $c$ and $d$ have  a   block partition as in Equation (\ref{eqn-c-blocks}). 	Let $B_{ij}$ and  $B'_{ij}$ denote the blocks of $c$ and $d$ respectively.  Then the matrix $c \boxtimes d$  looks as follows:
\begin{equation}\label{eqn-ctracyc-blocks}
c \boxtimes d=\,
\begin{pmatrix}
B_{11}\otimes  B'_{11}
 ..
&.. B_{11}\otimes B'_{1m}&\rvline &.. &\rvline& 	B_{1n}\otimes B'_{11}.. & ..	B_{1n}\otimes B'_{1m} \\
\hdashline[2pt/2pt]
...
& ..
.&\rvline &.. &\rvline& ...&... \\
\hdashline[2pt/2pt]
B_{11}\otimes B'_{m1}
	..
 &..B_{11}\otimes B'_{mm} &\rvline &.. &\rvline& B_{1n}\otimes B'_{m1} ..&	..B_{1n}\otimes B'_{mm} \\
\hline
...
& .
..&\rvline &.. &\rvline& ...&... \\
\hdashline[2pt/2pt]
B_{n1}\otimes B'_{m1}
	..
&..B_{n1}\otimes B'_{mm}&\rvline &.. &\rvline& B_{nn}\otimes B'_{m1}..&..	B_{nn}\otimes B'_{mm} \\
\end{pmatrix}
\end{equation}
We show that $c \boxtimes d$ is an $R$-matrix, which means  there exists a linear operator  $\tilde{c_d}:\mathbb{C}^{nm}\otimes\mathbb{C}^{nm}\rightarrow \mathbb{C}^{nm} \otimes \mathbb{C}^{nm}$ and  an ordered basis of  $(\mathbb{C}^{nm})^{\otimes 2}$,   such that  $\tilde{c_d}$ is represented by $c \boxtimes d$ with respect to this basis. For that, we show that the square matrix of size $(nm)^2$, $c \boxtimes d$ is invertible and   satisfies Equation (\ref{eqn-ybe-matrix}). Clearly, if $c$ and $d$ are  invertible, then from  Theorem \ref{thm-tracy} $(vii)$, $c \boxtimes d$ is also invertible and furthermore $(c \boxtimes d)^{-1}\,=c^{-1} \boxtimes d^{-1}$. In the rest of this section, we show that $c \boxtimes d$ satisfies  Equation (\ref{eqn-tilde-c-ybe-equation}):
\begin{equation}\label{eqn-tilde-c-ybe-equation}
\scalemath{0.8}{
((c \boxtimes d)\otimes \mat{I}_{nm})\,( \mat{I}_{nm}\otimes(c \boxtimes d))\,((c \boxtimes d)\otimes \mat{I}_{nm})\;=\;( \mat{I}_{nm}\otimes(c \boxtimes d))\,((c \boxtimes d)\otimes \mat{I}_{nm})\,(( \mat{I}_{nm}\otimes(c \boxtimes d))}
\end{equation}
We now turn to the understanding of Equation (\ref{eqn-tilde-c-ybe-equation}). In Equation (\ref{eqn-tilde-c23-blocks}), we describe the square matrices of size $(nm)^3$,  $\mat{I}_{nm}\otimes(c \boxtimes d)$ at left and $(c \boxtimes d)\otimes\mat{I}_{nm}$ at right:
	\begin{equation}\label{eqn-tilde-c23-blocks}
	\scalemath{0.75}{
\begin{pmatrix}
		c \boxtimes d		
		& \rvline &	\bigzero_{(nm)^2}
		& \rvline &... &\rvline & 	\bigzero_{(nm)^2}\\
		\hline
		\bigzero_{(nm)^2}		
		& \rvline & c \boxtimes d
		& \rvline &... &\rvline & 	\bigzero_{(nm)^2}\\
		\hline
		...		
		& \rvline & ...
		& \rvline &... &\rvline & ... \\
		\hline
		\bigzero_{(nm)^2}		
		& \rvline & 	\bigzero_{(nm)^2}
		& \rvline &... &\rvline & c \boxtimes d
		\end{pmatrix}
		\;\;\textrm{and}\;\;
	\begin{pmatrix}
		B_{11}\otimes B'_{11}\otimes \mat{I}_{nm} 
		& \rvline & B_{11}\otimes B'_{12}\otimes \mat{I}_{nm}
		& \rvline &... &\rvline & B_{1n} \otimes B'_{1m}\otimes \mat{I}_{nm}\\
		\hline
		B_{11}\otimes B'_{21}\otimes \mat{I}_{nm} 		
		& \rvline & B_{11}\otimes B'_{22}\otimes \mat{I}_{nm}
		& \rvline &... &\rvline & B_{1n} \otimes B'_{2m}\otimes \mat{I}_{nm}\\
		\hline
		...
		& \rvline & ...
		& \rvline &... &\rvline & ... \\
		\hline		
		B_{n1}\otimes B'_{m1}\otimes \mat{I}_{nm}		
		& \rvline & B_{n1}\otimes B'_{m2}\otimes \mat{I}_{nm}
		& \rvline &... &\rvline & B_{nn} \otimes B'_{mm}\otimes \mat{I}_{nm}
		\end{pmatrix}}
		\end{equation}

\begin{lem}\label{lem-tilde-c-condition-for YBE}
	Let $c: (\mathbb{C}^n )^{\otimes 2} \rightarrow (\mathbb{C}^n )^{\otimes 2} $ and $d:(\mathbb{C}^m )^{\otimes 2}\rightarrow (\mathbb{C}^m )^{\otimes 2}$ be   $R$-matrices.  Let $c \boxtimes d$ denote their  Tracy-Singh product,  where  $c$ and $d$ have  a   block partition as in (\ref{eqn-c-blocks}). Let $B_{ij}$ and  $B'_{ij}$ denote the blocks of $c$ and $d$ respectively.  Let  $\lceil ..\rceil$  denotes the ceil function  and  $\bar{i},\bar{k}$ denote the residue modulo $m$ of $i$ and $k$ respectively and whenever the residue is $0$ it is replaced by $m$.
	 Then $c \boxtimes d$  is an $R$-matrix if and only if  the following equality of square matrices of size $(nm)^2$ holds,  for every $1 \leq i,k\leq nm$:
	\begin{equation}\label{eqn-tilde-c-ybe-condition}
	\scalemath{0.75}{
	(c \boxtimes d)\,(B_{ \lceil \frac{i}{m}\rceil\,\lceil \frac{k}{m}\rceil}\,\otimes\, B'_{\bar{i}\,\bar{k}}\,\otimes \,\mat{I}_{nm})\,(c \boxtimes d)\,=\,\sum\limits_{\ell=1}^{\ell=nm}	\,	(B_{ \lceil \frac{i}{m}\rceil\,\lceil \frac{\ell}{m}\rceil}\,\otimes\, B'_{\bar{i}\,\bar{\ell}}\,\otimes \,\mat{I}_{nm})\,(c \boxtimes d)\,
	(B_{ \lceil \frac{\ell}{m}\rceil\,\lceil \frac{k}{m}\rceil}\,\otimes\, B'_{\bar{\ell}\,\bar{k}}\,\otimes \,\mat{I}_{nm})}
	\end{equation}
\end{lem}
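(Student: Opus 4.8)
The plan is to derive Equation~(\ref{eqn-tilde-c-ybe-condition}) as a direct instance of Lemma~\ref{lem-c-ybe-with-blocks}, applied not to $c$ but to the matrix $c\boxtimes d$ itself, regarded as a candidate $R$-matrix of dimension $nm$. The point is that $c\boxtimes d$ is a square matrix of size $(nm)^2$ carrying a canonical partition into $(nm)^2$ square blocks of size $nm$, namely the blocks $B_{ij}\otimes B'_{kl}$ displayed in Equation~(\ref{eqn-ctracyc-blocks}); this is exactly the shape demanded by the hypothesis of Lemma~\ref{lem-c-ybe-with-blocks} with $n$ replaced throughout by $nm$. Moreover, Equation~(\ref{eqn-tilde-c23-blocks}) already records $\mat{I}_{nm}\otimes(c\boxtimes d)$ and $(c\boxtimes d)\otimes\mat{I}_{nm}$ in precisely this blocked form, which is what the computation in that lemma consumes.

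First I would fix the index dictionary. Writing a single super-index $i\in\{1,\dots,nm\}$ as $i=(\lceil i/m\rceil-1)m+\bar{i}$, the pair $(\lceil i/m\rceil,\bar{i})$ runs bijectively over $\{1,\dots,n\}\times\{1,\dots,m\}$ as $i$ runs over $\{1,\dots,nm\}$. Reading off Equation~(\ref{eqn-ctracyc-blocks}), whose outer $n\times n$ grid is indexed by the $B$-indices and whose inner $m\times m$ grid by the $B'$-indices, the block of $c\boxtimes d$ at position $(i,k)$ of the $nm$-block partition is exactly $B_{\lceil i/m\rceil\,\lceil k/m\rceil}\otimes B'_{\bar{i}\,\bar{k}}$. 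This identification is the one substantive bookkeeping step, and it rests only on the definition of the Tracy-Singh product together with the chosen lexicographic ordering of the blocks.

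With the dictionary in hand the rest is immediate. Since $c$ and $d$ are $R$-matrices they are invertible, so by Theorem~\ref{thm-tracy}(vii) the product $c\boxtimes d$ is invertible with $(c\boxtimes d)^{-1}=c^{-1}\boxtimes d^{-1}$; hence $c\boxtimes d$ is an $R$-matrix if and only if it satisfies the matrix Yang-Baxter Equation~(\ref{eqn-tilde-c-ybe-equation}). I would then invoke the block computation of Lemma~\ref{lem-c-ybe-with-blocks} verbatim, with the matrix $c\boxtimes d$ and block size $nm$ in place of $c$ and $n$; its proof uses only that the matrix is square of size $N^2$ and partitioned into $N^2$ blocks of size $N$. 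That computation evaluates the two triple products in~(\ref{eqn-tilde-c-ybe-equation}) block by block: at block position $(i,k)$ they equal, respectively, $\sum_{\ell=1}^{\ell=nm}(B^{c\boxtimes d}_{i\ell}\otimes\mat{I}_{nm})(c\boxtimes d)(B^{c\boxtimes d}_{\ell k}\otimes\mat{I}_{nm})$ and $(c\boxtimes d)(B^{c\boxtimes d}_{ik}\otimes\mat{I}_{nm})(c\boxtimes d)$, where $B^{c\boxtimes d}_{ik}$ denotes the $(i,k)$-block. Since two block matrices with the same partition are equal if and only if all their blocks agree, substituting $B^{c\boxtimes d}_{ik}=B_{\lceil i/m\rceil\,\lceil k/m\rceil}\otimes B'_{\bar{i}\,\bar{k}}$ turns the matrix equation into Equation~(\ref{eqn-tilde-c-ybe-condition}); thus the latter holds for all $1\le i,k\le nm$ if and only if $c\boxtimes d$ is an $R$-matrix.

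The main obstacle is notational rather than conceptual: one must be scrupulous that the canonical $nm$-block partition of $c\boxtimes d$, inherited from the nested $n\times n$ then $m\times m$ structure of the Tracy-Singh product, is genuinely compatible with the single-index block partition required by Lemma~\ref{lem-c-ybe-with-blocks}, so that the ceiling/residue relabelling matches blocks to blocks without an off-by-one slip. Once that compatibility is pinned down, no computation beyond Lemma~\ref{lem-c-ybe-with-blocks} is needed.
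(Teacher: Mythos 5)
Your proposal is correct and takes essentially the same approach as the paper: the paper's own proof of Lemma \ref{lem-tilde-c-condition-for YBE} consists exactly of redoing the block computation of Lemma \ref{lem-c-ybe-with-blocks} for the matrix $c\boxtimes d$ with block size $nm$, using the same index dictionary identifying the $(i,k)$-block of $c\boxtimes d$ with $B_{\lceil i/m\rceil\,\lceil k/m\rceil}\otimes B'_{\bar{i}\,\bar{k}}$. Your only deviation is to cite that earlier computation rather than repeat it, correctly observing that it never uses the YBE hypothesis on the matrix being cubed, so the ``if and only if'' (together with invertibility via Theorem \ref{thm-tracy}(vii)) follows just as in the paper.
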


\begin{proof}
	We compute the matrix of size $(nm)^3$, $	(\mat{I}_{nm} \otimes (c \boxtimes d))\,((c \boxtimes d)\otimes \mat{I}_{nm} )\,(\mat{I}_{nm} \otimes (c \boxtimes d))$:
	\begin{equation*}\label{eqn-tilde-c23c12c23}
\begin{pmatrix}
(c \boxtimes d)(B_{11}\otimes B'_{11}\otimes \mat{I}_{nm})(c \boxtimes d)

& \rvline &... &\rvline & (c \boxtimes d)(B_{1n} \otimes B'_{1m}\otimes \mat{I}_{nm})(c \boxtimes d)\\
\hline
(c \boxtimes d)(B_{11}\otimes B'_{21}\otimes \mat{I}_{nm})(c \boxtimes d)

& \rvline &... &\rvline & (c \boxtimes d)(B_{1n} \otimes B'_{2m}\otimes \mat{I}_{nm})(c \boxtimes d)\\
\hline
...

& \rvline & ...
& \rvline &... &\rvline  \\
\hline

(c \boxtimes d)(B_{n1}\otimes B'_{m1}\otimes \mat{I}_{nm})(c \boxtimes d)

& \rvline &... &\rvline & (c \boxtimes d)(B_{nn} \otimes B'_{mm}\otimes \mat{I}_{nm})(c \boxtimes d)\\

\end{pmatrix}
\end{equation*}
	That is, the square block of size $n^2m^2$ at position $ik$ in $	(\mat{I}_{nm} \otimes (c \boxtimes d))\,((c \boxtimes d)\otimes \mat{I}_{nm} )\,(\mat{I}_{nm} \otimes (c \boxtimes d))$ has the following form:
	\begin{equation}\label{eqn-block-ik-tilde-c23-c12-c23}
	(c \boxtimes d)\,(B_{ \lceil \frac{i}{m}\rceil\,\lceil \frac{k}{m}\rceil}\,\otimes\, B'_{\bar{i}\,\bar{k}}\,\otimes \,\mat{I}_{nm})\,(c \boxtimes d)
	\end{equation}
	We compute the square matrix of size $(nm)^3$, $((c \boxtimes d)\otimes \mat{I}_{nm})\,(\mat{I}_{nm} \otimes (c \boxtimes d))\,((c \boxtimes d)\otimes\mat{I}_{nm} )$:
\begin{equation*}\label{eqn-tilde-c12c23c12}
			\scalemath{0.8}{\begin{pmatrix}
		B_{11}\otimes B'_{11}\otimes \mat{I}_{nm}		
		& \rvline & ..
		&\rvline& 	B_{1n}\otimes B'_{1m}\otimes \mat{I}_{nm}\\
		\hline
		...
		& \rvline & ..
		&\rvline& ... \\		
		\hline
		B_{i1}\otimes B'_{11}\otimes \mat{I}_{nm}		
		& \rvline & 	..
		&\rvline& 	B_{in}\otimes B'_{1m}\otimes \mat{I}_{nm}\\
		\hline
		...	
		& \rvline & ..
		&\rvline& ... \\
		\hline
		B_{n1}\otimes B'_{m1}\otimes \mat{I}_{nm}		
		& \rvline & 	..
		&\rvline& 	B_{nn}\otimes B'_{mm}\otimes \mat{I}_{nm}\\
		\end{pmatrix}\cdot
		\begin{pmatrix}
		(c \boxtimes d)(B_{11}\otimes B'_{11}\otimes \mat{I}_{nm})		
		& \rvline & ..
		& \rvline& 		(c \boxtimes d)(B_{1n}\otimes B'_{1m}\otimes \mat{I}_{nm})\\
		\hline
		...		
		& \rvline & ..
		& \rvline &..\\
		\hline
		(c \boxtimes d)(B_{i1}\otimes B'_{11}\otimes \mat{I}_{nm})		
		& \rvline & 	..
		& \rvline & 		(c \boxtimes d)(B_{in}\otimes B'_{1m}\otimes \mat{I}_{nm})\\
		\hline
		...
		& \rvline & ..
		& \rvline &..\\
		\hline		
		(c \boxtimes d)(B_{n1}\otimes B'_{m1}\otimes \mat{I}_{nm})
		& \rvline & 	..
		& \rvline & 	(c \boxtimes d)(B_{nn}\otimes B'_{mm}\otimes \mat{I}_{nm}) \\
		\end{pmatrix} }
	\end{equation*}
 The square block of size $n^2m ^2$ at position $ik$ in $((c \boxtimes d)\otimes \mat{I}_{nm})\,(\mat{I}_{nm} \otimes (c \boxtimes d))\,((c \boxtimes d)\otimes\mat{I}_{nm} )$:
	\begin{equation}\label{eqn-block-ik-tilde-c12-c23-c12}
	\sum\limits_{\ell=1}^{\ell=nm}	\,	(B_{ \lceil \frac{i}{m}\rceil\,\lceil \frac{\ell}{m}\rceil}\,\otimes\, B'_{\bar{i}\,\bar{\ell}}\,\otimes \,\mat{I}_{nm})\,(c \boxtimes d)\,
	(B_{ \lceil \frac{\ell}{m}\rceil\,\lceil \frac{k}{m}\rceil}\,\otimes\, B'_{\bar{\ell}\,\bar{k}}\,\otimes \,\mat{I}_{nm})
	\end{equation}
	So, $c \boxtimes d$  is an $R$-matrix, that is $c\boxtimes d$ satisfies Equation (\ref{eqn-tilde-c-ybe-equation}) if and only if  Equation (\ref{eqn-tilde-c-ybe-condition}) holds,  for every $1 \leq i,k\leq nm$.
\end{proof}
The reader may wonder where do the $ \lceil \frac{i}{m}\rceil$, $\lceil \frac{k}{m}\rceil$, $\bar{i}$  and $\bar{k}$ come from in Lemma \ref{lem-tilde-c-condition-for YBE}. The reason is purely combinatorial. Indeed, in $c \boxtimes d$,  the  first $m$  horizontal and the first $m$ vertical blocks look like $B_{11}  \otimes B'_{pq}$, $1 \leq p,q \leq m$,  the first $m$ horizontal  and the next  $m$ vertical blocks look like $B_{12}  \otimes B'_{pq}$, $1 \leq p,q \leq m$,  and so on until  $B_{1n}  \otimes B'_{pq}$, $1 \leq p,q \leq m$.  This explains the apparition of  $ \lceil \frac{i}{m}\rceil$ and $\lceil \frac{k}{m}\rceil$.  To explain the apparition of $\bar{i}$ and $\bar{k}$, one needs to make the same process with focus on $B'_{pq}$, $1 \leq p,q \leq m$. In size to prove that $c \boxtimes d$  is an $R$-matrix,  we will prove that it satisfies the conditions of Lemma \ref{lem-tilde-c-condition-for YBE}. The following two lemmas are very useful and important for the proof. \begin{lem}\label{lem-A-I-tracy-B-I}
	Let $A$ and $B$ be square matrices of size $n$ and $m$ respectively. Then, with a certain block partition in the Tracy-Singh product, the following equation holds:
	\begin{equation}\label{eqn-A-I-tracy-B-I}
	(A\otimes \mat{I}_{n})\,\boxtimes\,(B\otimes \mat{I}_{m})\,=\,A\otimes B \otimes  \mat{I}_{nm}
	\end{equation}
\end{lem}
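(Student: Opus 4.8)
The plan is to use the two canonical block partitions of the factors and reduce the claimed identity to a block-by-block comparison, after which the whole statement collapses to the elementary facts that $\mat{I}_n \otimes \mat{I}_m = \mat{I}_{nm}$ and that the Kronecker product is homogeneous in scalars.

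First I would fix the partitions. Writing $A=(a_{ij})$ and $B=(b_{kl})$, the matrix $A\otimes \mat{I}_n$ carries the obvious partition into $n^2$ blocks of size $n$, in which the $ij$-th block is exactly $a_{ij}\mat{I}_n$; likewise $B\otimes \mat{I}_m$ carries the partition into $m^2$ blocks of size $m$ whose $kl$-th block is $b_{kl}\mat{I}_m$. These are the ``certain block partitions'' referred to in the statement, and they are the ones used to form the Tracy-Singh product. I would then apply the definition $P\boxtimes Q=((P_{ij}\otimes Q_{kl})_{kl})_{ij}$ verbatim: the generic block is $(a_{ij}\mat{I}_n)\otimes(b_{kl}\mat{I}_m)$, which by scalar pull-out of the Kronecker product and $\mat{I}_n\otimes \mat{I}_m=\mat{I}_{nm}$ equals $a_{ij}b_{kl}\,\mat{I}_{nm}$. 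Hence $(A\otimes \mat{I}_n)\boxtimes(B\otimes \mat{I}_m)$ is the block matrix whose block at outer position $(i,j)$ and inner position $(k,l)$ is $a_{ij}b_{kl}\,\mat{I}_{nm}$.

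Finally I would compute the right-hand side and match positions. Since the $\bigl((i-1)m+k,\,(j-1)m+l\bigr)$ entry of $A\otimes B$ is $a_{ij}b_{kl}$, the matrix $A\otimes B\otimes \mat{I}_{nm}$ is the block matrix whose $\bigl((i-1)m+k,\,(j-1)m+l\bigr)$-th block of size $nm$ equals $a_{ij}b_{kl}\,\mat{I}_{nm}$, i.e.\ the very same collection of blocks found above. The only genuine point to verify is that the double index $(ij,kl)$ generated by the Tracy-Singh construction is laid out in the same order as the single index $(i-1)m+k$ (and $(j-1)m+l$) that orders $A\otimes B$; this bookkeeping check, namely that in both matrices the inner index $kl$ runs fastest inside the outer index $ij$, is where I expect the only real (and entirely routine) care to be needed. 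Once the two orderings are seen to coincide, the equality \eqref{eqn-A-I-tracy-B-I} follows at once, and no commutation-matrix machinery from Theorem \ref{thm-tracy-K} is required.
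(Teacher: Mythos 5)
Your proof is correct and takes essentially the same route as the paper's: both fix the partition of $A\otimes \mat{I}_{n}$ into the $n^2$ blocks $a_{ij}\mat{I}_{n}$ (and of $B\otimes \mat{I}_{m}$ into the $m^2$ blocks $b_{kl}\mat{I}_{m}$), compute the generic Tracy-Singh block as $(a_{ij}\mat{I}_{n})\otimes(b_{kl}\mat{I}_{m})=a_{ij}b_{kl}\,\mat{I}_{nm}$, and match this against the explicit block description of $A\otimes B\otimes \mat{I}_{nm}$. Your index check at position $\bigl((i-1)m+k,\,(j-1)m+l\bigr)$ simply makes explicit the bookkeeping that the paper leaves implicit in its displayed block matrices.
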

\begin{proof}
	The matrix $A\otimes B \otimes  \mat{I}_{nm}$ is a square matrix of size $(nm)^2$, which can be partitioned into  $(nm)^2$  square blocks of size $nm$, where each block is a scalar matrix, described as:
	\begin{equation}\label{eqn-A-B-tensor In2}
	A\otimes B \otimes  \mat{I}_{nm}=
	\begin{pmatrix}
	a_{11}b_{11}\mat{I}_{nm}

	& \rvline & ..
	& \rvline & a_{11}b_{1m}\mat{I}_{nm}&\rvline &.. &\rvline& 	a_{1n}b_{1m}\mat{I}_{nm}\\
	\hline
	...
	
	& \rvline & ..
	& \rvline &..&\rvline &.. &\rvline& ... \\
	
	\hline
	a_{i1}b_{11}\mat{I}_{nm}
	
	& \rvline & 	..
	& \rvline &a_{i1}b_{1m}\mat{I}_{nm}&\rvline &.. &\rvline& 	a_{in}b_{mm}\mat{I}_{nm}\\
	\hline
	...
	
	& \rvline & ..
	& \rvline &..&\rvline &.. &\rvline& ... \\
	\hline
	
	a_{n1}b_{m1}\mat{I}_{nm}
	
	& \rvline & 	..
	& \rvline &a_{n1}b_{mm}\mat{I}_{nm}&\rvline &.. &\rvline& 	a_{nn}b_{mm}\mat{I}_{nm}\\
	\end{pmatrix}
	\end{equation}
	\begin{equation}\label{eqn-A-In-tracy-B_In}
	\scalemath{0.85}{
	(A\otimes \mat{I}_{n})\,\boxtimes\,(B\otimes \mat{I}_{m})\,=
	\begin{pmatrix}
	a_{11}\mat{I}_{n}
	
	& \rvline & 	a_{12}\mat{I}_{n}
	& \rvline &... &\rvline & 	a_{1n}\mat{I}_{n}\\
	\hline
	a_{21}\mat{I}_{n}
	
	& \rvline & 	a_{22}\mat{I}_{n}
	& \rvline &... &\rvline & 	a_{2n}\mat{I}_{n}\\
	\hline
	...
	
	& \rvline & ...
	& \rvline &... &\rvline & ... \\
	\hline
	
	a_{n1}\mat{I}_{n}
	
	& \rvline & 	a_{n2}\mat{I}_{n}
	& \rvline &... &\rvline & 	a_{nn}\mat{I}_{n}\\
	
	\end{pmatrix}\boxtimes
	\begin{pmatrix}
	b_{11}\mat{I}_{m}
	
	& \rvline & 	b_{12}\mat{I}_{m}
	& \rvline &... &\rvline & 	b_{1m}\mat{I}_{m}\\
	\hline
	b_{21}\mat{I}_{m}
	
	& \rvline & 	b_{22}\mat{I}_{m}
	& \rvline &... &\rvline & 	b_{2m}\mat{I}_{m}\\
	\hline
	...
	
	& \rvline & ...
	& \rvline &... &\rvline & ... \\
	\hline
	
	b_{m1}\mat{I}_{m}
	
	& \rvline & 	b_{m2}\mat{I}_{m}
	& \rvline &... &\rvline & 	b_{mm}\mat{I}_{m}\\
	\end{pmatrix}	}
	\end{equation}
	From the definition of the Tracy-Singh product with the block partition described above and since $\mat{I}_{n} \otimes \mat{I}_{m}\,=\,\mat{I}_{nm}$, we have 
	$(A\otimes \mat{I}_{n})\,\boxtimes\,(B\otimes \mat{I}_{m})\,=\,A\otimes B \otimes  \mat{I}_{nm}$.
\end{proof}
\begin{prop}\label{lem-box-same}
	Let $c: (\mathbb{C}^n )^{\otimes 2} \rightarrow (\mathbb{C}^n )^{\otimes 2} $ and $d:(\mathbb{C}^m )^{\otimes 2}\rightarrow (\mathbb{C}^m )^{\otimes 2}$ be   $R$-matrices.  Let $c \boxtimes d$ denote their  Tracy-Singh product,  where  $c$ and $d$ have  a   block partition as in (\ref{eqn-c-blocks}). Let $B_{ij}$ and  $B'_{ij}$ denote the blocks of $c$ and $d$ respectively.    $\forall\, 1 \leq i,k\leq n,\,\forall \, 1 \leq p,q \leq m$:
	\begin{equation}\label{eqn-expression-box-itself}
	\scalemath{0.86}{
	(c \boxtimes d)\,(B_{ ik}\,\otimes\, B'_{pq}\,\otimes \,\mat{I}_{nm})\,(c \boxtimes d)\,=\,\sum\limits_{u=1}^{u=n}\,\sum\limits_{v=1}^{v=m}	\,	(B_{ iu}\,\otimes\, B'_{pv}\,\otimes \,\mat{I}_{nm})\,(c \boxtimes d)\,
	(B_{ u k}\,\otimes\, B'_{vq}\,\otimes \,\mat{I}_{nm})}
	\end{equation}
\end{prop}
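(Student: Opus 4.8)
The plan is to reduce the identity \eqref{eqn-expression-box-itself} to the two \emph{separate} block-level Yang--Baxter conditions for $c$ and for $d$ furnished by Lemma \ref{lem-c-ybe-with-blocks}, exploiting the multiplicativity of the Tracy--Singh product to keep the $c$-factor and the $d$-factor uncoupled throughout the computation. The pivotal observation is that every matrix of the shape $B_{ab}\otimes B'_{cd}\otimes\mat{I}_{nm}$ occurring in \eqref{eqn-expression-box-itself} is, by Lemma \ref{lem-A-I-tracy-B-I}, precisely the Tracy--Singh product $(B_{ab}\otimes\mat{I}_n)\boxtimes(B'_{cd}\otimes\mat{I}_m)$, taken with the block partition compatible with the one used to form $c\boxtimes d$.

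First I would rewrite the left-hand side of \eqref{eqn-expression-box-itself}, via Lemma \ref{lem-A-I-tracy-B-I}, as
\[
(c\boxtimes d)\,\bigl[(B_{ik}\otimes\mat{I}_n)\boxtimes(B'_{pq}\otimes\mat{I}_m)\bigr]\,(c\boxtimes d),
\]
and then apply the mixed-product rule $(A\boxtimes B)(C\boxtimes D)=AC\boxtimes BD$ of Theorem \ref{thm-tracy}$(v)$ twice, once to absorb the left copy of $c\boxtimes d$ and once to absorb the right copy. This collapses the left-hand side into a single Tracy--Singh product of two independent triple products:
\[
\bigl[c\,(B_{ik}\otimes\mat{I}_n)\,c\bigr]\;\boxtimes\;\bigl[d\,(B'_{pq}\otimes\mat{I}_m)\,d\bigr].
\]
Next I would invoke Lemma \ref{lem-c-ybe-with-blocks} for $c$ and, since $d$ is also an $R$-matrix, the same lemma for $d$, expanding each bracketed factor as $\sum_{u=1}^n (B_{iu}\otimes\mat{I}_n)\,c\,(B_{uk}\otimes\mat{I}_n)$ and $\sum_{v=1}^m (B'_{pv}\otimes\mat{I}_m)\,d\,(B'_{vq}\otimes\mat{I}_m)$ respectively. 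Distributing $\boxtimes$ over these two sums by the bilinearity of Theorem \ref{thm-tracy}$(iv)$ produces the double sum $\sum_{u,v}$ of terms $\bigl[(B_{iu}\otimes\mat{I}_n)c(B_{uk}\otimes\mat{I}_n)\bigr]\boxtimes\bigl[(B'_{pv}\otimes\mat{I}_m)d(B'_{vq}\otimes\mat{I}_m)\bigr]$; applying the mixed-product rule of Theorem \ref{thm-tracy}$(v)$ in reverse together with Lemma \ref{lem-A-I-tracy-B-I} to each summand recovers exactly the right-hand side of \eqref{eqn-expression-box-itself}, with the indices $u$ and $v$ matching on the nose.

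The only genuinely delicate point, and the step I expect to be the main obstacle, is the partition bookkeeping: both the mixed-product identity of Theorem \ref{thm-tracy}$(v)$ and the identity of Lemma \ref{lem-A-I-tracy-B-I} hold only for specific, mutually compatible block partitions, so one must verify that the single partition used to define $c\boxtimes d$ is simultaneously the one legitimizing every application of these two results. I would therefore fix this partition at the outset (the $n^2$-into-$n$ block structure on the $c$-factor and the $m^2$-into-$m$ structure on the $d$-factor, each tensored with $\mat{I}_{nm}$) and check once that the column partition of the left factor agrees with the row partition of the right factor at each matrix multiplication. Once this compatibility is settled, the remaining manipulation is purely formal algebra and the claimed equality follows.
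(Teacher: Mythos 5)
Your proposal is correct and is essentially the paper's own proof: both arguments reduce \eqref{eqn-expression-box-itself} to the two block-level identities of Lemma \ref{lem-c-ybe-with-blocks} applied to $c$ and to $d$, and pass between the coupled expression $(c\boxtimes d)(B_{ik}\otimes B'_{pq}\otimes \mat{I}_{nm})(c\boxtimes d)$ and the uncoupled form $\bigl[c(B_{ik}\otimes\mat{I}_n)c\bigr]\boxtimes\bigl[d(B'_{pq}\otimes\mat{I}_m)d\bigr]$ via the mixed-product rule of Theorem \ref{thm-tracy}$(v)$, the bilinearity of Theorem \ref{thm-tracy}$(iv)$, and Lemma \ref{lem-A-I-tracy-B-I}. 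The only differences are presentational: you run the computation forward from the left-hand side while the paper takes the Tracy--Singh product of the two identities and simplifies each side, and your explicit check of partition compatibility makes precise what the paper leaves implicit.
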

\begin{proof}
	From Lemma \ref{lem-c-ybe-with-blocks}, 	for every $1 \leq i,k \leq n$, $1 \leq p,q \leq m$:
	\begin{gather*}
	c(B_{ik}\otimes \mat{I}_{n} )c\,=\,\sum\limits_{u=1}^{u=n}	(B_{iu}\otimes \mat{I}_{n} )	c(B_{u k}\otimes \mat{I}_{n} )\\
	d(B'_{pq}\otimes \mat{I}_{m} )d\,=\,\sum\limits_{v=1}^{v=m}	(B'_{pv}\otimes \mat{I}_{m} )	d(B'_{v q}\otimes \mat{I}_{m} )
	\end{gather*}
	So, $(c(B_{ik}\otimes \mat{I}_{n} )c)\,\boxtimes( d(B'_{pq}\otimes \mat{I}_{m} )d)\,=\,(\sum\limits_{u=1}^{u=n}	(B_{iu}\otimes \mat{I}_{n} )	c(B_{u k}\otimes \mat{I}_{n} ))\,\boxtimes \,(\sum\limits_{v=1}^{v=m}	(B'_{pv}\otimes \mat{I}_{m} )	d(B'_{v q}\otimes \mat{I}_{m} ))$. \\
	We compute  	$(c(B_{ik}\otimes \mat{I}_{n} )c)\,\boxtimes( d(B'_{pq}\otimes \mat{I}_{m} )d)$, using   Theorem  \ref{thm-tracy} $(v)$ and then  Lemma \ref{lem-A-I-tracy-B-I}:
	\begin{gather*}
	(c(B_{ik}\otimes \mat{I}_{n} )c)\,\boxtimes( d(B'_{pq}\otimes \mat{I}_{m} )d)\,=\\
	(c\boxtimes d)\,((B_{ik}\otimes \mat{I}_{n} )\,\boxtimes\, (B'_{pq}\otimes \mat{I}_{m} ))\,(c \boxtimes d)\,=\\
	(c\boxtimes d)\, (B_{ik}\otimes B'_{pq}\otimes \mat{I}_{nm})\,\,(c\boxtimes d)
	\end{gather*}
	We compute $	(\sum\limits_{u=1}^{u=n}	(B_{iu}\otimes \mat{I}_{n} )\,	c\,(B_{u k}\otimes \mat{I}_{n} ))\,\boxtimes \,(\sum\limits_{v=1}^{v=m}	(B'_{pv}\otimes \mat{I}_{m} )\,	d\,(B'_{v q}\otimes \mat{I}_{m} ))$,  using first  Theorem \ref{thm-tracy} $(iv)$ and $(v)$ and next   Lemma \ref{lem-A-I-tracy-B-I}:
	\begin{gather*}
(\sum\limits_{u=1}^{u=n}	(B_{iu}\otimes \mat{I}_{n} )\,	c\,(B_{u k}\otimes \mat{I}_{n} ))\,\boxtimes \,(\sum\limits_{v=1}^{v=m}	(B'_{pv}\otimes \mat{I}_{m} )\,	d\,(B'_{v q}\otimes \mat{I}_{m} ))=\\
	\sum\limits_{u=1}^{u=n}\,\sum\limits_{v=1}^{v=m} \, 
	((B_{iu}\otimes \mat{I}_{n} )	c\,(B_{u k}\otimes \mat{I}_{n} ))\, \boxtimes\,((B'_{pv}\otimes \mat{I}_{m} )	\,d\,(B'_{v q}\otimes \mat{I}_{m} ))\,=\\
	\sum\limits_{u=1}^{u=n}\,\sum\limits_{v=1}^{v=m} \, 
	((B_{iu}\otimes \mat{I}_{n})\boxtimes\,(B'_{pv}\otimes \mat{I}_{m} ))\,\,(c \boxtimes d)\,\,	((B_{u k}\otimes \mat{I}_{n} )\,  \boxtimes (B'_{v q}\otimes \mat{I}_{m} ))\,=\\
	\sum\limits_{u=1}^{u=n}\,\sum\limits_{v=1}^{v=m} \, 
	(B_{iu}\otimes \,B'_{pv}\otimes \mat{I}_{nm} )\,\,(c \boxtimes d)\,\,	(B_{u k}\otimes \, B'_{v q}\otimes \mat{I}_{nm} )
	\end{gather*}
	So, Equation (\ref{eqn-expression-box-itself}) holds.
	\end{proof}
We are now able to formulate  Theorem \ref{thm1} more precisely  and give its proof.
\begin{thm}\label{theo-tracy-unitary}
		Let $c: (\mathbb{C}^n )^{\otimes 2} \rightarrow (\mathbb{C}^n )^{\otimes 2} $ and $d:(\mathbb{C}^m )^{\otimes 2}\rightarrow (\mathbb{C}^m )^{\otimes 2}$ be   $R$-matrices.  Let $c \boxtimes d$ their  Tracy-Singh product,  where  $c$ and $d$ have  a   block partition as in (\ref{eqn-c-blocks}). 	 	Then
	\begin{enumerate}[(i)]
			\item    $c\boxtimes d:\,(\mathbb{C}^{nm} )^{\otimes 2} \rightarrow (\mathbb{C}^{nm} )^{\otimes 2}$  is also an $R$-matrix.
			\item 	If $c$ and $d$ are unitary, then $c \boxtimes d$ is also unitary.		
		\end{enumerate}
		Moreover, for every natural numbers $k, \ell \geq 1$,    there exists a  solution of the   Yang-Baxter equation,  $\tilde{C}: \mathcal{V}\otimes \mathcal{V}\rightarrow  \mathcal{V} \otimes \mathcal{V}$, with $\operatorname{dim}(\mathcal{V})=n^{k}m^{\ell}$, induced from $c$ and $d$.  
	\end{thm}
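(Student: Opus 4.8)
The plan is to derive part $(i)$ directly by matching the combinatorial criterion of Lemma \ref{lem-tilde-c-condition-for YBE} against the identity already established in Proposition \ref{lem-box-same}. By Lemma \ref{lem-tilde-c-condition-for YBE}, the operator $c \boxtimes d$ is an $R$-matrix precisely when Equation (\ref{eqn-tilde-c-ybe-condition}) holds for all $1 \le i,k \le nm$; and by Theorem \ref{thm-tracy}$(vii)$ the matrix $c \boxtimes d$ is automatically invertible, with $(c\boxtimes d)^{-1} = c^{-1}\boxtimes d^{-1}$, since $c$ and $d$ are linear automorphisms. So the entire content of $(i)$ reduces to verifying (\ref{eqn-tilde-c-ybe-condition}).

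The key observation is that (\ref{eqn-tilde-c-ybe-condition}) and (\ref{eqn-expression-box-itself}) are the \emph{same} statement written in two different index conventions. Given $i \in \{1,\dots,nm\}$, encode it as the pair $(\lceil i/m\rceil, \bar i) \in \{1,\dots,n\}\times\{1,\dots,m\}$; this assignment is a bijection onto the product set. Writing $a = \lceil i/m\rceil$, $p=\bar i$, $b=\lceil k/m\rceil$, $q=\bar k$, the left-hand side of (\ref{eqn-tilde-c-ybe-condition}) becomes exactly the left-hand side of (\ref{eqn-expression-box-itself}) for the indices $a,b,p,q$. On the right, the single summation index $\ell$ over $\{1,\dots,nm\}$ factors through the same bijection $\ell \leftrightarrow (\lceil \ell/m\rceil, \bar\ell)=(u,v)$, so the single sum $\sum_{\ell=1}^{nm}$ becomes the double sum $\sum_{u=1}^{n}\sum_{v=1}^{m}$ and each summand $(B_{a u}\otimes B'_{p v}\otimes \mat{I}_{nm})(c\boxtimes d)(B_{u b}\otimes B'_{v q}\otimes \mat{I}_{nm})$ coincides term-by-term with the summand in (\ref{eqn-expression-box-itself}). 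Hence Proposition \ref{lem-box-same} yields (\ref{eqn-tilde-c-ybe-condition}) for every $i,k$, and $(i)$ follows. The only genuine work here is this bookkeeping identification of the ceiling/residue indices with the pair $(u,v)$; this is where I expect the argument to feel delicate, although it is ultimately purely combinatorial, since the substance of the Yang-Baxter verification is already carried by Lemma \ref{lem-c-ybe-with-blocks}, Lemma \ref{lem-A-I-tracy-B-I} and Proposition \ref{lem-box-same}.

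For part $(ii)$, I would use the multiplicativity of the Tracy-Singh product. Entrywise complex conjugation distributes over $\boxtimes$ (each entry of $A\boxtimes B$ is a product of an entry of $A$ with an entry of $B$), and by Theorem \ref{thm-tracy}$(viii)$ transposition distributes as well, so the conjugate transpose satisfies $(c \boxtimes d)^{\dagger} = c^{\dagger} \boxtimes d^{\dagger}$. Then Theorem \ref{thm-tracy}$(v)$ gives $(c\boxtimes d)^{\dagger}(c\boxtimes d) = (c^{\dagger}c)\boxtimes(d^{\dagger}d) = \mat{I}_{n^2}\boxtimes \mat{I}_{m^2}$, and with the chosen block partition Theorem \ref{thm-tracy}$(ix)$ identifies $\mat{I}_{n^2}\boxtimes\mat{I}_{m^2} = \mat{I}_{(nm)^2}$; thus $c\boxtimes d$ is unitary.

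Finally, for the ``moreover'' clause I would argue by induction on the number of Tracy-Singh factors, using associativity (Theorem \ref{thm-tracy}$(iii)$) together with part $(i)$. Each $R$-matrix produced by $(i)$ again acts on a tensor square $(\mathbb{C}^{N})^{\otimes 2}$ and carries the canonical block partition of Equation (\ref{eqn-c-blocks}), so $(i)$ applies afresh at every step. Taking the iterated product $\underbrace{c\boxtimes\cdots\boxtimes c}_{k}\boxtimes\underbrace{d\boxtimes\cdots\boxtimes d}_{\ell}$ then yields an $R$-matrix on $(\mathbb{C}^{n^{k} m^{\ell}})^{\otimes 2}$, so setting $\mathcal{V}=\mathbb{C}^{n^{k} m^{\ell}}$ produces the claimed solution $\tilde C$ with $\operatorname{dim}(\mathcal{V})=n^{k} m^{\ell}$. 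The point requiring a word of care is that the block partition is preserved and compatible under iteration, which is guaranteed because each intermediate factor is itself a Tracy-Singh product and hence inherits a well-defined partition into blocks all of equal size.
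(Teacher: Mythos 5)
Your proposal is correct and follows essentially the same route as the paper: part $(i)$ via the substitution of ceiling/residue indices into Proposition \ref{lem-box-same} together with the reindexing bijection $\ell \leftrightarrow (\lceil \ell/m\rceil,\bar\ell)$ that converts the single sum in Lemma \ref{lem-tilde-c-condition-for YBE} into the double sum of Equation (\ref{eqn-expression-box-itself}), part $(ii)$ via $(c\boxtimes d)^{*}=c^{*}\boxtimes d^{*}$ and Theorem \ref{thm-tracy}$(v)$, and the final clause via associativity of $\boxtimes$ applied to the iterated product. The only cosmetic difference is that you phrase the reindexing as a bijection up front while the paper verifies it by counting the $m$ values of $\ell$ with $\lceil \ell/m\rceil = u$; the content is identical.
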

\begin{proof}
		 We show that $c\boxtimes d$ satisfies the condition in Lemma \ref{lem-tilde-c-condition-for YBE}. Let $B_{ik}$, and  $B'_{pq}$ denote the blocks of $c$ and $d$ respectively.  
		From Proposition  \ref{lem-box-same},  for every $1 \leq i,k \leq n$,  $1 \leq ,p,q \leq m$:\\
	$(c \boxtimes d)\,(B_{ ik}\,\otimes\, B'_{pq}\,\otimes \,\mat{I}_{nm})\,(c \boxtimes d)\,=\,\sum\limits_{u=1}^{u=n}\,\sum\limits_{v=1}^{v=m}	\,	(B_{ iu}\,\otimes\, B'_{pv}\,\otimes \,\mat{I}_{nm})\,(c \boxtimes d)\,
	(B_{ u k}\,\otimes\, B'_{vq}\,\otimes \,\mat{I}_{nm})$.\\
	So, in particular, if we replace $i$ by $\lceil \frac{i}{m}\rceil$, $k$ by $\lceil \frac{k}{m}\rceil$, $p$ by $\bar{i}$ and $q$ by $\bar{k}$ in this equation:
\begin{gather*}
		(c \boxtimes d)\,(B_{ \lceil \frac{i}{m}\rceil\,\lceil \frac{k}{m}\rceil}\,\otimes\, B'_{\bar{i}\,\bar{k}}\,\otimes \,\mat{I}_{nm})\,(c \boxtimes d)\,=\\
		\sum\limits_{u=1}^{u=n}\,\sum\limits_{v=1}^{v=m}	\,	\,	(B_{ \lceil \frac{i}{m}\rceil\,u}\,\otimes\, B'_{\bar{i}\,v}\,\otimes \,\mat{I}_{nm})\,(c \boxtimes d)\,
		(B_{ u\,\lceil \frac{k}{m}\rceil}\,\otimes\, B'_{v\,\bar{k}}\,\otimes \,\mat{I}_{nm})
		\end{gather*}
	From  Lemma \ref{lem-tilde-c-condition-for YBE}, it remains to show that:
	\begin{gather}
	\sum\limits_{u=1}^{u=n}\,\sum\limits_{v=1}^{v=m}	\,	\,	(B_{ \lceil \frac{i}{m}\rceil\,u}\,\otimes\, B'_{\bar{i}\,v}\,\otimes \,\mat{I}_{nm})\,(c \boxtimes d)\,
	(B_{ u\,\lceil \frac{k}{m}\rceil}\,\otimes\, B'_{v\,\bar{k}}\,\otimes \,\mat{I}_{nm})\,= \nonumber\\
	\sum\limits_{\ell=1}^{\ell=nm}	\,	(B_{ \lceil \frac{i}{m}\rceil\,\lceil \frac{\ell}{m}\rceil}\,\otimes\, B'_{\bar{i}\,\bar{\ell}}\,\otimes \,\mat{I}_{nm})\,(c \boxtimes d)\,
	(B_{ \lceil \frac{\ell}{m}\rceil\,\lceil \frac{k}{m}\rceil}\,\otimes\, B'_{\bar{\ell}\,\bar{k}}\,\otimes \,\mat{I}_{nm})
	\label{eqn-chge-indices}
	\end{gather}
	There are exactly $m$ values of $\ell$, $1 \leq \ell \leq nm$,  for which $\lceil \frac{\ell}{m}\rceil $ is equal to a given $u$, with $1 \leq u\leq n$. Indeed, for every $\ell \in \{(u-1)m+1\,,\, (u-1)m+2, ...,\,(u)m  \}$, then $\lceil \frac{\ell}{m}\rceil \,=\,u$. Moreover, for each of these $m$  values of $\ell$, it holds that $\bar{\ell}$ is different, that is $\bar{\ell}$ achieves all the values $1,..,m$. So, we can replace in the equation above $\lceil \frac{\ell}{m}\rceil $ by  $u$ and $\bar{\ell}$ by $v$, with the  indices $u$ and $v$ running from $1$ to $n$ and $m$ respectively.   That is, Equation (\ref{eqn-chge-indices}) holds, which proves that $c \boxtimes d$ is an $R$-matrix.\\
 $(ii)$ We show that if $c$ and $d$ are unitary, then $(c \boxtimes d)(c \boxtimes d)^*\,=\,I_{n^2m^2}$. From  theorem \ref{thm-tracy}$(viii)$ and the properties of the complex conjugation,  $(c \boxtimes d)(c \boxtimes d)^*\,=  (c \boxtimes d)(c^* \boxtimes d^*)$.  From  theorem \ref{thm-tracy}$(v)$,  $(c \boxtimes d)(c^* \boxtimes d^*)= c c^*\boxtimes d d^* = I_{n^2} \boxtimes I_{m^2}=I_{n^2m^2}$, since the products $cc^*$ and $dd^*$ are defined and equal to  $I_{n^2}$ and  $I_{m^2}$ respectively.\\
  From Theorem \ref{thm-tracy} $(iii)$, the Tracy-Singh product is associative, so one can apply for example
 	$\underbrace{c \boxtimes c\boxtimes...c}_{k\, \textrm{times}}\,\boxtimes	\underbrace{d\boxtimes d\boxtimes...d}_{\ell\, \textrm{times}}$ and construct a solution  of the   Yang-Baxter equation,  $\tilde{C}: \mathcal{V}\otimes \mathcal{V}\rightarrow  \mathcal{V} \otimes \mathcal{V}$, with $\operatorname{dim}(\mathcal{V})=n^{k}m^{\ell}$, induced from $c$, $d$. 
\end{proof}
\hspace*{3mm} A question that arises   is what happens if we change the block partition of the matrices in  the Tracy-Singh product. The block partition in (\ref{eqn-c-blocks}) is very symmetric in the sense that all the blocks are square submatrices, and this is  the unique  one,  so in any other block partition there is no such symmetry.  From the test of several examples, it holds that the Tracy-Singh product of two $R$-matrices with a block partition different from the block partition in (\ref{eqn-c-blocks}) is not necessarily a  $R$-matrix. As an example, for $c$,   the  $R$-matrix from Example \ref{ex-permut-sol-2},  with the following partition:
	$c=	\begin{pmatrix}
	0 &	 \rvline & 0& 0 & \rvline &1\\
	0 &	 \rvline & 1& 0  	& \rvline &0\\
	\hline
	0 &	 \rvline & 0& 1 & \rvline &0\\
	1 &	 \rvline & 0& 0 	& \rvline &0\\
	\end{pmatrix}$.  If we denote $(c\boxtimes c)\otimes \mat{I}_{4}$ by $\hat{c}^{12}$  and $\mat{I}_{4}\otimes (c\boxtimes c)$ by $\hat{c}^{23}$,  with this block partition, then $c\boxtimes c$ is not an $R$-matrix.
	Indeed, $\hat{c}^{12}\hat{c}^{23}\hat{c}^{12} \,\neq \,\hat{c}^{23}\hat{c}^{12}\hat{c}^{23}$, since 
	the first row in $\hat{c}^{12}\hat{c}^{23}\hat{c}^{12}$ is  a zero row, while  $(\hat{c}^{23}\hat{c}^{12}\hat{c}^{23})_{1,18}\,=\,(\hat{c}^{23}\hat{c}^{12}\hat{c}^{23})_{1,26}\,=\,1$. 
\section{Connections with other constructions}
The Kronecker product  (or tensor product) of matrices is a fundamental concept in linear algebra and   the Tracy-Singh product of matrices is a  generalisation of it,   called sometimes the block Kronecker product, as  they  share many properties.  While the Kronecker product of two matrices  has a very natural interpretation, indeed  it represents the tensor product of the corresponding linear transformations, the Tracy-Singh product  is not much understood and it is not known whether it has a  general  interpretation.  However, in the special case that $A$ and $B$  of size $n^2\times p^2$ and $m^2\times q^2$ respectively,  with a partition into  blocks of the same size ($n\times p$ and $m\times q$ respectively),   we call  a  \emph{canonical partition},  
then $A \boxtimes B$ is the representing matrix of some operator (as described below).  Furthermore, if $A$ and $B$ are square matrices, then  $A \boxtimes B$ and $A \otimes B$ are similar matrices and the conjugating matrix is a permutation matrix, as described in Proposition \ref{prop-prop-box-cd}, which is a direct application of Theorem \ref{thm-tracy-K}.  
\begin{prop}\label{prop-prop-box-cd}
	Let $c$ and $d$ of size $n^2\times p^2$ and $m^2\times q^2$ respectively,  with a partition into  blocks of the same size ($n\times p$ and $m\times q$ respectively). 	Then		
	\begin{equation}\label{eqn-formula-ts-tensor-general}
	c \boxtimes d\,=\, 	(\mat{I}_{n}\,\otimes K_{mn} \,\otimes \mat{I}_{m})\,	\cdot\, (c\otimes d)\, \cdot\, (\mat{I}_{p}\,\otimes K_{pq}\,\otimes  \mat{I}_{q})
	\end{equation}
	If  $c$ and $d$  are square matrices, that is $n= p$ and $m=q$, then 
	\begin{enumerate}[(i)]
		\item $d \otimes c\,=\, K_{m^2n^2}\,\cdot\,(c \otimes d)\,\cdot\,K_{n^2m^2}$, where $K_{m^2n^2}\,=\,(K_{n^2m^2})^{-1}$.
		\item 	
		$c \boxtimes d\,=\, 	(\mat{I}_{n}\,\otimes K_{mn} \,\otimes \mat{I}_{m})\,	\cdot\, (c\otimes d)\, \cdot\, (\mat{I}_{n}\,\otimes K_{nm}\,\otimes  \mat{I}_{m})$
		
		\item 
		$d\boxtimes c\,=\, 	P \cdot\, (c\boxtimes d)\,\cdot\,P^{-1}$, where 
		$P=\, (\mat{I}_{m}\,\otimes K_{nm} \,\otimes \mat{I}_{n})\,	\cdot\, K_{m^2n^2}\,\cdot\, (\mat{I}_{n}\,\otimes K_{nm} \,\otimes \mat{I}_{m})$, where $ (\mat{I}_{n}\,\otimes K_{mn} \,\otimes \mat{I}_{m})^{-1}\,=\, (\mat{I}_{n}\,\otimes K_{nm}\,\otimes  \mat{I}_{m})$.
	\end{enumerate}
\end{prop}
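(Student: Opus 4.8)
The plan is to derive the entire proposition as a direct specialization of Theorem \ref{thm-tracy-K}, so that the only substantive work is the careful matching of indices together with a single factorwise inversion identity for commutation matrices. The starting observation is a size count: since $c$ is $n^2\times p^2$ with blocks of size $n\times p$, it has $n$ block rows, $p$ block columns and block dimensions $n'=n$, $s'=p$; likewise $d$ is $m^2\times q^2$ with $m$ block rows, $q$ block columns and block dimensions $m'=m$, $t'=q$.

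For the main formula (\ref{eqn-formula-ts-tensor-general}) I would apply the first bullet of Theorem \ref{thm-tracy-K}$(ii)$ with $A=c$ and $B=d$. Substituting the values above, the outer identities become $\mat{I}_{n}$ and $\mat{I}_{p}$, the commutation factors become $K_{un'}=K_{mn}$ and $K_{s'v}=K_{pq}$, and the inner identities become $\mat{I}_{m}$ and $\mat{I}_{q}$, so that $A\boxtimes B=(\mat{I}_{n}\otimes K_{mn}\otimes\mat{I}_{m})\,(c\otimes d)\,(\mat{I}_{p}\otimes K_{pq}\otimes\mat{I}_{q})$, which is exactly (\ref{eqn-formula-ts-tensor-general}). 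In the square case $n=p$, $m=q$, item $(ii)$ is immediate, being (\ref{eqn-formula-ts-tensor-general}) with $p$ replaced by $n$ and $q$ by $m$. Item $(i)$ is a direct instance of Theorem \ref{thm-tracy-K}$(i)$: with $A=c$ of size $n^2\times n^2$ and $B=d$ of size $m^2\times m^2$, the identity $B\otimes A=K_{mn}(A\otimes B)K_{st}$ reads $d\otimes c=K_{m^2n^2}(c\otimes d)K_{n^2m^2}$, and $K_{m^2n^2}=(K_{n^2m^2})^{-1}$ is the defining property of the commutation matrix recorded in Definition \ref{defn-commut}.

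For item $(iii)$ I would invoke the second bullet of Theorem \ref{thm-tracy-K}$(ii)$, namely $d\boxtimes c=P\cdot(c\boxtimes d)\cdot Q$, and substitute the same parameters. The expression for $P$ collapses to $(\mat{I}_{m}\otimes K_{nm}\otimes\mat{I}_{n})\cdot K_{m^2n^2}\cdot(\mat{I}_{n}\otimes K_{nm}\otimes\mat{I}_{m})$, matching the statement. The only point requiring a computation is that $Q=P^{-1}$. Substituting the parameters into the formula for $Q$ gives $Q=(\mat{I}_{n}\otimes K_{mn}\otimes\mat{I}_{m})\cdot K_{n^2m^2}\cdot(\mat{I}_{m}\otimes K_{mn}\otimes\mat{I}_{n})$. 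On the other hand, computing $P^{-1}$ directly and using that Kronecker products invert factorwise together with $K_{ab}^{-1}=K_{ba}$ — so that $(\mat{I}_{a}\otimes K_{bc}\otimes\mat{I}_{d})^{-1}=\mat{I}_{a}\otimes K_{cb}\otimes\mat{I}_{d}$ — while reversing the order of the three factors, reproduces precisely this expression for $Q$. Hence $Q=P^{-1}$ and $d\boxtimes c=P(c\boxtimes d)P^{-1}$.

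The main obstacle is purely notational rather than conceptual: the generic symbols $n,m,p,q,s,t$ of Theorem \ref{thm-tracy-K} clash with the $n,m,p,q$ of the proposition, so the substitution must be performed with care to avoid transposing a commutation index. Once the identifications are fixed, the only genuine calculation is the factorwise inversion establishing $Q=P^{-1}$, and no new machinery beyond Theorem \ref{thm-tracy-K} and the identity $K_{ab}^{-1}=K_{ba}$ is needed.
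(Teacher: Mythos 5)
Your proposal is correct and follows exactly the route the paper intends: the paper gives no written proof, stating only that the proposition ``is a direct application of Theorem \ref{thm-tracy-K}'', and your substitutions ($n'=n$, $s'=p$, $m'=m$, $t'=q$, with $n$, $p$, $m$, $q$ block rows and columns) reproduce each item faithfully. Your explicit factorwise-inversion check that $Q=P^{-1}$ in the square case is the one genuinely nontrivial step --- the paper itself remarks after Theorem \ref{thm-tracy-K} that $P$ and $Q$ are \emph{not} mutually inverse in general --- and you carry it out correctly, so nothing is missing.
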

In \cite{gandal1,gandal2}, the authors describe several algebraic operations on the set of $R$-matrices. They name one of these operations   \emph{the tensor product of $R$-matrices}, although it differs from the actual tensor product $\otimes$, and they  denote it by $\boxtimes$. For  $R$-matrices $c: (\mathbb{C}^n )^{\otimes 2} \rightarrow (\mathbb{C}^n )^{\otimes 2} $ and $d:(\mathbb{C}^m )^{\otimes 2}\rightarrow (\mathbb{C}^m )^{\otimes 2}$, their \emph{tensor product}    $c \boxtimes d\, \in \operatorname{End}(\mathbb{C}^n \otimes \mathbb{C}^m \otimes \mathbb{C}^n \otimes \mathbb{C}^m )$:
\begin{equation}\label{eqn-defn-box-gandal}
c \boxtimes d\,=\, F_{23}\,(c \otimes d)\,F_{23}
\end{equation}
where $F_{23}:\,\mathbb{C}^n \otimes \mathbb{C}^m \otimes \mathbb{C}^n \otimes \mathbb{C}^m \,\rightarrow \,\mathbb{C}^n \otimes \mathbb{C}^n \otimes \mathbb{C}^m \otimes \mathbb{C}^m$ is the flip unitary exchanging the two middle factors \cite[Section 4]{gandal1}.  A technical computation, suggested by G.  Lechner,  shows that 
the matrix $ \mat{I}_{n}\,\otimes K_{mn} \,\otimes \mat{I}_{m}$ in Proposition \ref{prop-prop-box-cd} $(ii)$ represents the  flip map $F_{23}$. That is,   in the special case that  $A$ and $B$ are square matrices  of size $n^2$ and $m^2$ respectively, with the canonical partition into blocks,  their  Tracy-Singh product represents the operator $c \boxtimes d\,=\, F_{23}\,(c \otimes d)\,F_{23}$, where $A$ and $B$ represent the operators $c: (\mathbb{C}^n )^{\otimes 2} \rightarrow (\mathbb{C}^n )^{\otimes 2} $ and $d:(\mathbb{C}^m )^{\otimes 2}\rightarrow (\mathbb{C}^m )^{\otimes 2}$,  respectively. Moreover, if  $c$ and $d$ are $R$-matrices,  $c \boxtimes d$ is also an $R$-matrix in both contexts. In \cite{majid-book}, S. Majid refers to the cabling operation  of $R$-matrices.
\begin{rem}\label{rem-ts-general-matrices}
	More generally,  if $A$ and $B$  of size $n^2\times p^2$ and $m^2\times q^2$ respectively,  with a canonical partition,   then from Equation (\ref{eqn-formula-ts-tensor-general}), one can show (a  technical computation) that  their  Tracy-Singh product represents also a linear transformation  $c\boxtimes d:\,\mathbb{C}^n \otimes \mathbb{C}^m \otimes \mathbb{C}^n \otimes \mathbb{C}^m\,\rightarrow\mathbb{C}^p \otimes \mathbb{C}^q \otimes \mathbb{C}^p\otimes \mathbb{C}^q$ of the form $F_{23}\,(c \otimes d)\,F_{23}$, where $c: (\mathbb{C}^n )^{\otimes 2} \rightarrow (\mathbb{C}^p )^{\otimes 2} $ and $d:(\mathbb{C}^m )^{\otimes 2}\rightarrow (\mathbb{C}^q )^{\otimes 2}$, and $F_{23}$  exchanges the two middle factors. 
\end{rem}
The Tracy-Singh product of matrices  has  also  a surprising connection with a categorical construction that we describe in the following. Indeed, it can be defined as the monoidal product (or a tensor  functor) in a particular category of vector spaces, in which the canonical partition into blocks is ensured. Before we get into details,  we give some definitions and refer to the books \cite{book-turaev}, \cite{book-tensor}, \cite{etingof-mit},  \cite{majid-book} and to \cite{deligne}, \cite{lopez} for more details. 

An abelian category $\mathcal{C}$ is  \emph{$\mathbb{C}$-linear}
if for  every  $X,Y \in \mathcal{C}$,  $\operatorname{Hom }_{\mathcal{C}}(X,Y)$  is a $\mathbb{C}$-vector space, and composition of morphisms is bilinear. A $\mathbb{C}$-linear abelian category $\mathcal{C}$ is \emph{locally finite} if  for every  $X,Y \in \mathcal{C}$, the vector space $\operatorname{Hom }_{\mathcal{C}}(X,Y)$
is finite dimensional over $\mathbb{C}$ and there exists a filtration $\{0\}=X_0\subset X_1\subset ...\subset X_{n-1}\subset X_n\,=X$
such that for every $i$, $X_i/X_{i-1}$  has only $\{0\}$ and itself as subojects \cite[Sections 1.3,1.11, 4.6]{book-tensor}. A locally finite $\mathbb{C}$-linear abelian rigid monoidal category $\mathcal{C}$ is a \emph{multitensor category over $\mathbb{C}$},  if the bifunctor $\otimes:  \mathcal{C} \times \mathcal{C} \rightarrow \mathcal{C}$ is bilinear on morphisms. 

In \cite{deligne}, P. Deligne defines a product of categories, denoted by $\boxtimes$ (also), that is called nowadays the \emph{Deligne's tensor product}. We denote it by $\boxtimes_P$ to differentiate it from the others.  Deligne's tensor product is not always defined, but if  $\mathcal{C}$ and $\mathcal{D}$ are  locally finite $\mathbb{C}$-linear abelian categories or multitensor categories, 
then  the Deligne's tensor  product of  $\mathcal{C}$ and $\mathcal{D}$ exists and  is also  a locally finite $\mathbb{C}$-linear abelian category or a multitensor category \cite{deligne,book-tensor,lopez}. Whenever
$\mathcal{C}$ and $\mathcal{D}$ are  multitensor categories, the  category $\mathcal{C} \boxtimes_P \mathcal{D}$ is monoidal with tensor product defined,   for every $C_1,C_2 \in \operatorname{Ob}(\mathcal{C})$ and  $D_1,D_2 \in \operatorname{Ob}(\mathcal{D})$:
\begin{gather*}
\hat{\otimes}:\,\mathcal{C} \boxtimes_P \mathcal{D} \times \mathcal{C} \boxtimes_P \mathcal{D}  \rightarrow \mathcal{C}\boxtimes_P\mathcal{D}\\
(C_1\boxtimes_P D_1)\hat{\otimes} (C_2\boxtimes_P D_2)\,=\,(C_1\,\otimes_{\mathcal{C}}\,C_2) \, \boxtimes_P     \,(D_1\,\otimes_{\mathcal{D}}\,D_2)
\end{gather*}
with  $\operatorname{Hom }_{\mathcal{C}\boxtimes_P\mathcal{D}}(C_1\boxtimes_P D_1,C_2\boxtimes_P D_2)\, \simeq \, \operatorname{Hom }_{\mathcal{C}}(C_1,C_2)\,\otimes \,\operatorname{Hom }_{\mathcal{D}}(D_1,D_2)$ \cite[Section 4.6]{book-tensor}.

The category $\operatorname{Vec}$ is a symmetric monoidal category, with   objects   all the finite dimensional vector spaces over a fixed field, let's say $\mathbb{C}$, and  morphisms the  linear transformations between vector spaces. The monoidal  product is the functor $\otimes:\, \operatorname{Vec}\times \operatorname{Vec}\,\rightarrow \, \operatorname{Vec}$ that sends  each pair of objects $(U,V)$ to $U\otimes V$ and each pair of morphisms $(f,g)$ to $f \otimes g$, and its unit object is  $\mathbb{C}$. 
That $\otimes$ is a functor means that $Id_X \otimes  Id_Y = Id_{X\otimes Y}$,  for all $X,Y \in \operatorname{Vec}$,  and 
$(g \otimes g')(f \otimes  f ') \,= \,gf \otimes g'f' $, for all pairs of composable morphisms $g,f$ and $g',f'$ in $\operatorname{Vec}$, properties satisfied by the tensor product. The braiding is the flip map $\tau = \,\{\tau_{X,Y} :\, X \otimes  Y\,\rightarrow\,Y \otimes X \}_{X,Y \in \operatorname{Vec}}$, which is a symmetry. 

Moreover,  $\operatorname{Vec}$  is  a multitensor category, and in fact it is a fusion category which is an even stronger property \cite[Lecture 3, p.32]{etingof-mit}. As such,  Deligne's tensor  product of  $\operatorname{Vec}$  with itself, $\operatorname{Vec}\,\boxtimes_P\,\operatorname{Vec}$ exists and  is also  a  multitensor category \cite[Section 4.6]{book-tensor}. In this particular case,  $\operatorname{Vec}\,\boxtimes_P \operatorname{Vec}$ is in fact $\operatorname{Vec}\otimes\operatorname{Vec}$   \cite[Lecture 9, p.90]{etingof-mit}, an additive category satisfying   $\operatorname{Hom }_{\operatorname{Vec}\,\otimes \operatorname{Vec}}(U\otimes V,\,U'\otimes V')\, =\, \operatorname{Hom }_{\operatorname{Vec}}(U,U')\,\otimes \,\operatorname{Hom }_{\operatorname{Vec}}(V,V') $  \cite[Section 1.11]{book-tensor}. Additionally, it is monoidal with tensor product defined,   for every $U_1,V_1,U_2,V_2 \, \in \operatorname{Ob}(\operatorname{Vec})$ and for every linear transformation $f_i:\,U_i\rightarrow U'_{i}$, $g_i \,V_i\rightarrow V'_{i}$, $i=1,2$,   as in  (\ref{eqn-defn-hat-tensor}):
\begin{gather}\label{eqn-defn-hat-tensor}
\hat{\otimes}:\, \operatorname{Vec}\otimes\operatorname{Vec} \,\times \operatorname{Vec}\otimes\operatorname{Vec}\,\rightarrow \operatorname{Vec}\otimes\operatorname{Vec}\\
((U_1\otimes V_1),(U_2\otimes V_2)) \,\mapsto\,(U_1\,\otimes U_2) \, \otimes     \,(V_1\,\otimes\,V_2) \nonumber\\
((f_1\otimes g_1),(f_2\otimes g_2)) \,\mapsto\,(f_1\,\otimes f_2) \, \otimes     \,(g_1\,\otimes\,g_2) \nonumber
\end{gather}
This defines a structure of  monoidal category on the subcategory of  $\operatorname{Vec}\otimes\operatorname{Vec}$  consisting of  $\otimes$-decomposable objects of the form $U \otimes V$, which extends  to the entire category $\operatorname{Vec}\otimes\operatorname{Vec}$  \cite[Lecture 9, p.90]{etingof-mit}.  The category $\operatorname{Vec}\otimes\operatorname{Vec}$ is symmetric with braiding  the flip map inherited from $\operatorname{Vec}$, indeed 
$(U_1\otimes V_1)\otimes (U_2\otimes V_2)\,\simeq\,(U_2\,\otimes V_2) \, \otimes     \,(U_1\,\otimes\,V_1)$, for every $U_1,V_1,U_2,V_2 \, \in \operatorname{Vec}$.  We want to concentrate on a particular subcategory of  $\operatorname{Vec}\otimes\operatorname{Vec}$:
\[\mathcal{Diag}\,=\,\{U\otimes U \,\mid \, U\in \operatorname{Vec}\}\subset\,\operatorname{Vec}\otimes\operatorname{Vec}\]
As  $\operatorname{Hom }_{\mathcal{Diag}}\,(U\otimes U,V\otimes V)\, =\, \operatorname{Hom }_{\operatorname{Vec}\otimes\operatorname{Vec}}\,(U\otimes U,V\otimes V)$,  for every $U, V\in \operatorname{Vec}$, $\mathcal{Diag}$ is a full subcategory. Moreover, it  is  (symmetric) monoidal, since it   is closed under the  tensor  $\hat{\otimes}\mid_{\mathcal{Diag}}$ and   the unit object $\mathbb{C}\otimes\mathbb{C}$ belongs to  it. So,  $\mathcal{Diag}\,\times \, \mathcal{Diag}$ is also a   category, as it is the cartesian product of categories and it has a unit object  $(\mathbb{C}\otimes\mathbb{C},\mathbb{C}\otimes\mathbb{C})$.  It is (symmetric) monoidal with tensor the application of $\hat{\otimes}$ on each coordinate.

We are ready now to present the connection between categories and the Tracy-Singh product of matrices. As we recall,   to apply the Tracy-Singh product on matrices they have to be partitioned into blocks. If the matrices have arbitrary sizes, they cannot be   necessarily partitioned into blocks of the same size. However, if a matrix  $A$ has size of the form $n^ 2\times p^2$,  $n,p$ not necessarily different, then there is a  canonical partition of  $c$, where all the  blocks have  the same size $n\times p$.   The existence of such a canonical partition is  ensured for representing matrices of morphisms  in $\mathcal{Diag}$, but not  in $\operatorname{Vec}\otimes\operatorname{Vec}$ in general.  We show  that
the Tracy-Singh product of the representing matrices  (with respect to standard bases for example) of morphisms  $c$ and $d$ in the category $\mathcal{Diag}$  is a functor, and  it coincides with $\hat{\otimes}$ on $\mathcal{Diag}\,\times \,\mathcal{Diag}$.  That is, it can be defined as the monoidal product  in  $\mathcal{Diag}$.
\begin{prop}
	Let $W\otimes W,\,Z\otimes Z,\,W'\otimes W',\,Z'\otimes Z'  \in \operatorname{Ob}(\mathcal{Diag})$ of dimensions $n^2$,  $m^2$,  $p^2$,  $q^2$ respectively.   Let $c:\,W\otimes W\rightarrow W'\otimes W'$, $d:\,Z\otimes Z \rightarrow Z' \otimes Z'$ be   morphisms in $\mathcal{Diag}$. 
	Let $c\boxtimes d$ denote the linear transformation obtained from the Tracy-Singh product of the representing matrices of $c$ and $d$, where $c$ and $d$  are  partitioned into blocks of the same size $n\times p$ and $m\times q$, respectively. 
	\begin{gather*}
	\textrm{Let}\;\, F_{\boxtimes}: \,\mathcal{Diag}\,\times \,\mathcal{Diag}\,\rightarrow \,\mathcal{Diag}\,\; \textrm{be defined by:}\\
	(W\otimes W,Z\otimes Z)\,\mapsto\, (W\otimes Z)\,\otimes\,(W\otimes Z)\\
	(c,d)\,\mapsto\, c\boxtimes \,d 
	\end{gather*}
	Then $F_{\boxtimes}$ is a functor equal to the restriction of $\hat{\otimes}$ on $\mathcal{Diag}\,\times \,\mathcal{Diag}$.  Furthermore, in the special case that $\operatorname{dim}(W)\,=\,\operatorname{dim}(W')$, $\operatorname{dim}(Z)\,=\,\operatorname{dim}(Z')$,   if $c$ and $d$ are $R$-matrices, then $F_{\boxtimes}(c,d)$ is also a $R$-matrix,  and if $c$ and $d$ are unitary matrices, then $F_{\boxtimes}(c,d)$ is also a unitary matrix.  
\end{prop}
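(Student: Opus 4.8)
The plan is to treat the two assertions in turn. For the functoriality claim I would show that $F_{\boxtimes}$ literally coincides with the restriction $\hat{\otimes}\mid_{\mathcal{Diag}\times\mathcal{Diag}}$; since $\hat{\otimes}$ is already the (monoidal) product on $\operatorname{Vec}\otimes\operatorname{Vec}$, $\mathcal{Diag}$ is a full subcategory closed under $\hat{\otimes}$, and the image of $\mathcal{Diag}\times\mathcal{Diag}$ lands in $\mathcal{Diag}$, the functoriality of $F_{\boxtimes}$ (preservation of identities and of composition) is then inherited automatically. On objects the agreement is immediate, as $F_{\boxtimes}(W\otimes W,Z\otimes Z)=(W\otimes Z)\otimes(W\otimes Z)=\hat{\otimes}(W\otimes W,Z\otimes Z)\in\operatorname{Ob}(\mathcal{Diag})$, so the whole content is the agreement on morphisms.

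To match the two functors on morphisms I would reduce to simple tensors. Every $c\colon W\otimes W\to W'\otimes W'$ in $\mathcal{Diag}\subset\operatorname{Vec}\otimes\operatorname{Vec}$ belongs to $\operatorname{Hom}(W,W')\otimes\operatorname{Hom}(W,W')$, hence can be written $c=\sum_{\alpha}a_{\alpha}\otimes b_{\alpha}$, and likewise $d=\sum_{\beta}e_{\beta}\otimes h_{\beta}$. As the canonical (equal-size) partition of a matrix depends only on its size, and both $\boxtimes$ (by Theorem \ref{thm-tracy}(iv),(vi)) and $\hat{\otimes}$ are bilinear on morphisms, it suffices to treat $c=a\otimes b$ and $d=e\otimes h$, whose representing matrices are the Kronecker products $A\otimes B$ and $E\otimes H$. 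Writing $A\otimes B=(a_{ij}B)_{ij}$ shows that the canonical partition of $A\otimes B$ is exactly its natural block structure, and a direct comparison of the two lexicographic block orderings gives the identity
\[
(A\otimes B)\boxtimes(E\otimes H)=(A\otimes E)\otimes(B\otimes H),
\]
whose right-hand side is the representing matrix of $(a\otimes e)\otimes(b\otimes h)=\hat{\otimes}(a\otimes b,e\otimes h)$. Equivalently, this is precisely Remark \ref{rem-ts-general-matrices}: the Tracy-Singh product represents $F_{23}\,(c\otimes d)\,F_{23}$, and conjugation of $c\otimes d$ by the middle-factor flip $F_{23}$ is exactly the reorganization defining $\hat{\otimes}(c,d)$.

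Having identified $F_{\boxtimes}$ with $\hat{\otimes}\mid_{\mathcal{Diag}\times\mathcal{Diag}}$, functoriality follows, but I would also record the direct verification for clarity: $F_{\boxtimes}$ sends the pair of identities to $\mat{I}_{n^2}\boxtimes\mat{I}_{m^2}=\mat{I}_{n^2m^2}$ by Theorem \ref{thm-tracy}(ix), and it respects composition because $(c_2c_1)\boxtimes(d_2d_1)=(c_2\boxtimes d_2)(c_1\boxtimes d_1)$ by Theorem \ref{thm-tracy}(v) (alternatively from $F_{23}^{2}=Id$ together with functoriality of $\otimes$). The point requiring care is that the canonical partitions of composable matrices align, so that the column blocks of the left factor match the row blocks of the right factor before Theorem \ref{thm-tracy}(v) is applied; this is guaranteed because all the objects involved lie in $\mathcal{Diag}$, where the equal-size block partition is uniquely determined by the dimensions.

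Finally, in the square case $\operatorname{dim}(W)=\operatorname{dim}(W')$ and $\operatorname{dim}(Z)=\operatorname{dim}(Z')$ the matrices $c$ and $d$ are square and $F_{\boxtimes}(c,d)=c\boxtimes d$, so the two closure statements are exactly Theorem \ref{theo-tracy-unitary}: part (i) yields that $c\boxtimes d$ is an $R$-matrix when $c$ and $d$ are, and part (ii) yields that $c\boxtimes d$ is unitary when $c$ and $d$ are. I expect the main obstacle to be not conceptual but bookkeeping: making the correspondence between the canonical block partition, the Kronecker ordering of the standard tensor basis, and the middle-factor flip $F_{23}$ fully precise in the morphism-level identity. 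Everything else reduces cleanly to results already in hand.
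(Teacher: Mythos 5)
Your proposal is correct and follows essentially the same route as the paper: agreement on objects is immediate, the morphism-level identity is reduced to simple tensors $c=\sum_\alpha a_\alpha\otimes b_\alpha$, $d=\sum_\beta e_\beta\otimes h_\beta$ by bilinearity and then settled via the middle-factor flip interpretation of Remark \ref{rem-ts-general-matrices}, and the $R$-matrix and unitarity claims are delegated to Theorem \ref{theo-tracy-unitary}. Your explicit block identity $(A\otimes B)\boxtimes(E\otimes H)=(A\otimes E)\otimes(B\otimes H)$ and the direct check of identities and composition via Theorem \ref{thm-tracy} are welcome refinements---they make concrete the ``technical computation'' the paper leaves implicit---but they do not change the substance of the argument.
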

\begin{proof}
	First, from Equation (\ref{eqn-defn-hat-tensor}), 
	the  action of  $F_{\boxtimes}$ on $\operatorname{Ob}(\mathcal{Diag}\times \mathcal{Diag})$ clearly coincides with that of $\hat{\otimes}$. Next,  it is enough to  show that $\hat{\otimes}$ and $F_{\boxtimes}$ coincide in $\operatorname{Mor}(\mathcal{Diag}\times \mathcal{Diag})$  on morphisms of the form $(f_1\otimes g_1, f_2\otimes g_2)$, where $f_1,g_1:W\rightarrow W'$ and $f_2,g_2:Z\rightarrow Z'$, since $c=\,\sum\limits_{i}^{}\,f^i_1\otimes g^i_1$ and $d=\,\sum\limits_{j}^{}f^j_2\otimes g^j_2$, with  $f^i_1,g^i_1:W\rightarrow W'$ and $f^j_2,g^j_2:Z\rightarrow Z'$. By  substituting  in  (\ref{eqn-defn-hat-tensor}), $U_1=V_1=W$, $U_2=V_2=Z$, $U'_1=V'_1=W'$, $U'_2=V'_2=Z'$, $f_1,g_1:W\rightarrow W'$ and $f_2,g_2:Z\rightarrow Z'$,  we have 
	$(f_1\otimes g_1)\,\hat{\otimes}\,(f_2\otimes g_2) \,=\,(f_1\,\otimes f_2) \, \otimes     \,(g_1\,\otimes\,g_2)$, a linear map  that   sends $w\otimes z \otimes w_.\otimes z_.$  to  $f_1(w)\otimes f_2(z) \otimes g_1(w_.)\otimes g_2(z_.)$,  for every $w,w_. \in W$ and $z,z_.\in Z$. From  Remark  \ref{rem-ts-general-matrices},  the linear map $(f_1\otimes g_1)\,\boxtimes\,(f_2\otimes g_2)$ sends $w\otimes z \otimes w_.\otimes z_.$  to 	$(F_{23}\,((f_1\otimes g_1)\,\otimes\,(f_2\otimes g_2))\,F_{23})\,(w\otimes z \otimes w_.\otimes z_.)\,=\, ...\,=f_1(w)\otimes f_2(z) \otimes g_1(w_.)\otimes g_2(z_.)$.  That is, $F_{\boxtimes}$ is  equal to the restriction of $\hat{\otimes}$ on $\mathcal{Diag}\,\times \,\mathcal{Diag}$. The other properties are derived  from Theorem \ref{theo-tracy-unitary}.		
\end{proof}
\part{Quantum computation and construction of entangling $2$-qudit gates}
\section{Preliminaries on quantum computing and quantum entanglement}
We follow the presentation  from the reference books on the topic \cite{book-quantum1},  \cite{book-quantum2}  and the papers  \cite{bryl}, \cite{kauf-lo1,kauf-lo1',kauf-lo4,kauf-lo3,kauf-meh} and  we refer to  these references  and the vast literature for more details. 
\begin{defn}\label{defn-qubit}
	Let $\mathbb{C}^2$ be the two-dimensional Hilbert space with two orthonormal state vectors, denoted by  $\mid 0\rangle$ and $\mid 1\rangle$, that  form a basis in bijection with the standard basis $\{(1,0),(0,1)\}$.
	A \emph{qubit (or quantum bit)} is a  state vector in $\mathbb{C}^2$
	\begin{equation}\label{eqn-qubit}
	\mid \phi \rangle=\, \alpha\,\mid 0\rangle+\beta\,\mid 1\rangle 
	\end{equation}
	where $\alpha, \beta \in \mathbb{C}$ and $\mid\alpha\mid^2 +\mid\beta\mid^2 =1$.
	We say that any linear combination of the form (\ref{eqn-qubit}) is a \emph{superposition of the states $\mid 0\rangle$ and $\mid 1\rangle$},  with \emph{amplitude} $\alpha$ for the state  $\mid 0\rangle$ and $\beta$ for the state $\mid 1\rangle$. 
\end{defn}
Intuitively, the states $\mid 0\rangle$ and $\mid 1\rangle$ are analogous to the two values $0$ and $1$ which a bit may take. The way a qubit differs from a bit is that superposition of these two states, of the form (\ref{eqn-qubit}), can also exist, in which it is not possible to say that the qubit is definitely in the state $\mid 0\rangle$ or  definitely in the state  $\mid 1\rangle$. A measurement of a qubit $\mid \phi \rangle=\alpha\,\mid 0\rangle+\beta\,\mid 1\rangle$ provides as output the bit $0$ with probability $\mid\alpha\mid^2 $ and the bit $1$ with probability $\mid\beta\mid^2 $ and the state $\mid \phi \rangle$ collapses to $\mid 0 \rangle$ or $\mid 1 \rangle$. After the measurement, all the information about the superposition is irreversibly lost. Examples of qubits  include the spin of the electron in which the two basis states are  spin up and spin down,  and in this case the basis is denoted by $\{\,\mid \uparrow\rangle\,,\,\mid \downarrow\rangle\,\}$; or the polarization of a single photon in which the two basis  states are  vertical and horizontal,   and in this case the basis is denoted by $\{\,\mid \rightarrow\rangle\,,\,\mid \leftarrow\rangle\,\}$.

More generally, a \emph{$n$-qubit} is a  state vector in the  $2^n$-dimensional Hilbert space, with an orthonormal basis  $\{\,\mid \psi_i\rangle\;\mid 1 \leq i \leq 2^n\}$ in bijection with the standard basis, of the form
\begin{equation}\label{eqn-n-qubit}
\mid \phi \rangle=\, \sum\limits_{i=1}^{i=2^n}\alpha_i\,\mid \psi_i\rangle
\end{equation}
where $\alpha_i \in \mathbb{C}$, $ 1 \leq i \leq 2^n$,  and $\sum\limits_{i=1}^{i=2^n}\mid\alpha_i\mid^2 =1$. As an example, a two-qubit has the form \begin{equation}\label{eqn-2-qubit}
\mid \phi \rangle=\, \alpha_{00}\,\mid 00\rangle+\alpha_{01}\,\mid 01\rangle+\alpha_{10}\,\mid 10\rangle+\alpha_{11}\,\mid 11\rangle
\end{equation}
In analogy with  the case of a qubit, a measurement of a $n$-qubit  of the form (\ref{eqn-n-qubit}) gives as outcome $n$ bits, each $n$-tuple of bits with  a precalculated probability. Moreover,  the state  $\mid \phi \rangle$, in the superposition of the  $2^n$ basis states,   collapses to just one of the basis states.  
\begin{defn}\label{defn-qudit}
	Let $\mathbb{C}^d$ be the d-dimensional Hilbert space with orthonormal base denoted by  $\mid 0\rangle$, $\mid 1\rangle$, ..., and $\mid d-1\rangle$.
	A \emph{qudit} is a  state vector in $\mathbb{C}^d$, 	where $\alpha_i \in \mathbb{C}$ and $\sum\limits_{i=1}^{i=d}\mid\alpha_i\mid^2  =1$:
	\begin{equation}\label{eqn-qudit}
	\mid \phi \rangle=\, \alpha_1\,\mid 0\rangle+...+\alpha_d\,\mid d-1\rangle
	\end{equation}
	Any linear combination of the form (\ref{eqn-qudit}) is a \emph{superposition of the states $\mid 0\rangle$,...,  $\mid d- 1\rangle$}.
\end{defn}
A qubit is a special case of a qudit, for the case  $d=2$. A \emph{$n$-qudit} is a state vector in the Hilbert space $(\mathbb{C}^d)^{\otimes n}$.
A quantum system with one  state vector $\mid \phi \rangle$ is called a \emph{pure state}. However, it is also possible for a system to have a set of potential different state vectors. As an example, there may be a  probability  $\frac{1}{2}$ that the state vector is 
$\mid \phi \rangle$ and a  probability  $\frac{1}{2}$ that the state vector is 
$\mid \psi \rangle$. This system is said to  be in a \emph{mixed state}. There exists a   matrix called density matrix which trace value determines whether a system is in a pure or a mixed state \cite[p.99]{book-quantum1}.
\begin{defn}
	A $n$-qudit $\mid \phi \rangle$ is   \emph{decomposable} if $\mid \phi \rangle\,=\,\mid \phi_1 \rangle\,\otimes\,\mid \phi_2 \rangle\,\otimes\,...\otimes\,\mid \phi_n \rangle$, where 
	$\mid \phi_i \rangle\,\in \, \mathbb{C}^d$, for $ 1 \leq i \leq n$.  Otherwise, $\mid \phi \rangle$ is  \emph{entangled}.  
\end{defn} 
As an example, $\mid \phi^+ \rangle \,=\, \frac{1}{\sqrt{2}}\,(\,\mid 00\rangle\,+\,\mid 11\rangle\,)$ is an entangled two-qubit,  since it cannot be decomposed as a tensor product of two qubits. In general, a two-qubit pure state $\mid \phi \rangle=\, \alpha_{00}\,\mid 00\rangle+\alpha_{01}\,\mid 01\rangle+\alpha_{10}\,\mid 10\rangle+\alpha_{11}\,\mid 11\rangle$ is entangled if and only if $\alpha_{00}\alpha_{11}-\alpha_{01}\alpha_{10}\,\neq 0$. In \cite{kauf-lo4}, the authors give a combinatorial criteria to determine whether a $n$-qubit is entangled or not.

Any quantum evolution of a  $n$-qudit, or any quantum operation on a $n$-qudit is described by a unitary operator or  square unitary matrix of size $d^n$, called a \emph{(quantum)  $n$-qudit  gate}, which transforms $\mid \phi \rangle\,=\, \sum\limits_{i=1}^{i=d^n}\alpha_i\,\mid \psi_i\rangle$ into another linear combination of  the (standard) basis states $\{\, \mid \psi_i\rangle\;\mid 1 \leq i \leq d^n\}$.
\begin{ex}\label{ex-c-ybe-2qubits}
	Let $c$ be the unitary $R$-matrix from Example \ref
	{ex1-kaufman}. So, $c$  is a 2-qubit gate and it   acts on the basis states  $\{\,\mid 00\rangle,\,\mid 01\rangle,\,\mid 10\rangle,\,\mid 11\rangle\,\}$ of $(\mathbb{C}^2)^{ \otimes 2}$,  with $\mid 00\rangle\,=\,\mid 0\rangle\,\otimes\,\mid 0\rangle$, $\mid 01\rangle\,=\,\mid 0\rangle\,\otimes\,\mid 1\rangle$, and so on, such  that $c\,\mid 00\rangle\,=\,\frac{1}{\sqrt{2}}\mid 00\rangle\,-\,\frac{1}{\sqrt{2}}\,\mid 11\rangle$, $c\,\mid 01\rangle\,=\,\frac{1}{\sqrt{2}}\mid 01\rangle\,+\,\frac{1}{\sqrt{2}}\,\mid 10\rangle$,  $c\,\mid 10\rangle\,=\,\,-\frac{1}{\sqrt{2}}\mid 01\rangle\,+\,\frac{1}{\sqrt{2}}\,\mid 10\rangle$
	and $c\,\mid 11\rangle\,=\,\frac{1}{\sqrt{2}}\mid 00\rangle\,+\,\frac{1}{\sqrt{2}}\,\mid 11\rangle$.
\end{ex}
\begin{ex}\label{ex-cnot-2qubit}
	A very important example of  2-qubit gate is the following   unitary square matrix of size $4$ which acts on $2$-qubits: 
	$CNOT=\begin{pmatrix}
	1& 0 & 0 & 	0\\
	0 & 1& 0 & 0 \\
	0 &	0 & 0& 1 \\
	0&0 & 1 &	0\\
	\end{pmatrix}$;  not an $R$-matrix.
\end{ex}
\begin{defn} \cite{bryl}
	A $2$-qudit gate $L:\,(\mathbb{C}^d)^{\otimes 2}\,\rightarrow\,(\mathbb{C}^d)^{\otimes 2}$ is \emph{primitive} if $L$ maps decomposable $2$-qudit to decomposable $2$-qudit, otherwise $L$ is  said to be \emph{imprimitive}.
\end{defn}
In other words, a $2$-qudit gate  $L$ is said to be \emph{imprimitive}, if there exists a decomposable $2$-qudit $\mid \phi \rangle$ such that $L\,\mid \phi \rangle$ is an entangled $2$-qudit. An imprimitive $2$-qudit gate is often called \emph{entangling}, as in \cite{kauf-lo3}. There is a criteria to determine whether a $2$-qudit is primitive.
\begin{thm}\cite{bryl}\label{thm-bryl-criteria}
	Let $P:\,(\mathbb{C}^d)^{\otimes 2}\,\rightarrow\,(\mathbb{C}^d)^{\otimes 2}$ denote the swap gate, that is the $2$-qudit gate such that $P\mid \alpha\beta \rangle\,=\,\mid \beta\alpha\rangle$.  Let  $L:\,(\mathbb{C}^d)^{\otimes 2}\,\rightarrow\,(\mathbb{C}^d)^{\otimes 2}$ be a  $2$-qudit gate. Then $L$  is primitive if  and only if $L=\,L_1\otimes L_2$ or  $L=\,(L_1\otimes L_2)\,P$,  for some $1$-qudit gates $L_1$, $L_2$.
\end{thm}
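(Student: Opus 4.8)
The plan is to reduce the statement to the classical classification of linear maps that preserve matrices of rank one, and then to translate the two resulting normal forms back into the tensor language of $2$-qudit gates. I begin with the easy implication. If $L = L_1 \otimes L_2$, then for any decomposable vector $\mid \phi_1 \rangle \otimes \mid \phi_2 \rangle$ we have $L(\mid \phi_1 \rangle \otimes \mid \phi_2 \rangle) = (L_1 \mid \phi_1 \rangle) \otimes (L_2 \mid \phi_2 \rangle)$, which is again decomposable; and if $L = (L_1 \otimes L_2)P$, then $L(\mid \phi_1 \rangle \otimes \mid \phi_2 \rangle) = (L_1 \mid \phi_2 \rangle) \otimes (L_2 \mid \phi_1 \rangle)$, again decomposable. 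Hence in both cases $L$ maps decomposable $2$-qudits to decomposable $2$-qudits, so $L$ is primitive.

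For the converse, I would set up the identification $\mathbb{C}^d \otimes \mathbb{C}^d \cong M_d(\mathbb{C})$ sending $\mid e_i \rangle \otimes \mid e_j \rangle$ to the elementary matrix $E_{ij}$, under which a decomposable vector $\mid u \rangle \otimes \mid v \rangle$ corresponds to the rank-one matrix $u\,v^{t}$. Under this identification a $2$-qudit gate $L$ becomes an \emph{invertible} linear map $T : M_d(\mathbb{C}) \to M_d(\mathbb{C})$, and the hypothesis that $L$ is primitive says exactly that $T$ carries every rank-one matrix to a rank-one matrix (it cannot lower the rank to $0$, since $T$ is invertible). The core of the argument is then the classical rank-one preserver theorem: an invertible linear map $T$ on $M_d(\mathbb{C})$ preserving rank one has the form $T(X) = AXB$ or $T(X) = AX^{t}B$ for some invertible $A, B$.

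To keep the argument self-contained I would prove the rank-one preserver theorem through the \emph{subspace dichotomy} lemma: a linear subspace of $M_d(\mathbb{C})$ all of whose nonzero members have rank one either consists entirely of matrices $a\,w^{t}$ with a common left vector $a$, or entirely of matrices $w\,b^{t}$ with a common right vector $b$ (this follows from the fact that a sum of two rank-one matrices $a_1 b_1^{t} + a_2 b_2^{t}$ has rank one only if $a_1 \parallel a_2$ or $b_1 \parallel b_2$). Applying this to the $d$-dimensional subspaces $W_{v} = L(\mathbb{C}^d \otimes \mid v \rangle)$ classifies each $W_v$ as \emph{left-fixed} or \emph{right-fixed}, whence $W_v$ equals the full $d$-dimensional set $\{a_v \otimes y\}$ or $\{x \otimes b_v\}$. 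A gluing argument applied to $W_{v_0+v_1}$ then shows all the $W_v$ share a single type: if $W_{v_0}=\{x\otimes b\}$ and $W_{v_1}=\{a\otimes y\}$ had opposite types, then $L(u\otimes(v_0+v_1)) = x_u\otimes b + a\otimes y_u$, with $x_u,y_u$ ranging bijectively over $\mathbb{C}^d$, would have rank two for generic $u$, a contradiction when $d\ge 2$. This uniform type is precisely what pins down the two global normal forms for $T$.

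Finally I would translate back. Writing $X = u\,v^{t}$, the form $T(X)=AXB$ gives $u \otimes v \mapsto (Au) \otimes (B^{t}v)$, i.e. $L = A \otimes B^{t}$; the form $T(X)=AX^{t}B$ gives $u \otimes v \mapsto (Av) \otimes (B^{t}u) = (A \otimes B^{t})\,P\,(u \otimes v)$, i.e. $L = (A \otimes B^{t})\,P$. Setting $L_1 = A$ and $L_2 = B^{t}$ produces the two asserted shapes. It remains to arrange that $L_1, L_2$ are genuine $1$-qudit gates: since $L$ and $P$ are unitary, $L_1 \otimes L_2$ is unitary, so $(L_1^{*}L_1) \otimes (L_2^{*}L_2) = \mat{I}_{d^2}$; comparing eigenvalues forces $L_1^{*}L_1 = c\,\mat{I}_d$ and $L_2^{*}L_2 = c^{-1}\mat{I}_d$ for a scalar $c>0$, and rescaling $L_1 \mapsto c^{-1/2}L_1$, $L_2 \mapsto c^{1/2}L_2$ (which leaves $L_1 \otimes L_2$ unchanged) makes both factors unitary. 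The main obstacle is the rank-one preserver classification itself — concretely, globalizing the local column/row dichotomy of the subspaces $W_v$ into a single normal form, where the low-dimensional case $d=2$ and the bilinear compatibility across different $v$ demand the most care; the remaining steps are routine linear algebra.
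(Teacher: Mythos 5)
The paper does not prove this statement at all: it is quoted from \cite{bryl} as a known criterion, so there is no internal proof to measure you against, and the comparison must be with the original source. Your argument is correct in outline and is, in substance, the same proof in different clothing. Brylinski and Brylinski argue geometrically: the decomposable vectors form the cone over the Segre variety, whose maximal linear subspaces fall into the two rulings $\{u\otimes\mathbb{C}^d\}$ and $\{\mathbb{C}^d\otimes v\}$, and an invertible linear map preserving the cone must preserve or swap the two rulings. Your identification $\mathbb{C}^d\otimes\mathbb{C}^d\cong M_d(\mathbb{C})$ turns this into the classical Marcus--Moyls rank-one preserver theorem; your subspace dichotomy lemma is exactly the statement that a linear subspace contained in the cone lies in a single ruling, and your gluing argument via $W_{v_0+v_1}$ is the preserve-or-swap step. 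What the matrix formulation buys is a self-contained elementary proof requiring no algebraic geometry. Two points of care. First, the step you yourself flag --- passing from the uniform type of the subspaces $W_v$ to the global normal forms $T(X)=AXB$ or $T(X)=AX^{t}B$ --- is genuinely the remaining content: you must also run the dichotomy on the second family $L(u\otimes\mathbb{C}^d)$ to conclude that the left factor depends only on $u$ (here invertibility of $T$ rules out all the $b_v$ being parallel), and then use bilinearity of $(u,v)\mapsto T(uv^{t})$ to linearize the normalized assignments $u\mapsto a_u$ and $v\mapsto b_v$ up to scalars; this is routine but must be written out. Second, your endgame is right where many sketches go wrong: invertibility is what excludes the rank dropping to zero, and the observation that $(L_1^{*}L_1)\otimes(L_2^{*}L_2)=\mat{I}_{d^2}$ forces $L_1^{*}L_1=c\,\mat{I}_d$, followed by the rescaling, correctly upgrades the invertible factors to genuine $1$-qudit gates --- a point the bare rank-one preserver theorem does not hand you.
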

In \cite{kauf-lo3}, the authors answer  the question of which  $2$-qubit gates which satisfy   the YBE are entangling, using the classification from \cite{dye} and the criteria for entanglement from \cite{bryl}. The $2$-qubit gates from Examples  \ref{ex-c-ybe-2qubits} and \ref{ex-cnot-2qubit} are both entangling operators \cite{book-quantum2,kauf-lo3,book-quantum1}. As Example \ref{ex-set-theoretic-non-entangling}  illustrates it, not every $2$-qubit gate that is an $R$-matrix is entangling.
\begin{ex}\label{ex-set-theoretic-non-entangling}
	Let $c_{2,1}$ and $c_{2,2}$ be the two $R$-matrices of size 4 corresponding to the two set-theoretic solutions of the YBE,  from Example \ref{ex-permut-sol-2}. It is easy to show that both $2$-qubit gates $c_{2,1}$ and $c_{2,2}$ are primitive. Let
	$\mid \phi \rangle=\, \alpha_{00}\,\mid 00\rangle+\alpha_{01}\,\mid 01\rangle+\alpha_{10}\,\mid 10\rangle+\alpha_{11}\,\mid 11\rangle$ be a  decomposable two-qubit pure state, that is  $\alpha_{00}\alpha_{11}-\alpha_{01}\alpha_{10}\,= 0$.  Then 
	$c_{2,1} \,\mid \phi \rangle=\, \alpha_{00}\,\mid 00\rangle+\alpha_{10}\,\mid 01\rangle+\alpha_{01}\,\mid 10\rangle+\alpha_{11}\,\mid 11\rangle$ and  $\alpha_{00}\alpha_{11}-\alpha_{10}\alpha_{01}\,= 0$, that is  $c_{2,1}\,\mid \phi \rangle$ is decomposable. The same argument holds for $c_{2,2}$.
\end{ex}

\section{Construction of entangling and decomposable  $2$-qudit gates}
Let $c:\,(\mathbb{C}^n)^{\otimes 2}\,\rightarrow\,(\mathbb{C}^n)^{\otimes 2}$ and $d:\,(\mathbb{C}^m)^{\otimes 2}\,\rightarrow\,(\mathbb{C}^m)^{\otimes 2}$  be $2$-qudit gates. Let $c \boxtimes d$ denote their  Tracy-Singh product,  where  $c$ and $d$ have  a  canonical  block partition as in (\ref{eqn-c-blocks}). In this section, we show  that if either $c$ or  $d$ is an  entangling $2$-qudit gate, then  $c \boxtimes d:\,(\mathbb{C}^{nm})^{\otimes 2}\,\rightarrow\,(\mathbb{C}^{nm})^{\otimes 2}$ is  also an entangling $2$-qudit gate.
Let $\mid\phi\rangle$ and $\mid \psi\rangle$  be  $2$-qudits in $(\mathbb{C}^{n})^{\otimes 2}$ and $(\mathbb{C}^{m})^{\otimes 2}$,    respectively. Note that   $\mid\phi\rangle$ and $\mid \psi\rangle$   can be considered as  column vectors in  $(\mathbb{C})^{n^2}$ and $(\mathbb{C})^{m^2}$, denoted by   $\phi$ and  $\psi$ respectively, and as such one can compute   $\phi\boxtimes\psi$, with  $\phi$ partitioned into $n$ blocks of length $n$, $\phi_{11},...,\phi_{n1}$	 and  $\psi$ partitioned  into $m$ blocks of length $m$, $\psi_{11},...,\psi_{m1}$, as in Equation (\ref{eqn-partition-phi-psi}).  So,  depending on the kind of computations,   we use either terminology  for $\phi$ and $\psi$.  In the following lemmas, we assume all the above assumptions.
\begin{equation}\label{eqn-partition-phi-psi}
\scalemath{0.9}{
	\phi \,=\,
	\begin{pmatrix}
	\phi_{11}	\\
	\hline
	... \\
	...\\
	\hline
	\phi_{n1}		\\
	\end{pmatrix} \;\;\;\;\; \;\;\;\;\;\textrm{and}\;\;\;\;\;\;\;\;\;\;
	\begin{pmatrix}
	\psi_{11}	\\
	\hline
	... \\
	...\\
	\hline
	\psi_{m1}		
	\end{pmatrix}}
\end{equation}
\begin{lem}\label{lem-dist-phipsi}
	Under the above assumptions, the following holds:
	\begin{enumerate}[(i)]
		\item  \label{eqn-distr-c-phi}
		$	\scalemath{1}{
			(c \boxtimes d)\,(\phi \boxtimes \psi)\,=\,c\phi \boxtimes d\psi}$, 
		where  $c\phi$ is partitioned into $n$ blocks of length $n$ and  $d\psi$ into $m$ blocks of length $m$. 
		\item  If  $\phi\,=\, \alpha \otimes \beta$ and $\psi\,=\, \gamma \otimes \delta$, where $\alpha, \beta \in \mathbb{C}^{n}$ and $\gamma, \delta \in \mathbb{C}^{m}$, then
		\begin{equation}\label{eqn-tracy-decomposable}
		\phi \boxtimes \psi\,=\, (\alpha \otimes \gamma)\;\otimes\;(\beta \otimes \delta)
		\end{equation}
		\item If  $\phi$ and $\psi$ are unit vectors, then  $\phi \boxtimes\psi$ is also a unit vector.
	\end{enumerate}
\end{lem}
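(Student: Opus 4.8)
The plan is to reduce all three items to the formal properties of the Tracy-Singh product recorded in Theorem \ref{thm-tracy}, viewing $\phi$ and $\psi$ as column matrices of sizes $n^2\times 1$ and $m^2\times 1$, partitioned into $n$ row-blocks of length $n$ and $m$ row-blocks of length $m$ respectively, as in (\ref{eqn-partition-phi-psi}). The one structural observation underlying everything is that these row-partitions of $\phi$ and $\psi$ are exactly the ones compatible with the column-partitions of $c$ and $d$ induced by (\ref{eqn-c-blocks}), so that every blockwise product written below is well defined.

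For (i) I would apply the mixed-product property, Theorem \ref{thm-tracy} $(v)$, with $A=c$, $B=d$, $C=\phi$, $D=\psi$. The column-partition of $c$ into $n$ groups of $n$ columns matches the row-partition of $\phi$, and similarly for $d$ and $\psi$, so $c\phi$ and $d\psi$ exist as block products and $(c\boxtimes d)(\phi\boxtimes\psi)=c\phi\boxtimes d\psi$. The asserted partitions of $c\phi$ into $n$ blocks of length $n$ and of $d\psi$ into $m$ blocks of length $m$ are exactly those inherited from the matrix product. The only thing to verify is the compatibility of partitions, which is immediate.

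For (ii) I would compute directly from the definition $A\boxtimes B=((A_{ij}\otimes B_{kl})_{kl})_{ij}$. Writing $\phi=\alpha\otimes\beta$, its $i$-th block of length $n$ is $\alpha_i\beta$, and the $k$-th block of $\psi=\gamma\otimes\delta$ is $\gamma_k\delta$; hence the $(i,k)$-th block of $\phi\boxtimes\psi$ equals $(\alpha_i\beta)\otimes(\gamma_k\delta)=\alpha_i\gamma_k\,(\beta\otimes\delta)$. On the other hand the $(i,k)$-th coordinate of $\alpha\otimes\gamma$ is $\alpha_i\gamma_k$, so the $(i,k)$-th block of $(\alpha\otimes\gamma)\otimes(\beta\otimes\delta)$ is again $\alpha_i\gamma_k\,(\beta\otimes\delta)$. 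Comparing the two blockwise, with the lexicographic ordering of the pairs $(i,k)$ fixed throughout, yields (\ref{eqn-tracy-decomposable}).

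For (iii) I would observe that $\phi\boxtimes\psi$ and $\phi\otimes\psi$ carry the same multiset of scalar entries: every entry of $\phi\boxtimes\psi$ has the form $(\phi_{i1})_a(\psi_{k1})_b$, and as $(i,a)$ and $(k,b)$ range over all index pairs these exhaust the products of one coordinate of $\phi$ with one coordinate of $\psi$, i.e.\ precisely the entries of $\phi\otimes\psi$. (Equivalently, by Theorem \ref{thm-tracy-K} and Proposition \ref{prop-prop-box-cd}, $\phi\boxtimes\psi$ is obtained from $\phi\otimes\psi$ by a permutation matrix.) Therefore $\|\phi\boxtimes\psi\|=\|\phi\otimes\psi\|=\|\phi\|\,\|\psi\|$, the last equality being the standard multiplicativity of the Euclidean norm under the Kronecker product of vectors, so if $\phi$ and $\psi$ are unit vectors then so is $\phi\boxtimes\psi$. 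None of the computations is hard; the only place requiring genuine care is the bookkeeping of block partitions in (i), namely checking that the partition of $\phi$ (resp.\ $\psi$) into blocks of length $n$ (resp.\ $m$) is the one that makes Theorem \ref{thm-tracy} $(v)$ applicable together with the partition of $c$ and $d$ from (\ref{eqn-c-blocks}).
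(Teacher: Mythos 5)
Your proof is correct, and on parts (i) and (ii) it coincides with the paper's own argument: the paper likewise settles (i) in one line by Theorem \ref{thm-tracy} $(v)$, remarking only that the products $c\phi$ and $d\psi$ are well defined (your explicit check that the column-partition of $c$, resp.\ $d$, from (\ref{eqn-c-blocks}) is conformable with the row-partition of $\phi$, resp.\ $\psi$, from (\ref{eqn-partition-phi-psi}) is exactly what that remark compresses), and it proves (ii) by the same blockwise computation, exhibiting the block $\alpha_i\gamma_k(\beta\otimes\delta)$ at each position. The only genuine divergence is in (iii). The paper stays inside the Tracy--Singh calculus: writing the unit-norm hypothesis as $\phi\,\phi^{*}=1$ (to be read as the inner product $\phi^{*}\phi=1$, since $\phi$ is a column), it computes $(\phi\boxtimes\psi)(\phi\boxtimes\psi)^{*}=\phi\phi^{*}\boxtimes\psi\psi^{*}=1$, i.e.\ one more application of the mixed-product rule Theorem \ref{thm-tracy} $(v)$ together with the transpose rule $(viii)$, uniform with the proof of (i). You instead show that $\phi\boxtimes\psi$ is an entrywise rearrangement of $\phi\otimes\psi$ --- either by matching the multisets of entries directly, or via the permutation identity of Proposition \ref{prop-prop-box-cd} specialised to $p=q=1$, where the right-hand conjugating factor degenerates to the scalar $1$ --- and then invoke multiplicativity of the Euclidean norm under the Kronecker product. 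Both routes are sound; the paper's is shorter and needs no facts beyond Theorem \ref{thm-tracy}, while yours buys a slightly stronger structural insight, namely that $\phi\mapsto\phi\boxtimes\psi$ differs from $\phi\otimes\psi$ only by a fixed permutation of coordinates and is therefore an isometry on all of $\mathbb{C}^{n^2}\otimes\mathbb{C}^{m^2}$, which also makes the unit-vector claim transparent without any adjoint bookkeeping.
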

\begin{proof}
	$(i)$  results from Theorem \ref{thm-tracy} $(v)$, since  the products $c\phi$ and $d\psi$ are well-defined.  \\
	$(ii)$ Assume $\alpha=(\alpha_1,...,\alpha_n)^t$,  $\gamma=(\gamma_1,...,\gamma_m)^t$. So, 
	$\phi=\,(\alpha_1\beta, ...,\alpha_n\beta)^t$ and $\psi=\, (\gamma_1\delta,...,\gamma_m\delta)^t$.
	\begin{figure*}[h!]
		\begin{equation*}\label{eqn-phi-tracy-psi}
		\phi \boxtimes \psi=
		\begin{pmatrix}
		\alpha_1\beta	\\
		\hline
		... \\
		...\\
		\hline
		\alpha_n\beta		\\
		\end{pmatrix} \boxtimes
		\begin{pmatrix}
		\gamma_1\delta		\\
		\hline
		... \\
		...\\
		\hline
		\gamma_m\delta			\\
		\end{pmatrix}\,=\,
		\begin{pmatrix}
		\alpha_1\beta\otimes 	\gamma_1\delta		\\
		... \\
		\alpha_1\beta \otimes \gamma_m\delta\\
		\hdashline[2pt/2pt]
		... \\
		\hdashline[2pt/2pt]
		\alpha_n\beta	\otimes \gamma_1\delta		\\
		... \\
		\alpha_n\beta\otimes\gamma_m\delta\\
		\end{pmatrix}\,=\,
		\begin{pmatrix}
		\alpha_1\gamma_1(\beta\otimes 	\delta	)	\\
		... \\
		\alpha_1\gamma_m(\beta \otimes \delta)\\
		\hdashline[2pt/2pt]
		... \\
		\hdashline[2pt/2pt]
		\alpha_n\gamma_1(\beta	\otimes \delta)		\\
		... \\
		\alpha_n\gamma_m(\beta\otimes\delta)\\
		\end{pmatrix}=\, (\alpha \otimes \gamma)\;\otimes\;(\beta \otimes \delta)
		\end{equation*}
	\end{figure*}\\
	$(iii)$ If  $\phi$ and $\psi$ are unit vectors, that is $\phi\,\phi^*=\,1$ and $\psi\,\psi^*=\,1$, then  $(\phi \boxtimes\psi)\,(\phi \boxtimes\psi)^*= \phi\,\phi^*\boxtimes \psi\,\psi^*=\,1 $, since  $\phi\,\phi^*$ and $\psi\,\psi^*$ are well-defined. So,   $\phi \boxtimes\psi$ is  also a  unit vector.
\end{proof}
A direct proof of  Equation (\ref{eqn-distr-c-phi}) relies on the definition of the Tracy-Singh product of matrices and the  mixed-product property of the Kronecker product of  matrices, and all the above  assumptions on the partitions are  very crucial.
\begin{lem}\label{lem-decomp-iff}
	Under the above assumptions,   $\phi \boxtimes \psi$ is a  decomposable  $2$-qudit in $(\mathbb{C}^{nm})^{\otimes 2}$  if and only if   $\phi$ and $\psi$ are decomposable.
\end{lem}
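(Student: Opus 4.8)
The plan is to reduce decomposability of a $2$-qudit to a rank condition on an associated coordinate matrix, and then to identify the coordinate matrix of $\phi\boxtimes\psi$ with a Kronecker product. One direction is immediate: if $\phi$ and $\psi$ are decomposable, write $\phi = \alpha\otimes\beta$ and $\psi = \gamma\otimes\delta$ with $\alpha,\beta\in\mathbb{C}^n$ and $\gamma,\delta\in\mathbb{C}^m$, and apply Lemma \ref{lem-dist-phipsi}$(ii)$, which gives $\phi\boxtimes\psi = (\alpha\otimes\gamma)\otimes(\beta\otimes\delta)$, a tensor product of two vectors of $\mathbb{C}^{nm}$, hence a decomposable $2$-qudit in $(\mathbb{C}^{nm})^{\otimes 2}$.

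For the converse I would use the classical matricization criterion. Writing a $2$-qudit $\phi = \sum_{i,j}\phi_{ij}\,e_i\otimes e_j$ in $(\mathbb{C}^n)^{\otimes 2}$ and letting $M_\phi = (\phi_{ij})$ denote the $n\times n$ coordinate matrix, $\phi$ is decomposable if and only if $\operatorname{rank}(M_\phi)\le 1$; this is the higher-dimensional analogue of the $2\times 2$ determinant test $\alpha_{00}\alpha_{11}-\alpha_{01}\alpha_{10}\neq 0$ quoted earlier for qubits. The heart of the argument is to prove the identity $M_{\phi\boxtimes\psi} = M_\phi\otimes M_\psi$, where $\phi\boxtimes\psi$ is regarded as a $2$-qudit over $\mathbb{C}^{nm}$ via the identification $\mathbb{C}^{nm}=\mathbb{C}^n\otimes\mathbb{C}^m$.

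To establish this identity I would argue by bilinearity. Both sides are bilinear in the pair $(\phi,\psi)$: the left side because the Tracy-Singh product is bilinear (Theorem \ref{thm-tracy}$(iv),(vi)$) and matricization is linear, and the right side because $M_{\bullet}$ is linear in each slot while $\otimes$ is bilinear. Since the decomposable tensors span $(\mathbb{C}^n)^{\otimes 2}$ and $(\mathbb{C}^m)^{\otimes 2}$, it suffices to verify the identity on pairs of decomposables. There, by Lemma \ref{lem-dist-phipsi}$(ii)$, $\phi\boxtimes\psi = (\alpha\otimes\gamma)\otimes(\beta\otimes\delta)$, whose coordinate matrix is $(\alpha\otimes\gamma)(\beta\otimes\delta)^t = (\alpha\beta^t)\otimes(\gamma\delta^t) = M_\phi\otimes M_\psi$, as desired.

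With the identity in hand the conclusion follows from two standard facts: $\operatorname{rank}(M_\phi\otimes M_\psi) = \operatorname{rank}(M_\phi)\cdot\operatorname{rank}(M_\psi)$, and that $\phi$ and $\psi$ are unit vectors, hence nonzero, so $M_\phi, M_\psi\neq 0$ and both ranks are at least $1$. Thus $\operatorname{rank}(M_{\phi\boxtimes\psi})\le 1$ holds if and only if $\operatorname{rank}(M_\phi)=\operatorname{rank}(M_\psi)=1$, i.e. if and only if both $\phi$ and $\psi$ are decomposable. The main obstacle is the bookkeeping behind $M_{\phi\boxtimes\psi}=M_\phi\otimes M_\psi$: one must check that the block/lexicographic ordering implicit in the Tracy-Singh partition matches the ordering of the basis of $(\mathbb{C}^{nm})^{\otimes 2}$ used to form $M_{\phi\boxtimes\psi}$. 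The bilinearity reduction is precisely what lets me avoid a direct index chase and lean on Lemma \ref{lem-dist-phipsi}$(ii)$ instead.
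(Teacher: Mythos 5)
Your proof is correct, and it takes a genuinely different route from the paper's in the harder direction. The forward implication coincides with the paper's: both invoke Lemma \ref{lem-dist-phipsi}$(ii)$ to write $\phi\boxtimes\psi=(\alpha\otimes\gamma)\otimes(\beta\otimes\delta)$. For the converse, the paper argues directly on blocks: writing $\phi\boxtimes\psi=\sigma\otimes\tau$ and comparing each $nm$-block $\phi_{i1}\otimes\psi_{k1}$ with the corresponding $\sigma_\ell\,\tau$ in Equation (\ref{eqn-comput-phi-box-psi}), it extracts scalars $\mu_i,\eta_j$ with $\phi_{i1}=\mu_i\phi_{11}$ and $\psi_{j1}=\eta_j\psi_{11}$, thereby exhibiting explicit decompositions of $\phi$ and $\psi$ as in Equation (\ref{Eqn-form-decomp-phi-psi}). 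You instead matricize, prove $M_{\phi\boxtimes\psi}=M_\phi\otimes M_\psi$ by bilinearity (checked on decomposables, again via Lemma \ref{lem-dist-phipsi}$(ii)$), and conclude from $\operatorname{rank}(A\otimes B)=\operatorname{rank}(A)\operatorname{rank}(B)$ together with the nonvanishing of unit vectors. Your route buys robustness: the paper's elimination of the $\sigma_\ell$'s carries the caveat ``whenever the denominators are not zero,'' so vanishing blocks (e.g.\ $\phi_{11}=0$) strictly require an extra word there, whereas your rank argument is insensitive to which entries vanish, treats both directions symmetrically, and even quantifies entanglement (the Schmidt rank of $\phi\boxtimes\psi$ is the product of those of $\phi$ and $\psi$). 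What the paper's proof buys is self-containedness --- it uses no facts beyond the block computation, while you import the matricization criterion and Kronecker rank multiplicativity, neither stated in the paper --- and explicit witnesses for the decompositions, in the constructive spirit of Theorem \ref{theo-tracy-entangling-also-ent}$(ii)$. The one bookkeeping point your argument must (and does) discharge is that the indexing of $M_{\phi\boxtimes\psi}$ agrees with the identification $\mathbb{C}^{nm}=\mathbb{C}^n\otimes\mathbb{C}^m$ under the lexicographic ordering fixed in Example \ref{ex1-kaufman}; your reduction to decomposables handles this correctly, since Lemma \ref{lem-dist-phipsi}$(ii)$ already encodes exactly that identification.
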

\begin{proof}
	Assume 	that $\phi$ and $\psi$ are decomposable, with $\phi\,=\, \alpha \otimes \beta$ and $\psi\,=\, \gamma \otimes \delta$, where $\alpha, \beta \in \mathbb{C}^{n}$ and $\gamma, \delta \in \mathbb{C}^{m}$, then from Lemma  \ref{lem-dist-phipsi}$(ii)$, 
	$\phi \boxtimes \psi\,=\, (\alpha \otimes \gamma)\;\otimes\;(\beta \otimes \delta)$, with $\alpha \otimes \gamma\in \mathbb{C}^{nm}$ and   $\beta \otimes \delta \in \mathbb{C}^{nm}$. That is,  $\phi \boxtimes \psi$ is a  decomposable $2$-qudit in $(\mathbb{C}^{nm})^{\otimes 2}$. \\
	Assume that $\phi \boxtimes \psi$ is decomposable, that  is there exist $\sigma=\,(\sigma_1,...,\sigma_{nm})^t$ and $\tau=\,(\tau_1,...,\tau_{nm})^t$ in $\mathbb{C}^{nm}$, such that $\phi \boxtimes \psi=\,\sigma \otimes  \tau$.  
	Let denote by $\phi=\,(\phi_{11},...,\phi_{n1})^t$ and $\psi=\,(\psi_{11},...,\psi_{m1})^t$ the  partition of $\phi$ and $\psi$ into blocks of size $n$ and $m$ respectively.  So, from the computations of   $\phi \boxtimes \psi$ and  $\sigma \otimes  \tau$, and the equality of each block of size $nm$,  as described in   Equation (\ref{eqn-comput-phi-box-psi}), we have  $\tau=\frac{1}{\sigma_1}	\,\phi_{11}\otimes 		\psi_{11}=...=	\frac{1}{\sigma_m}\,\phi_{11}\otimes 		\psi_{m1}$, ...,  and 
	$\tau=\frac{1}{\sigma_*}\,	\phi_{n1}\otimes 		\psi_{11}=...=	\frac{1}{\sigma_{nm}}\,\phi_{n1}\otimes 		\psi_{m1}$, whenever the denominators are not zero. 
	\begin{equation}\label{eqn-comput-phi-box-psi}
	\scalemath{0.8}{
		\phi \boxtimes \psi=
		\begin{pmatrix}
		\phi_{11}	\\
		\hline
		... \\
		...\\
		\hline
		\phi_{n1}		\\
		\end{pmatrix} \boxtimes
		\begin{pmatrix}
		\psi_{11}	\\
		\hline
		... \\
		...\\
		\hline
		\psi_{m1}		\\
		\end{pmatrix}\,=\,
		\begin{pmatrix}
		\phi_{11}\otimes 		\psi_{11}	\\
		\hdashline[2pt/2pt]
		... \\
		\hdashline[2pt/2pt]
		\phi_{11}\otimes 		\psi_{m1}	\\
		\hdashline[0.8pt/0.8pt]
		... \\
		\hdashline[0.8pt/0.8pt]
		\phi_{n1}\otimes 		\psi_{11}	\\
		\hdashline[2pt/2pt]
		... \\
		\hdashline[2pt/2pt]
		\phi_{n1}\otimes 		\psi_{m1}	\\
		\end{pmatrix}\;\;\;\;\;\;\textrm{and}\;\;\;\;\;\;
		\sigma \otimes \tau=\,
		\begin{pmatrix}
		\sigma_1\,\tau	\\	
		\hdashline[2pt/2pt]
		... \\
		\hdashline[2pt/2pt]
		\sigma_m\,\tau	\\
		\hdashline[0.8pt/0.8pt]
		... \\
		\hdashline[0.8pt/0.8pt]
		\sigma_{*}\,\tau\\
		\hdashline[2pt/2pt]
		... \\
		\hdashline[2pt/2pt]
		\sigma_{nm}\,\tau	
		\end{pmatrix}
	}
	\end{equation}
	
	So, there exist scalars $\mu_2,...,\mu_n,\eta_2,...,\eta_m$ (fractions of the $\sigma_i$'s) such that 
	$\phi_{21}=\mu_2\,\phi_{11},...,\phi_{n1}=\mu_n\,\phi_{11}$
	and 
	$\psi_{21}=\eta_2\,\psi_{11},...,\psi_{m1}=\eta_m\,\psi_{11}$. That is, $\phi $ and $\psi$ are both decomposable, as described in Equation (\ref{Eqn-form-decomp-phi-psi}).
	\begin{equation}\label{Eqn-form-decomp-phi-psi}
	\phi =
	\begin{pmatrix}
	\phi_{11}	\\
	\hline
	\mu_2\,		\phi_{11}	\\
	\hline
	...\\
	\hline
	\mu_n\,	\phi_{11}		\\
	\end{pmatrix} \;=
	\begin{pmatrix}
	1	\\
	\mu_2	\\
	...\\
	\mu_n	\\
	\end{pmatrix} \otimes \phi_{11}\;\;\;\;\;\;\textrm{and}\;\;\;\;\;\;
	\psi=
	\begin{pmatrix}
	\psi_{11}	\\
	\hline
	\eta_2\,\psi_{11}	\\
	\hline
	...\\
	\hline
	\eta_m\,\psi_{11}		\\
	\end{pmatrix}\;=
	\begin{pmatrix}
	1	\\
	\eta_2	\\
	...\\
	\eta_m	\\
	\end{pmatrix} \otimes \psi_{11}
	\end{equation}
\end{proof}
Lemma \ref{lem-decomp-iff} implies that  if $\phi$ or  $\psi$  is an entangled   $2$-qudit, then $\phi \boxtimes \psi$  is also entangled,  where  $\phi$ is partitioned into $n$ blocks of length $n$ and  $\psi$ into $m$ blocks of length $m$. This enables us to prove that the Tracy-Singh product of two $2$-qudit gates where one of them is an entangling gate is also entangling.  We reformulate Theorem \ref{thm2} more precisely as follows and prove it.
\begin{thm}\label{theo-tracy-entangling-also-ent}
	Let $c:\,(\mathbb{C}^n)^{\otimes 2}\,\rightarrow\,(\mathbb{C}^n)^{\otimes 2}$ and $d:\,(\mathbb{C}^m)^{\otimes 2}\,\rightarrow\,(\mathbb{C}^m)^{\otimes 2}$  be $2$-qudit gates. Let $c \boxtimes d$ denote their  Tracy-Singh product,  where  $c$ and $d$ have  a   block partition as in (\ref{eqn-c-blocks}). 
	\begin{enumerate}[(i)]
		\item  Assume  $c$ is entangling.
		Then $c \boxtimes d:\,(\mathbb{C}^{nm})^{\otimes 2}\,\rightarrow\,(\mathbb{C}^{nm})^{\otimes 2}$ is  also  entangling.
		\item  Assume  $\phi$  is  a decomposable    $2$-qudit in $(\mathbb{C}^{n})^{\otimes 2}$ such that $c \phi$ is an  entangled $2$-qudit in $(\mathbb{C}^{n})^{\otimes 2}$.  Let $\psi$ be  any  decomposable   $2$-qudit in  $(\mathbb{C}^{m})^{\otimes 2}$. Then $\phi \boxtimes \psi$, with partition as in (\ref{eqn-partition-phi-psi}),  is a decomposable   $2$-qudit in  $(\mathbb{C}^{nm})^{\otimes 2}$ such that  $(c \boxtimes d)\,(\phi \boxtimes \psi)$
		is an entangled $2$-qudit in  $(\mathbb{C}^{nm})^{\otimes 2}$.
		\item  Assume  $c$ and $d$ are primitive. If $c\,=\,c_1\otimes c_2$ and  $d\,=\,d_1\otimes d_2$ or $c\,=\,(c_1\otimes c_2)S_c$ and  $d\,=\,(d_1\otimes d_2)S_d$, where $S_c$ and $S_d$ are  the  swap maps on $(\mathbb{C}^n)^{\otimes 2}$ and $(\mathbb{C}^m)^{\otimes 2}$ respectively.
		Then $c \boxtimes d:\,(\mathbb{C}^{nm})^{\otimes 2}\,\rightarrow\,(\mathbb{C}^{nm})^{\otimes 2}$ is  also primitive.
	\end{enumerate} 
\end{thm}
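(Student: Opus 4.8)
The plan is to reduce all three parts to the two lemmas on $\phi\boxtimes\psi$ (Lemmas \ref{lem-dist-phipsi} and \ref{lem-decomp-iff}) together with the Bryli\'nski--Bryli\'nski criterion (Theorem \ref{thm-bryl-criteria}), so that essentially no new computation is needed except a factor-bookkeeping argument for part (iii). I would prove (ii) first, since it is the computational heart, then derive (i) as an immediate consequence, and finally treat (iii) separately.

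For (ii): since $\phi$ and $\psi$ are both decomposable, Lemma \ref{lem-decomp-iff} gives that $\phi\boxtimes\psi$ is a decomposable $2$-qudit, and Lemma \ref{lem-dist-phipsi}$(iii)$ ensures it is a unit vector, so it is a genuine decomposable state. Applying Lemma \ref{lem-dist-phipsi}$(i)$ yields $(c\boxtimes d)(\phi\boxtimes\psi)=c\phi\boxtimes d\psi$, where $c\phi$ carries the partition into $n$ blocks of length $n$ and $d\psi$ the partition into $m$ blocks of length $m$. By hypothesis $c\phi$ is entangled, so the contrapositive of Lemma \ref{lem-decomp-iff} (one factor entangled forces the Tracy--Singh product to be entangled, as noted after that lemma) shows $c\phi\boxtimes d\psi$ is entangled, which is exactly the assertion of (ii).

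For (i): by Theorem \ref{theo-tracy-unitary}$(ii)$, $c\boxtimes d$ is unitary, hence a genuine $2$-qudit gate. Since $c$ is entangling there is a decomposable $\phi$ with $c\phi$ entangled; choosing any decomposable $\psi\in(\mathbb{C}^m)^{\otimes 2}$ (for instance a standard basis state) and feeding this pair into (ii) produces a decomposable $\phi\boxtimes\psi$ whose image under $c\boxtimes d$ is entangled, so $c\boxtimes d$ is entangling. For (iii): I would use the identification $c\boxtimes d=F_{23}\,(c\otimes d)\,F_{23}$ from Remark \ref{rem-ts-general-matrices}, where $F_{23}$ exchanges the two middle tensor factors of $\mathbb{C}^n\otimes\mathbb{C}^m\otimes\mathbb{C}^n\otimes\mathbb{C}^m$, and track a basis vector $(w\otimes z)\otimes(w'\otimes z')$ through it. When $c=c_1\otimes c_2$ and $d=d_1\otimes d_2$, the image is $(c_1 w\otimes d_1 z)\otimes(c_2 w'\otimes d_2 z')$, so $c\boxtimes d=(c_1\otimes d_1)\otimes(c_2\otimes d_2)=L_1\otimes L_2$ with $L_1,L_2$ being $1$-qudit gates on $\mathbb{C}^{nm}$. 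When $c=(c_1\otimes c_2)S_c$ and $d=(d_1\otimes d_2)S_d$, the same computation, after the internal swaps, gives the image $(c_1 w'\otimes d_1 z')\otimes(c_2 w\otimes d_2 z)$, which equals $(L_1\otimes L_2)P\,[(w\otimes z)\otimes(w'\otimes z')]$ with $L_1=c_1\otimes d_1$, $L_2=c_2\otimes d_2$ and $P$ the swap on $(\mathbb{C}^{nm})^{\otimes 2}$. In either case Theorem \ref{thm-bryl-criteria} yields that $c\boxtimes d$ is primitive, of the same kind.

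The genuinely delicate step is (iii): I expect the main obstacle to be keeping the bookkeeping of the four tensor factors straight, that is, verifying that the two applications of $F_{23}$ together with the internal swaps $S_c,S_d$ reassemble precisely into a single global swap $P$ on $\mathbb{C}^{nm}\otimes\mathbb{C}^{nm}$ and a clean tensor $L_1\otimes L_2$, rather than some other permutation of the factors. Parts (i) and (ii), by contrast, are essentially immediate given the two lemmas.
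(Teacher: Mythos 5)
Your proposal is correct and follows essentially the same route as the paper's proof: both parts (i) and (ii) rest on Lemma \ref{lem-dist-phipsi}$(i)$ giving $(c\boxtimes d)(\phi\boxtimes\psi)=c\phi\boxtimes d\psi$ together with Lemma \ref{lem-decomp-iff} (and its contrapositive), and part (iii) uses the identification $c\boxtimes d=F_{23}\,(c\otimes d)\,F_{23}$ from Remark \ref{rem-ts-general-matrices} with exactly the same factor-tracking, landing in Theorem \ref{thm-bryl-criteria} in both cases. The only differences are cosmetic --- you prove (ii) first and derive (i) from it, whereas the paper treats them together, and you add the (harmless, slightly more careful) observations that $\phi\boxtimes\psi$ is a unit vector and that $c\boxtimes d$ is unitary, which the paper leaves implicit.
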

\begin{proof}
	$(i)$, $(ii)$ From Lemma \ref{lem-decomp-iff}, $\phi \boxtimes \psi$ is a decomposable   $2$-qudit in  $(\mathbb{C}^{nm})^{\otimes 2}$. It remains to prove that $(c \boxtimes d)\,(\phi \boxtimes \psi)$
	is an entangled $2$-qudit in  $(\mathbb{C}^{nm})^{\otimes 2}$. From Lemma \ref{lem-dist-phipsi}$(i)$, $(c \boxtimes d)\,(\phi \boxtimes \psi)=(c  \phi )\boxtimes (d \psi)$. As $c  \phi $ is  entangled, $(c  \phi )\boxtimes (d \psi)$ is also entangled from  Lemma \ref{lem-decomp-iff}. That is, $(c \boxtimes d)\,(\phi \boxtimes \psi)$	is an entangled $2$-qudit in  $(\mathbb{C}^{nm})^{\otimes 2}$ and 
	$c \boxtimes d:\,(\mathbb{C}^{nm})^{\otimes 2}\,\rightarrow\,(\mathbb{C}^{nm})^{\otimes 2}$ is  an entangling $2$-qudit gate. Note that if we assumed that $d$ is entangling instead of $c$, then we  would choose $\psi$ to  be a decomposable    $2$-qudit in $(\mathbb{C}^{m})^{\otimes 2}$ such that $d \psi$ is an  entangled $2$-qudit in $(\mathbb{C}^{m})^{\otimes 2}$ and exactly the same proof would hold.\\
	$(iii)$ From Remark \ref{rem-ts-general-matrices}, we have $c \boxtimes d\,=\,(F_{23}\,(c \otimes d)\,F_{23})\,(x_1\otimes y_1\otimes x_2\otimes y_2)$, for every $x_1,x_2 \in \mathbb{C}^{n}$,  $y_1,y_2 \in \mathbb{C}^{m}$. In the first case, this gives $(c_1(x_1)\,\otimes \,d_1(y_1)\,\otimes\,c_2( x_2)\,\otimes\, d_2(y_2))$, that is 
	$c \boxtimes d\,=\,(c_1\otimes d_1)\,\otimes \,(c_2\otimes d_2)$, and  $c \boxtimes d$ is primitive from Theorem \ref{thm-bryl-criteria}. In the second case, this gives $c_1(x_2)\,\otimes \,d_1(y_2)\,\otimes\,c_2( x_1)\,\otimes\, d_2(y_1)$, that is 
	$c \boxtimes d\,=\,((c_1\otimes d_1)\,\otimes \,(c_2\otimes d_2))\,P$, where $P$ is the swap map on $(\mathbb{C}^{nm})^{\otimes 2}$, and  $c \boxtimes d$ is primitive from Theorem \ref{thm-bryl-criteria}.  
\end{proof}
It is not clear whether it is always true that if $c$ and $d$ are primitive  $2$-qudit gates, then $c \boxtimes d:\,(\mathbb{C}^{nm})^{\otimes 2}\,\rightarrow\,(\mathbb{C}^{nm})^{\otimes 2}$ is  a primitive $2$-qudit gate.  From Theorem \ref{theo-tracy-entangling-also-ent}, one can construct infinitely many families of entangling $2$-qudit gates.  Indeed, one can take the Tracy-Singh  product of the CNOT gate  (see Example \ref{ex-cnot-2qubit}), which is known to be (strongly) entangling, with any arbitrary   $2$-qudit gate and obtain a new entangling $2$-qudit gate. The same process can be iterated to obtain infinitely many entangling $2$-qudit gates. A question that arises naturally is whether there exists a simple way to decompose a $2$-qudit gate of the form  $c \boxtimes d$ as a product of CNOT and single qudit gates.

We now turn to  the proof of Theorem \ref{thm0}. For that, we need the following proposition.
\begin{prop}\label{prop-square-free-3-entangling}
	\begin{enumerate}[(i)]
		\item  Let $p\geq 3$ be a prime. Let $r:\,(\mathbb{C}^p)^{\otimes 2}\,\rightarrow\,(\mathbb{C}^p)^{\otimes 2}$ be the   $2$-qudit gate corresponding to the square-free, non-degenerate involutive st-YBE defined by:
		\begin{gather*}
		\sigma_1=...=\sigma_{p-1}\,=\,	\gamma_1=...=\gamma_{p-1}\,=\,Id\\
		\sigma_{p}=\gamma_p\,=\,(1,2)\cdot...\cdot(p-2,p-1)
		\end{gather*}
		Then $r$ is an  entangling $2$-qudit gate that satisfies the YBE.
		\item Let  $p\geq 2$ be a prime.  Let $s:\,(\mathbb{C}^p)^{\otimes 2}\,\rightarrow\,(\mathbb{C}^p)^{\otimes 2}$ be the   $2$-qudit gate corresponding to the  unique indecomposable  non-degenerate involutive (cyclic permutation) st-YBE:
		\begin{gather*}
		\sigma=(1,2,...,p) \;\;\;\;\; \textrm{and}\;\;\;\;\; \gamma=(p,..,2,1)
		\end{gather*}
	\end{enumerate}
	Then  $s$ is a primitive $2$-qudit gate that satisfies the YBE.
\end{prop}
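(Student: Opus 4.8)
The plan is to observe first that both claims about the Yang-Baxter equation are essentially free: $r$ and $s$ are attached to non-degenerate involutive \emph{set-theoretic} solutions, hence are braided, so the associated operators satisfy the YBE and are $R$-matrices; moreover each one permutes the standard basis $\{e_i\otimes e_j\}$ of $(\mathbb{C}^p)^{\otimes 2}$, so both are permutation matrices and in particular unitary, i.e. genuine $2$-qudit gates. Thus the real content is that $r$ is entangling and $s$ is primitive. Throughout I use the convention, consistent with Example \ref{ex-permut-sol-2}, that the gate attached to $(X,r)$ acts by $e_i\otimes e_j\mapsto e_{\sigma_i(j)}\otimes e_{\gamma_j(i)}$, together with the standard fact (generalizing the $2$-qubit criterion $\alpha_{00}\alpha_{11}-\alpha_{01}\alpha_{10}\neq 0$ recalled above) that a $2$-qudit $\sum_{k,l}M_{kl}\,e_k\otimes e_l$ is decomposable if and only if its coefficient matrix $M=(M_{kl})$ has rank at most $1$, equivalently all its $2\times2$ minors vanish.

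For part (i), I would first record the action of $r$ on the relevant basis vectors. Writing $\tau=\sigma_p=\gamma_p=(1,2)\cdots(p-2,p-1)$, one gets $r(e_i\otimes e_j)=e_i\otimes e_j$ whenever $i,j\leq p-1$, while $r(e_p\otimes e_j)=e_{\tau(j)}\otimes e_p$ for $j\leq p-1$. Then I exhibit the decomposable input $\phi=(e_1+e_p)\otimes e_1=e_1\otimes e_1+e_p\otimes e_1$ and compute
\[
r\,\phi\,=\,e_1\otimes e_1+e_{\tau(1)}\otimes e_p\,=\,e_1\otimes e_1+e_2\otimes e_p .
\]
The coefficient matrix of $r\phi$ has nonzero entries only at positions $(1,1)$ and $(2,p)$; since $p\geq 3$ these lie in distinct rows and distinct columns, so the $2\times2$ minor on rows $\{1,2\}$ and columns $\{1,p\}$ equals $1$. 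Hence $r\phi$ is entangled while $\phi$ is decomposable, so $r$ is an entangling $2$-qudit gate.

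For part (ii), the solution is a permutation solution, so $s(e_i\otimes e_j)=e_{\sigma(j)}\otimes e_{\sigma^{-1}(i)}$ with $\sigma=(1,2,\dots,p)$. Let $P_\sigma$ and $P_{\sigma^{-1}}$ denote the $1$-qudit permutation gates $e_k\mapsto e_{\sigma(k)}$ and $e_k\mapsto e_{\sigma^{-1}(k)}$ (both unitary), and let $P$ be the swap gate of Theorem \ref{thm-bryl-criteria}. Then I would verify directly on basis vectors that
\[
(P_\sigma\otimes P_{\sigma^{-1}})\,P\,(e_i\otimes e_j)\,=\,(P_\sigma\otimes P_{\sigma^{-1}})(e_j\otimes e_i)\,=\,e_{\sigma(j)}\otimes e_{\sigma^{-1}(i)}\,=\,s(e_i\otimes e_j),
\]
so that $s=(P_\sigma\otimes P_{\sigma^{-1}})\,P$. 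By the Brylinski criterion (Theorem \ref{thm-bryl-criteria}) $s$ is therefore primitive.

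The argument involves no deep obstacle; the only care needed is in translating the $(\sigma,\gamma)$ data into the basis action, and in selecting, for part (i), a decomposable vector whose image picks up two independent Schmidt components. The actual crux of part (ii) is recognizing the factorization $s=(P_\sigma\otimes P_{\sigma^{-1}})\,P$, after which Theorem \ref{thm-bryl-criteria} applies immediately. I would note in passing that this same factorization shows that \emph{every} permutation solution yields a primitive gate, so cyclicity of $\sigma$ is irrelevant to primitivity and only serves to single out the indecomposable solution of Proposition \ref{prop-indecomp-prime}.
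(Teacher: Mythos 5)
Your proposal is correct, and for part (i) it takes a genuinely different --- and more economical --- route than the paper. The paper proves that $r$ is entangling by contradiction through the Brylinski criterion (Theorem \ref{thm-bryl-criteria}): it writes out the block form of $r$ as in Equation (\ref{eqn-r-square-free-general}) and rules out both $r=A\otimes B$ (by comparing diagonal blocks, which would force $a_{11}B=E_{1,1}$, $a_{22}B=E_{2,2}$, etc.) and $r=(A\otimes B)P$ (by comparing the first column of blocks, which forces incompatible values for the first column of $B$). You instead exhibit an explicit decomposable witness $\phi=(e_1+e_p)\otimes e_1$ (normalize by $\frac{1}{\sqrt{2}}$ to get a bona fide state) whose image $e_1\otimes e_1+e_2\otimes e_p$ has coefficient matrix of rank $2$, hence is entangled. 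This is shorter, avoids the case analysis entirely, and supplies exactly the kind of explicit decomposable-input/entangled-output pair that Theorem \ref{theo-tracy-entangling-also-ent}(ii) then propagates through Tracy--Singh products; since being entangling is by definition the existence of such a witness, nothing is lost relative to the paper's indirect argument. For part (ii), your factorization $s=(P_\sigma\otimes P_{\sigma^{-1}})\,P$ is exactly the paper's proof (its $A$, $B$ are your $P_\sigma$, $P_{\sigma^{-1}}$), and your closing remark that the same factorization makes \emph{every} permutation solution primitive is correct.

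One slip to fix in the write-up: with the convention $e_i\otimes e_j\mapsto e_{\sigma_i(j)}\otimes e_{\gamma_j(i)}$ that you yourself state --- and that Example \ref{ex-permut-sol-2} confirms, since the trivial solution ($\sigma_x=\gamma_x=Id$) yields the swap matrix $c_{2,1}$, not the identity --- one has $r(e_i\otimes e_j)=e_j\otimes e_i$ for $i,j\leq p-1$, not $r(e_i\otimes e_j)=e_i\otimes e_j$ as you wrote; compare the paper's own description preceding Equation (\ref{eqn-r-square-free-general}). The error is not load-bearing: the only vector from that range your witness uses is $e_1\otimes e_1$, which is fixed under either reading, and $r(e_p\otimes e_1)=e_{\tau(1)}\otimes e_p=e_2\otimes e_p$ is computed correctly, so the entanglement conclusion stands as stated.
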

\begin{proof}
	$(i)$ We use the criteria given in  Theorem \ref{thm-bryl-criteria} from \cite{bryl} to show that $r$  is entangling. Before we proceed, we describe the matrix $r$. From the definition of $r$, $r(e_i\otimes e_j)=e_j\otimes e_i$, for every $1\leq i,j\leq p-1$, $r(e_i\otimes e_p)=e_p\otimes e_{\gamma_p(i)}$, $r(e_p\otimes e_i)=e_{\sigma_p(i)}\otimes e_p$, for  every $1\leq i\leq p-1$, and  the matrix $r$ can be described with a  partition of square blocks of size $p$ as in Equation (\ref{eqn-r-square-free-general}):  
	\begin{equation}\label{eqn-r-square-free-general}
	r=\;\;\;\;\;\left(\begin{array}{c:c:c:c:c|c}
	E_{1,1}
	& E_{2,1}
	&...&E_{p-2,1}& E_{p-1,1} &E_{p,2} \\
	\hdashline
	E_{1,2}
	& E_{2,2}
	&...&E_{p-2,2}& E_{p-1,2} &E_{p,1} \\
	\hdashline
	...
	& ...
	&...& ... &...&... \\
	\hdashline
	E_{1,p-2}	
	& E_{2,p-2}
	&...&E_{p-2,p-2}& E_{p-1,p-2} &E_{p,p-1} \\
	\hdashline
	E_{1,p-1}	
	& E_{2,p-1}
	&...&E_{p-2,p-1}& E_{p-1,p-1} &E_{p,p-2} \\
	\hline
	E_{2,p}	
	& E_{1,p}
	&...&E_{p-1,p}& E_{p-2,p} &E_{p,p} \\
	\end{array}\right)	\end{equation}
	Assume by contradiction that $r=A \otimes B$, where $A,B: \,\mathbb{C}^p \,\rightarrow\,\mathbb{C}^p$. Then, from the equality of the diagonal  square blocks of size $p$ in  $r$ and in $A \otimes B$ (see (\ref{eqn-P-o})):
	\[a_{11}B=E_{1,1}\;\;\;,\;\;\;a_{22}B=E_{2,2},\;\;...\;\;,\;\;\textrm{and }\;\;a_{pp}B=E_{p,p}\]
	But this  is not possible.
	Assume by contradiction that $r=(A \otimes B)P$, where $A,B: \,\mathbb{C}^p \,\rightarrow\,\mathbb{C}^p$ and $P$ is the swap $2$-qudit gate.  The matrices $A \otimes B$ and  $P$, with a  partition of square blocks of size $p$,  are  described in Equation (\ref{eqn-P-o}).
	\begin{equation}\label{eqn-P-o}
	A\otimes B=\;\left(\begin{array}{c:c:c:c}
	
	a_{11}B
	& a_{12}B
	&... &a_{1p}B \\
	\hdashline
	a_{21}B
	& a_{22}B
	&... &a_{2p}B \\
	\hdashline
	...
	& ...
	&... &... \\
	\hdashline
	a_{p1}B
	& a_{p2}B
	&... &a_{pp}B \\
	\end{array}\right) \;\;\; \textrm{and}\;\;\;
	P=\;\left(\begin{array}{c:c:c:c}
	E_{1,1}
	& E_{2,1}
	&...& E_{p,1} \\
	\hdashline
	E_{1,2}
	& E_{2,2}
	&...& E_{p,2} \\
	\hdashline
	...
	& ...
	&... &... \\
	\hdashline
	E_{1,p}	
	& E_{2,p}
	&...& E_{p,p} \\
	\end{array}\right)
	\end{equation}
	From   the equality of the  square blocks of size $p$ at  the first column in $(A \otimes B)P$ and  in  $r$:
	\begin{equation}\label{eqn-syst-square-free}
	\begin{cases}
	a_{11}BE_{1,1}\,+\,a_{12}BE_{1,2}\,+\,...\,+\,a_{1p}BE_{1,p}\;=\;E_{1,1}\\
	a_{21}BE_{1,1}\,+\,a_{22}BE_{1,2}\,+\,...\,+\,a_{2p}BE_{1,p}\;=\;E_{1,2}\\
	...\\
	a_{p-1,1}BE_{1,1}\,+\,a_{p-1,2}BE_{1,2}\,+\,...\,+\,a_{p-1,p}BE_{1,p}\;=\;E_{1,p-1}\\
	a_{p1}BE_{1,1}\,+\,a_{p2}BE_{1,2}\,+\,...\,+\,a_{pp}BE_{1,p}\;=\;E_{2,p}\\
	\end{cases}   
	\end{equation}
	First, note that if $a_{ii}=0$ for some $1\leq i\leq p$, there would be a contradiction to the $i$-th equation in (\ref{eqn-syst-square-free}).
	From the properties of the product of a matrix $ B$ by an elementary matrix, we have, 
	from the first $p-1$  equations in  (\ref{eqn-syst-square-free}),  that the first column of  $B$ is equal to $\frac{1}{a_{11}}(1,0,...,0)^t\,=\,...\,=\frac{1}{a_{p-1,p-1}}(1,0,...,0)^t$, with $a_{11},...,a_{pp}\neq 0$  and all the other coefficients  equal to zero.  But, from the last equation in (\ref{eqn-syst-square-free}), the first column of  $B$ is equal to $\frac{1}{a_{pp}}(0,1,...,0)^t$, a contradiction. So, $r$ cannot be  equal $A \otimes B$, nor $(A \otimes B)P$, for any $A,B: \,\mathbb{C}^p \,\rightarrow\,\mathbb{C}^p$, that is $r$ is an entangling $2$-qudit gate and it  satisfies  the YBE.\\
	$(ii)$ The matrix $s$ is described in Example \ref{ex-square-free-sol-3} for the case $p=3$, and its general form can be deduced easily from it. Taking  $A$ and $B$ to be the permutation matrices of size $p$ corresponding to the permutations  $(1,2,...,p)$ and $(p,p-1,...,1)$, respectively, and $P$ the swap matrix from (\ref{eqn-P-o}), then $s=\,(A\otimes B)\,P$, that is $s$ is a primitive $2$-qudit gate and it   satisfies  the YBE.
\end{proof}
\begin{proof}[Proof of Theorem \ref{thm0}]
	In   \cite{kauf-lo3}, the authors prove that the  $R$-matrix $c$,  from Example \ref{ex1-kaufman}, 	is a (strongly) entangling $2$-qubit gate. So,   from Theorem \ref{theo-tracy-entangling-also-ent}, 
	for any  $2$-qudit gate, $d:\,(\mathbb{C}^m)^{\otimes 2}\,\rightarrow\,(\mathbb{C}^m)^{\otimes 2}$,  that satisfies  the YBE, the Tracy-Singh product of   $c$ with  $d$   is an entangling $2$-qudit gate $c \boxtimes d:\,(\mathbb{C}^{2m})^{\otimes 2}\,\rightarrow\,(\mathbb{C}^{2m})^{\otimes 2}$. Furthermore, from Theorem \ref{thm1}, $c \boxtimes d$ satisfies  the YBE. That is, 
	for every even integer $d\geq 2$,  there exists an entangling  $2$-qudit gate  $R:\,(\mathbb{C}^d)^{\otimes 2}\,\rightarrow\,(\mathbb{C}^d)^{\otimes 2}$, where  $R$ satisfies  the YBE. 
	From Proposition \ref{prop-square-free-3-entangling}, 
	the matrix $r$ corresponding to the square-free st-YBE with $\mid X \mid=p\geq 3$ describes a $2$-qudit gate $r:\,(\mathbb{C}^p)^{\otimes 2}\,\rightarrow\,(\mathbb{C}^p)^{\otimes 2}$ that is entangling. So,  as before, the Tracy-Singh product of   $r$ with  $d$   is an entangling $2$-qudit gate $r\boxtimes d:\,(\mathbb{C}^{pm})^{\otimes 2}\,\rightarrow\,(\mathbb{C}^{pm})^{\otimes 2}$ that satisfies  the YBE. That is, 
	for every integer $d\geq 2$,  there exists an entangling  $2$-qudit gate  $r:\,(\mathbb{C}^d)^{\otimes 2}\,\rightarrow\,(\mathbb{C}^d)^{\otimes 2}$, where  $R$ satisfies  the YBE .\\
	The Tracy-Singh product of  any  $2$-qudit gates $s,s'$  corresponding to the cyclic permutation solutions of prime order $p,p' \geq 2$  is a primitive  $2$-qudit gate. Indeed, from  Proposition \ref{prop-square-free-3-entangling},  both $s$ and $s'$ are primitive and of  the same form  $s\,=\,(A \otimes B)P$, $s'\,=\,(A' \otimes B')P'$, so from Theorem \ref{theo-tracy-entangling-also-ent}$(iii)$, $s \boxtimes s'$ is a primitive $2$-qudit gate and it satisfies the YBE from Theorem  \ref{thm1}. Reiterating this process, one has for every integer $d\geq 2$,  a primitive   $2$-qudit gate  that  satisfies  the YBE .
\end{proof}

\bigskip\bigskip\noindent
{ Fabienne Chouraqui}\\
\smallskip\noindent
University of Haifa at Oranim, Israel.

\smallskip\noindent
E-mail: {\tt fabienne.chouraqui@gmail.com} ;
                {\tt fchoura@sci.haifa.ac.il}
\end{document}